\newtheorem{lemma}{Lemma}
\newtheorem{corollary}{Corollary}
\newtheorem{theorem}{Theorem}
\newtheorem{proposition}{Proposition} 
\newtheorem{remark}{Remark}
\newtheorem{example}{Example}
\newenvironment{proof}{\textbf{Proof.}}{\hfill $\Box$}
\newcommand{\trans}{\mathsf{T}}
\newcommand {\N} {\mathbb{N}}
\newcommand{\di}[1]{\operatorname{d}\!#1}
\newcommand{\p}{\mathbf{P}}
\newcommand {\e} {\varepsilon}
\newcommand {\C} {\mathbb{C}}
\newcommand{\R}{\mathbb{R}}
\journal{}%Journal of Differential Equations}
\begin{document}

\begin{frontmatter}

\title{Uniform and $L^q$-Ensemble Reachability of Parameter-dependent Linear Systems}
\author[First]{ Gunther Dirr and Michael Sch\"onlein}

\address[First]{Institute for Mathematics, University of W\"urzburg, Emil-Fischer Stra\ss e 40, 97074 W\"urzburg, Germany \\
(e-mail: \{dirr,schoenlein\}@mathematik.uni-wuerzburg.de).}

% \title{Elsevier \LaTeX\ template\tnoteref{mytitlenote}}
% \tnotetext[mytitlenote]{Fully documented templates are available in the elsarticle package on \href{http://www.ctan.org/tex-archive/macros/latex/contrib/elsarticle}{CTAN}.}
% 
% %% Group authors per affiliation:
% \author{Elsevier\fnref{myfootnote}}
% \address{Radarweg 29, Amsterdam}
% \fntext[myfootnote]{Since 1880.}
% 
% %% or include affiliations in footnotes:
% \author[mymainaddress,mysecondaryaddress]{Elsevier Inc}
% \ead[url]{www.elsevier.com}
% 
% \author[mysecondaryaddress]{Global Customer Service\corref{mycorrespondingauthor}}
% \cortext[mycorrespondingauthor]{Corresponding author}
% \ead{support@elsevier.com}
% 
% \address[mymainaddress]{1600 John F Kennedy Boulevard, Philadelphia}
% \address[mysecondaryaddress]{360 Park Avenue South, New York}

\begin{abstract}
In this paper, we consider families of linear systems (linear ensembles) defined by matrix pairs 
$\big( A(\theta),B(\theta) \big)$ depending on a parameter $\theta \in \p$ that is varying
over a compact subset $\p$ of the complex plane. In particular, we investigate the following control
task: Find an open-loop control which is {\it independent} of the parameter $\theta \in \p$ and
steers a given family of initial states $x_0(\theta)$ arbitrarily close to a desired family of 
terminal states $f(\theta)$ in finite time. Here, the maps $\theta \mapsto x_0(\theta)$ and
$\theta \mapsto f(\theta)$ are assumed to lie in a common appropriately chosen {Banach space 
$X_n(\p)$ of $\C^n$-valued functions}. If this task is solvable for all initial and terminal states, the pair 
$\big( A(\theta),B(\theta) \big)$ is called {(completely)} ensemble controllable with respect to $X_n(\p)$. 

{Using a well-known infinite-dimensional version of the Kalman rank condition for systems on Banach
  spaces, we derive sufficient conditions for cascade and parallel connections linear ensembles. Moreover, we prove an
  abstract decomposition theorem which results from a spectral splitting of the matrix family $A(\theta)$.
  Based on thses findings as well as approximation theory and cyclicity conditions of multiplications operators, we obtain
  necessary and sufficient conditions for ensemble controllability (reachability) with respect to the Banach spaces of
  continuous functions and $L^q$-functions. In the last section, results on {averaged} controllability (reachability)
  for linear families $\big( A(\theta),B(\theta),C(\theta) \big)$ are presented.}
\end{abstract}

\begin{keyword}
parameter-dependent systems \sep ensemble reachability \sep infinite-dimensional systems \sep 
\MSC[2010] 30E10  \sep 47A16  \sep 93B05
\end{keyword}

\end{frontmatter}

%%%%%%%%%%%%%%%%%%%%%%%%%%%%%%%%%%%%%%%%%%%%%%%%%%%%%%%%%%%%%%%%%%%%%%%%%%%%
\section{Introduction}\label{intro}
%%%%%%%%%%%%%%%%%%%%%%%%%%%%%%%%%%%%%%%%%%%%%%%%%%%%%%%%%%%%%%%%%%%%%%%%%%%%

In recent years the task of controlling a large, potentially infinite, number of states or systems
at once using only a single open-loop input or a single feedback controller has posed a challenge
in mathematical systems and control theory. Nowadays the term {\it ensemble control} has been established
to refer to this {area of research, cf.~\cite[Section~2.4]{brockett2012notes}, but
  {\it simultaneous control} or {\it control of families of systems} are also common, see
\cite{buscain_2012,ghosh2000sufficient,hautussontag1986,loheac:hal-01164525,sontagwang1990}. Closely related topics
  are robust control \cite{amato2006} and the blending problem as considered in \cite{Tannenbaum1981invariance}.}

% problem area, cf.~\cite[Section~2.4]{brockett2012notes}. This is related to robust
% control \cite{amato2006} and the {\it blending problem} as considered in \cite{Tannenbaum1981invariance}.
% {See also \cite{ghosh2000sufficient,hautussontag1986,sontagwang1990} and the reference therein.}

Of course, there are a lot{s} different scenarios which require to control a large or even an infinite
number of states: (i) First, think of a system which is composed by a tremendous number of 
subsystems, like a flock of birds or a swarm of bees \cite{brockett2010control}. (ii) An other reason 
for a huge state space could result from uncertainties in the initial data. For instance, if only 
a probability distribution of the initial states is known then the ensemble control problem leads
to a transport problem in terms of density functions and therefore to controllability and observability
issues of the Liouville and the Fokker-Plank equation
\cite{brockett2012notes,chen2017optimal,fleig2016estimates,shen2017discrete,zeng2016ensemble}.
(iii) A third setting arises again from uncertainties but now with respect to the model parameters.
In this case, the system depends on parameters and the goal is to achieve a control task by using 
only a single or a few open-loop inputs which are {\it independent} of the (usually unkonwn) 
model parameters \cite{li2009ensemble}. In this paper, we focus on linear systems which arise from 
scenario (iii).

For recent contributions to the linear ensemble control problem we refer to, e.g.~\cite{li2011} and
\cite{li_tac_2016}. For nonlinear parameter-dependent systems results have been obtained in
\cite{Beauchard-Coron-Rouchon_2010,li2009ensemble} and \cite{Agrachev_ensemble_lie_2016}. 
% where also moment control for parameter-dependent system is introduced. 
A monemt-based approach to ensemble control, in particular for linear systems, is considered in 
\cite{loheac:hal-01164525} and \cite{Zeng_scl_2016}. Another aspect within this context is to steer the 
average of the ensemble states towards a desired terminal value, cf.~\cite{Zuazua_jde_2017,zuazua2014averaged}. 
{This will be discussed in Section~\ref{sec:output}}. We also point out the work \cite{Gauthier_scl_2018}
that investigates asymptotic ensemble stabilizability. Moreover, we notice that all of the contributions
mentioned above treat continuous-time systems.

\bigskip
\noindent
In the sequel, we introduce our precise model which consists of a family of parameter-dependent linear
control systems 
\begin{align}\label{eq:sys-par} 
\frac{\partial x}{\partial t}
(t,\theta)=A(\theta)x(t,\theta)+B(\theta)u(t), \quad x(0,\theta)=x_0(\theta)\in\C^n\,.
\end{align}
{While the matrix $A(\theta)\in\mathbb{C}^{n\times n}$ is assumed to depend continuously on
the parameter $\theta \in \p$, which varies over a nonempty compact set $\p \subset \C$, the regularity of 
the input matrix $B(\theta)\in \mathbb{C}^{n\times m}$ is subject to the particular case under consideration
and will  be specified later. Controls $u(t)$, however, are independent of $\theta$ and complex-valued
throughout the paper. The latter assumption allows complex state spaces and thus simplifies later spectral
analysis. Nevertheles, the ``real case'' is covered by this approach as well, cf.~Lemma \ref{lem:complexification}.}
Moreover, we emphasize that the results of this paper also apply to discrete-time systems 
\begin{align}\label{eq:sys-par-discrete} 
x(k+1,\theta)=A(\theta)x(k,\theta)+B(\theta)u(k), \quad x(0,\theta)=x_0(\theta) \in\C^n.
\end{align}
For $u\in L^1([0,T],\mathbb{C}^m)$ or $u{=(u(0),...,u(T-1) ) \in  \C^m \times \cdots \times \C^m }$,
let $\varphi\big(T,u,x_0(\theta),\theta\big)$ denote the solution to \eqref{eq:sys-par} or \eqref{eq:sys-par-discrete},
i.e.
\begin{equation*}
\varphi\big(T,u,x_0(\theta),\theta\big) = {\rm e}^{T A(\theta)}x_0(\theta) + 
\int_0^T {\rm e}^{(T-\tau)A(\theta)}B(\theta)u(\tau)\di{\tau}
\end{equation*}
or
\begin{equation*}
\varphi\big(T,u,x_0(\theta),\theta\big) = \big(A(\theta)\big)^T x_0(\theta) + 
\sum_{k=0}^{T-1} \big(A(\theta)\big)^{T-1-k}B(\theta)u(k)\,.
\end{equation*}
To handle the continuous and discrete-time case at the same time we will denote for $T\geq 0$ the set of inputs
by $U(T)$. That is, in continuous-time one has $U(T):=L^1([0,T],\mathbb{C}^m)$ and in discrete-time
$U(T):=\C^m \times \cdots \times \C^m$. From now on $T\geq 0$ is either a nonnegative real or a natural number
depending on the system under consideration.

The central problem can be sketched as follows: Given a family of initial states $x_0(\theta)$ and
a family of terminal states $f(\theta)$. {Find} { $T\geq 0$ and $u \in U(T)$} such that  
\begin{align*}
\varphi(T,u,x_0(\theta),\theta) = f(\theta) \qquad \text{ for all } \, \theta \in \p.
\end{align*}
{In linear control theory, this property is usually called \textit{(complete) controllability},
cf.~e.g.~\cite[Section~3.2]{trentel_book_2001}. Note that for finite $\p =\{\theta_1,....,\theta_N\}$ the ensemble
control problem reduces to the classical problem of controlling a parallel connection of finitely many linear
systems. In this case the state space remains finite-dimensional and the situation is well-understood,
cf.~\cite{fuhrmann2015mathematics}. Therefore, we assume here and henceforth that $\p$ is infinite.}
{The key point in ensemble control} is---as mentioned above---that the input $u$ has to be
independent of the system parameter $\theta \in \p$. Without this crucial requirement the controllability
analysis of \eqref{eq:sys-par} and \eqref{eq:sys-par-discrete} would be much simpler, cf.~\cite{curtain_tac_2015}.
{But due to its special structure this problem is so far not covered by} standard
textbooks on infinite dimensional linear {systems} such as \cite{curtainzwart95,fuhrmann_hilbert_book}.
{All what is known from infinite dimensional systems theory is} that for infinite parameter spaces
$\p$ both equations \eqref{eq:sys-par} and \eqref{eq:sys-par-discrete}, are never (completely) controllable, 
cf. \cite[Theorem~3.1.1]{triggiani75} and \cite[p.~244]{fuhrmann_hilbert_book}. Hence, only weaker notions
of controllability are reasonable {and thus} we will focus on an approximate version of the above 
controllability concept.

For making this statement precise we have to fix some notation that will be used {in the
sequel. Let $X(\p)$ denote an arbitrary separable Banach spaces of functions defined on $\p$ with
values in $\C$ and let $X_{n,m}(\p)$ consist of all $(n \times m)$-matrices with entries in $X(\p)$.
Furthermore, set $X_n(\p) :=  X_{n,1}(\p)$. Thus $X_n(\p)$ is simply the $n$-fold cartesian product of
$X(\p)$ and therefore again a Banach space\footnote{In general, we assume that $X_n(\p)$ is equipped
with the maximum norm, i.e.~$\|x\|_{X_n(\p)} := \max_{1 \leq i \leq n}\|x_i\|_{X(\p)}$. However, for
the special case $X_n(\p) := L^q_n(\p)$ the corresponding {$L^q$-construction lends}
itself as better choice.}. In particular, the corresponding spaces of continuous and
$L^q$-functions will be denoted by $C_n(\p)$, $C_{n,m}(\p)$ and $L^q_n(\p)$, $L^q_{n,m}(\p)$, respectively.
Moreover, for fixed $A \in C_{n,n}(\p)$ we always assume that the induced multiplication
operator ${\cal M}_A: X_n(\p) \to X_n(\p)$ given by
\begin{align}
\label{eq:def-operators-scalar}
{\cal M}_Af(\theta) := A(\theta)f(\theta)
\end{align}
is well-defined and bounded. If this holds for all $A \in C_{n,n}(\p)$ with
\begin{align}
\label{eq:def-operator-norm-scalar}
  \|{\cal M}_A\| \leq \kappa \, \max_{\theta \in \p \atop 1 \leq i,j \leq n} |a_{ij}(\theta)|\, ,
\end{align}
for some constant $\kappa > 0$ independent of $A$ then $X_n(\p)$ or actually $X(\p)$ is called a
{\it multiplication space} (or for short M-space). Standard M-spaces are for instance $C(\p)$ and
$L^q(\p)$ (with $\kappa =1$ for $n=1$). Important examples of non-M-spaces are provided by Hardy
spaces $H^p(\mathbb{D})$ or the disc algebra $\mathcal{A}(\mathbb{D})$ (cf.~Example \ref{example:non-sep}).
Finally, for any $B \in X_{n,m}(\p)$ the input operator ${\cal M}_B:\C^m \to X_n(\p)$,
${\cal M}_Bv(\theta) = B(\theta)v$ is also well-defined and bounded as its domain $\C^m$ is finite dimensional.} 

By means of the above definitions, \eqref{eq:sys-par} and \eqref{eq:sys-par-discrete}
are equivalent to the (infinite dimensional) linear control systems
\begin{align}
\begin{split}
\label{eq:sys-inf}
\dot x (t)&={\cal M}_Ax(t) + {\cal M}_Bu(t),\quad x(0) =x_0 \in X_n(\p)
% \\
% y(t)&= {\mathcal C}(t) x(t).
\end{split}
\end{align}
and correspondingly
\begin{align}
\begin{split}
\label{eq:sys-inf-discrete}
x({k}+1)&={\cal M}_Ax({k}) + {\cal M}_Bu({k}),\quad x(0) =x_0 \in X_n(\p)\,.
% \\
% y(t)&= {\mathcal C}(t) x(t).
\end{split}
\end{align}
Here and henceforth, we assume that {$X(\p)$ is an infinite dimensional M-Banach space and 
introduce the notation $\varphi(T,u,x_0)(\theta):= \varphi(T,u,x_0(\theta),\theta)$ to express that 
solutions of \eqref{eq:sys-par} and \eqref{eq:sys-par-discrete} are regarded as} functions of $\theta \in \p$.
Moreover, we call
\begin{align*}
R(A,B) := \left\{ \varphi(t,u,0) \, |\, \,t\geq 0, \, \,  u \in{U(t)} \, \right\}
\end{align*}
the reachability set (from zero).

Now the central notions of this paper read as follows: A pair $(A,B) \in C_{n,n}(\p)\times X_{n,m}(\p)$
is called {\it ensemble reachable (from zero)} with respect to $X_n(\p)$, if for all $f \in X_n(\p)$
and $\e >0$ there {exist} $T \geq 0$ and an input $u \in {U(T)}$ such that
%\marginpar{exist/exits - serach/replace}
\begin{align}\label{eq:openloop}
\|\varphi(T,u,0) -f\|_{X_n(\p)} < \e.
\end{align}
Similarly, a pair $(A,B) \in C_{n,n}(\p)\times X_{n,m}(\p)$  is called {\it ensemble controllable (to zero)}
with respect to $X_n(\p)$, if for all $x_0 \in X_n(\p)$ and $\e >0$ there {exist} $T \geq 0$
and an input $u \in {U(T)} $ such that
\begin{align}\label{eq:openloop_to_zero}
\|\varphi(T,u,x_0)\|_{X_n(\p)} < \e.
\end{align}
With the above identification {in hands}, ensemble reachability (from zero) / ensemble controllability 
(to zero) of $(A,B) \in C_{n,n}(\p)\times X_{n,m}(\p)$ on $X_n(\p)$ is equivalent to \textit{approximate
reachablility} (from zero) / \textit{approximate controllability} (to zero) of the infinite-dimensional
linear system $({\cal M}_A,{\cal M}_B)$ on $X_n(\p)$. Moreover, for continuous-time linear systems
\begin{align}
\begin{split}
\label{eq:sys-inf-general}
\dot x (t)&={\cal A}x(t) + {\cal B}u(t)\,,
% \\
% y(t)&= {\mathcal C}(t) x(t).
\end{split}
\end{align}
{where ${\cal A}$ and ${\cal B}$ are bounded operators on Banach spaces, one has the
well-known identity
\begin{align}\label{eq:R_T}
{\overline {R(A,B)} = \overline {R_T(A,B)} :=\overline {\left\{ \varphi(T,u,0) \, |\, \, u\in  U(T) \right\}}\qquad \text{for every }T>0}.
\end{align}
as well as the equivalence of the following assertions \cite[Theorem~3.1.1, Remark~3.1.2]{triggiani75}:
\begin{enumerate}
\item[(a)]
System \eqref{eq:sys-inf-general} is approximately reachable (from zero).
\item[(b)]
System \eqref{eq:sys-inf-general} is approximately controllable (to zero).
\item[(c)]
System \eqref{eq:sys-inf-general} is approximately completely controllable, i.e.
for every $\e>0$ and for every pair $x_0,x_1$ there are $T\geq0$ and $u \in U(T)$ such that
\begin{align*}
 \| x_1 - \varphi(T,u,x_0)\|_{X_n(\p)} < \e. 
\end{align*}
\item[(d)]
There exists $T \geq 0$ such that system \eqref{eq:sys-inf-general} is approximately
completely controllable on $[0,T]$, i.e. for every $\e>0$ and for every pair $x_0,x_1$ there exists $u \in U(T)$ such that
\begin{align*}
 \| x_1 - \varphi(T,u,x_0)\|_{X_n(\p)} < \e. 
\end{align*}
\item[(e)]
For all $T > 0$, system \eqref{eq:sys-inf-general} is approximately completely controllable on $[0,T]$.
\end{enumerate}
However, in the discrete-time case Eq.~\eqref{eq:R_T} fails in general even for arbitrary large $T$.
  Moreover, the notion of approximate reachability (from zero) and approximate controllability (to zero) are independent
  of each other, cf.~\cite[Lemma~4.1]{fuhrmann1972weak} and none of both implies approximate (complete) controllability.
This hinders us to treat the continuous-time and discrete-time case fully simultaneously.
But in both cases reachability (from zero) is characterised by Eq.~\eqref{eq:density_cond_er} and thus
  can be studied along  the same lines. Therefore, we focus on reachability (from zero) and drop the addition ``from zero''
  from now on.
% In terms of the reachable set ensemble reachability is equivalently expressed by the condition $\overline{R(A,B)} = X_n(\p)$.
  In the Sections~\ref{sec:unif} and~\ref{sec:L_q}, we pay special attention to the Banach spaces of continuous functions $C_n(\p)$
  and integrable functions $L^q_n(\p)$. Hence, there we use the terms {\it uniform ensemble reachability} and {\it $L^q$-ensemble
    reachability}, respectively, to explicitly express the underlying function spaces. Finally, we point out that for continuous-time
  systems the concept of {\it approximate simultaneous controllability} as defined in \cite{loheac:hal-01164525} is equivalent
  to approximate complete controllability and therefore to approximate reachability, whenever ${\cal A}$ and ${\cal B}$
  are bounded operators acting on some Banach spaces.}

\subsubsection*{{Main contributions}} %\marginpar{Abstarct = Summary -- compare}
%%%%%%%%%%%%%%%%%%%%%%%%%%%%%%%%%%%%%%%%%%%%%%%%%%%%%%%%%%%%%%%%%%%%%%%%%%%%

{In Section~\ref{sec:prelim}, we provide sufficient condition for the general ensemble reachability
  problem. Our findings can be devided into two catagories: (i) structural results for cascade and parallel connections
  and (ii) a decompostion method based on a spectral splitting of the parameter dependent matrix family $A(\theta)$.
  Results of the first category exploit the particular structure of the matrix pair $(A(\theta),B(\theta))$ to reduced
  the reachability problem to several smaller and easier to solve problems: in the cascade scenario it is obviously
  sufficient to analyze the individual subsystems on the diagonal (cf.~Proposition \ref{prop:technical_MATCOM}); the
  parallel connection is more difficult to handle and requires an additional subtle topological condition (cf.~Theorem
  \ref{thm:parallel_multi}). The main contribuation of the second category (cf.~Theorem~\ref{thm:decomposition}) is based
  on a suitable spectral decomposition of the matrix-multiplication operator induced by $A(\theta)$ which allows to
  transform a given pair $(A(\theta),B(\theta))$ into a block-diagonal form such that the reachability analysis reduces
  again to a block diagonal structure. Both results of catagoriy (i) are in principle valid for general linear systems
  on Banach spaces.}

{In Section \ref{sec:unif} and \ref{sec:L_q}, the above described methods are applied to derive criteria
  for uniform ensemble reachability and $L^q$-ensemble reachability, respectively. More precisely, we provide complete
  characterizations for scalar pairs $(a(\theta),b(\theta))$ to be uniformly and $L^q$-ensemble reachable, cf.~Propositions
  \ref{prop:scalar} and \ref{thm:Lq_scalar}, respectively, and obtain necessary as well as sufficient conditions uniform
  ensemble reachability (Theorems~\ref{thm:suff-charakteristic},~\ref{thm:CC-SI-sufficient-conditions} and~\ref{thm:CC-MI-sufficient-conditions}) and for $L^q$-ensemble reachability (Theorem~\ref{thm:Lq_general} and Proposition~\ref{prop:Lq-MI-sufficient-conditions}).}
Moreover, we consider averaged reachability and apply our obtained results to get {pointwise testable} sufficient conditions for a triple $(A,B,C)$ to be averaged reachable. This is presented in Section~\ref{sec:output}.
\subsubsection*{Further Notation and Definitions}
%%%%%%%%%%%%%%%%%%%%%%%%%%%%%%%%%%%%%%%%%%%%%%%%%%%%%%%%%%%%%%%%%%%%%%%%%%%%

For a matrix $A\in \C^{n\times m}$, we denote the complex conjugate by $A^\dagger:=\overline{A}^\trans$ 
and its kernel by $\operatorname{ker} A$. For $\Omega \subset \C$ we say that $\Omega$  does not
{\it separate the plane} if $\C\setminus \Omega$ is connected. 
{Furthermore, let $\Omega^\circ$ denote the {\it interior} of $\Omega$ and
 $\overline{\Omega}$ its {\it closure}. We say that a set $C$ is {\it properly contained}
in $\Omega$ if $C \subset \Omega^\circ$ holds}. A compact connected set in the complex plane
containing more than one point is called a {\it continuum}. A set $\Omega$ is {\it locally connected}
if for every $\omega \in \Omega$ and each neighborhood $U{\subset \Omega}$ of $\omega$
there exists a connected neighborhood $V{\subset \Omega}$ of $\omega$ that is contained in $U$.
A set $\Omega$ is called {\it contractible} if the identity map
on $\Omega$ is homotopic to a constant mapping, i.e.~for some $p \in \Omega$ there is a continuous map
$F\colon[0,1]\times \Omega \to \Omega$ such that $F(0,\omega)=\omega$ and $F(1,\omega)=p$ for all 
$\omega \in \Omega$. A $C^k$-{\it path} is a $k$-times continuously differentiable map of a compact
interval into $\C$ and a {\it Jordan curve} {is a homeomorphic image (within $\C$)
 of the unit circle $\partial \mathbb{D}$}. For simplicity, a $C^0$-path will be called simply a {\it path}.
{Morover, $\operatorname{card} \Omega$ stands for the cardinality of $\Omega$ and
  $F\colon U \rightsquigarrow V$ denotes a {\it set-valued map} from $U$ to the power set of $V$. As usual
  for set-valued maps, the image of $F$ is defined by  $F(U) := \bigcup_{x \in U}F(x) \subset V$.}

%%%%%%%%%%%%%%%%%%%%%%%%%%%%%%%%%%%%%%%%%%%%%%%%%%%%%%%%%%%%%%%%%%%%%%%%%%%%%%%%
\section{Structural results}\label{sec:prelim}
%%%%%%%%%%%%%%%%%%%%%%%%%%%%%%%%%%%%%%%%%%%%%%%%%%%%%%%%%%%%%%%%%%%%%%%%%%%%%%%%

We shortly recap some relevant results on approximate reachability.
{Obviously, in terms of the reachable set approximate reachability holds if and only if
$\overline{R(A,B)} = X_n(\p)$. In \cite[Theorem~3.1.1]{triggiani75} it is shown that
approximate reachability of \eqref{eq:sys-inf-general} %\eqref{eq:sys-inf} on $[0,T]$
is equivalent to the density condition
\begin{align}\label{eq:density_cond_er}
\overline{\sum_{k\in \mathbb{N}_0}\operatorname{im} \mathcal{A}^k\mathcal{B}} = X_n(\p) .
\end{align}
% \begin{align}\label{eq:density_cond_er}
% \overline{\sum_{k\in \mathbb{N}_0}\operatorname{im} \mathcal{M}_A^k\mathcal{M}_B}={X_n(\p)}.
% \end{align}
Taking into account that $\mathcal{A} := \mathcal{M}_A$ and $\mathcal{B} := \mathcal{M}_B$ are multiplication
operators, % namely that ${\cal M}_B$ has finite dimensional range, 
the latter density condition can be written as follows. Let $b_1(\theta), \ldots,b_m(\theta)$ denote
the columns of $B(\theta)$ and let $A^kb_j$ shortly denote the continuous functions 
$\theta \mapsto A(\theta)^kb_j(\theta)$ for {  $k=0,1,2,...$ and $ j=1,...,m$.} Then, a pair 
$(A,B) \in C_{n,n}(\p) \times X_{n,m}(\p)$ is ensemble reachable on $X_n(\p)$ if and only if the set 
\[
{L(A,B)}:=\operatorname{span}\{A^kb_j \;|\; 1 \leq j \leq m,\,\, k\in \mathbb{N}_0\}
\]
is dense in $X_n(\p)$.
% with respect to $\|\cdot\|_{X_n(\p)}$. 
Note that discrete-time systems are not considered in \cite{triggiani75}, but condition \eqref{eq:density_cond_er}
as well as the latter equivalence also hold for discrete-time parameter-dependent systems, cf.~\cite[Theorem~1]{schonlein2016controllability}.} 
%
% In \cite[Theorem~3.1.1]{triggiani75} it is shown that approximate reachability of
% \eqref{eq:sys-inf} on $[0,T]$ is equivalent to the density condition 
% \begin{align}\label{eq:density_cond_er}
% \overline{\sum_{k\in \mathbb{N}_0}\operatorname{im} \mathcal{M}_A^k\mathcal{M}_B}={X_n(\p)}.
% \end{align}
% Taking into account that $\mathcal{M}_A$ and $\mathcal{M}_B$ are multiplication operators,
% % namely that ${\cal M}_B$ has finite dimensional range, 
% the latter density condition can be written as follows. Let $b_1(\theta), \ldots,b_m(\theta)$ denote
% the columns of $B(\theta)$ and let $A^kb_j$ shortly denote the continuous functions 
% $\theta \mapsto A(\theta)^kb_j(\theta)$ for {  $k=0,1,2,...$ and $ j=1,...,m$.} Then, a pair 
% $(A,B) \in C_{n,n}(\p) \times X_{n,m}(\p)$ is ensemble reachable on $X_n(\p)$ if and only if the set 
% \[
% {L(A,B)}:=\operatorname{span}\{A^kb_j \;|\; 1 \leq j \leq m,\,\, k\in \mathbb{N}_0\}
% \]
% is dense in $X_n(\p)$.
% % with respect to $\|\cdot\|_{X_n(\p)}$. 
% Note that discrete-time systems are not considered in \cite{triggiani75}, but the latter equivalence
% also holds for discrete-time parameter-dependent systems, cf. \cite[Theorem~1]{schonlein2016controllability}. 
%
Moreover, we emphasize that ${L(A,B)}$ is dense in $X_n(\p)$ if and only if for each $\e>0$ and
each $f \in X_n(\p)$ there exist {complex polynomials} $p_1, \ldots, p_m$ such that 
\begin{align}\label{eq:con-ensemble}
 \left\| \sum_{j=1}^{m}p_j(A)\,b_j - f \right\|_{X_n(\p)} < \e.
\end{align}
The latter condition links {ensemble reachability to polynomial approximations as well as to}
the notion of cyclicity of the multiplication operators ${\cal M}_A$. More precisely, a bounded linear
operator $T$ defined on a separable Banach space $X$ is called $l$-multicyclic if there is an $l$-tuple
$(x_1,...,x_l) \in X^l$ such that the closure of 
\begin{align*}
  {
 \left\{ \sum_{k=1}^{l} p_k(T)x_k \, \, \bigg|\, \, p_1, \dots, p_k \text{ are {complex polynomials}} \right\}}
\end{align*}
% \begin{align*}
%  \left\{ \sum_{k=1}^{{l}} p_k(T)x_k \, \, \bigg|\, \, p_k \text{    runs through all polynomials for } k=1,...,{l}\right\}
% \end{align*}
coincides with $X$ and $l$ is {minimal in terms of this property}, cf.~for instance 
\cite{herrero1985multicyclic}. 
That is, $(A,B)$ is ensemble reachable if and only if $\operatorname{im} \mathcal{M}_B$ is a cyclic subspace of
$\operatorname{im} \mathcal{M}_A$, cf. \cite{Herrero_McDonald_1983,Nikolskii1982}. Moreover, if $(A,B)$ is ensemble
reachable then the matrix multiplication operator ${\cal M}_A$ is $l$-multicyclic with $l \leq m$.

% \marginpar{Ich wuerde gerne einheitlich reachability schreiben, habe aber die Literatur nicht zur Hand
%   und fuer unbounded operators muss man zur Sicherheit nachschauen; eventuell Fussnote machen}
%
{\begin{remark}
    Another quite general characterizations for approximate reachability/controllability is given by \cite[Theorem~4.1.7~(b)]{curtainzwart95}
    which is however often hard to check. In special cases, where additional spectral information on the operator ${\cal A}$ is available,
    more explicit condition can be derived. A standard assumption of this type is the existence of a Riesz basis of eigenvectors of the
    operator ${\cal A}$, cf.~\cite[Section~4.2]{curtainzwart95} and \cite{Jacob2006diagonal}. But except for some trivial cases where,
    e.g., $A(\theta)$ has constant eigenvalues, the multiplication operator ${\cal M}_A$ induced by $A(\theta)$ does not have a point
    spectrum and therefore these results are in general not applicable.\\  
    In \cite{loheac:hal-01164525}, ensemble reachability ($=$ approximate simultaneous controllability) for continuous-time
      systems is linked to an averaged reachablity problem\footnote{A precise definition of {\it averaged reachability} is given in Section
      \ref{sec:output}.} via a constrained optimal control problem including a $L^2$-penalty term. The authors show that by increasing
    the penalty parameter and solving the corresponding optimal control problem one gains a sequence of controls which drive the system
    (approximately) to the desired target state.
    % contained in the closure of the reachable set $R(A,B)$.}
  However, the result does not provide any explicit conditions for ensemble reachability in terms of the matrix families $A(\theta)$
  and $B(\theta)$.
  \end{remark}}

We start off our further investigations with two auxiliary results. The first is devoted to the fact that
previous works on ensemble reachability were often limited to pairs of real matrix families $(A(\theta),B(\theta))$.
The second characterizes how ensemble reachability behaves under restrictions. {Let $X^{\R}(\p)$
be a separable M-Banach space of real-valued functions. Then $X^{\C}(\p):=\{ g+ih\, |\, g,h \in X^{\R}(\p)\}$
denotes its complexification equipped with the norm
$\|g+ih\|_{{X^{\C}(\p)}} := \max_{t \in [0,2\pi]} \|g\cos(t) - h\sin(t)\|_{{X^{\R}(\p)}}$.}
For details we refer to \cite{munoz_studia_1999}.
\begin{lemma}\label{lem:complexification}%\marginpar{{Renamed: prop to lem}}
Let $(A,B)$ be a real, i.e. $(A,B) \in C^{\R}_{n,n}(\p) \times X^{\R}_{n,m}(\p)$. Then $(A,B)$ is ensemble
reachable on $X^{\C}_n(\p)$ if and only if $(A,B)$ is ensemble reachable on $X^{\R}_n(\p)$,
{i.e. if for all $f \in X_n(\p)$ and $\e >0$ there {exist} $T \geq 0$ and a $\mathbb{R}^m$-valued
input $u \in {U(T)}$ such that
\begin{align}\label{eq:openloop}
\|\varphi(T,u,0) -f\|_{X_n(\p)} < \e.
\end{align}}
\end{lemma}
\begin{proof}
  {Obviously, condition \eqref{eq:density_cond_er} applies to the real case as well and yields
    the following equivalent characterization of ensemble reachability on $X^{\R}_n(\p)$: For each $\e>0$ and for
    each $f\in X^{\R}_n(\p)$ there are real polynomials $p_1,...,p_m$  such that
\begin{align*}
   \Big\| \sum_{j=1}^{m}p_j(A)\,b_j - f \Big\|_{X^{\R}_n(\p)} < \e.
\end{align*}}
{For simplicity, let $m=1$ in the sequel. First, suppose that $(A,B)$ is ensemble reachable on
  $X^{\C}_n(\p)$ and let $\e>0$ and $f\in X^{\R}_n(\p)$.} Then, there is a complex polynomial $p(z)=c_0+c_1z+\cdots +c_k \,z^k$ such that 
\begin{align*}
\| p(A)\,b - f \|_{X^{\C}_n(\p)} < \e %\tfrac{\e}{2}.
\end{align*}
In particular, for
$r(z) = \operatorname{Re}(c_0)+\operatorname{Re}(c_1) \, z +\cdots +\operatorname{Re}(c_k) \,z^k$ and
$q(z) = \operatorname{Im}(c_0)+\operatorname{Im}(c_1) \, z+\cdots +\operatorname{Im}(c_k) \,z^k$ it follows
from \cite[Proposition~1]{munoz_studia_1999}
\begin{align*}
\| r(A)\,b - f \|_{X^{\R}_n(\p)}  \leq  \| r(A)\,b - f  + i q(A)b\|_{X^{\C}_n(\p)}
= \| p(A)\,b - f \|_{X^{\C}_n(\p)}<\e\,. % \tfrac{\e}{2}.
\end{align*}
% which shows the claim.

Conversely, let $\e>0$ and $f=g+ih \in X^{\C}_n(\p)$. By assumption, there are real polynomials $r$ and $q$ such that
\begin{align*}
 \| r(A)\,b - g \|_{X^{\R}_n(\p)} < \tfrac{\e}{2} \quad \text{ and } \quad \| q(A)\,b - h \|_{X^{\R}_n(\p)} < \tfrac{\e}{2}.
\end{align*}
Thus, defining $p(z):= r(z)+iq(z)$ we have
\begin{align*}
 \| p(A)\,b - f \|_{X^{\C}_n(\p)} < \e.
\end{align*}
This shows the assertion.
\end{proof}

\medskip

Next, as mentioned before, we {treat} restrictions of parameter-dependent systems to subsets of
the parameter space.
For $\p_1 \subset \p_2$ we say that a pair $\big(X(\p_2),X(\p_1)\big)$ has the {\it restriction property} if the
restriction operator ${\cal R} \colon X(\p_2) \to X(\p_1)$, ${f \mapsto {\cal R}f := f\vert_{\p_1}}$
is well-defined, bounded and onto. This yields the following result.
{
\begin{lemma}\label{lem:resriction-property}
  Let $\p_1, \p_2$ be compact with $\p_1 \subset \p_2$. If $\big(X(\p_1),X(\p_2)\big)$ has the
  restriction property and $(A,B) \in C_{n,n}(\p_2) \times X_{n,m}(\p_2)$ is ensemble reachable on $X_n(\p_2)$
  then $({\cal R}A, {\cal R}B) \in C_{n,n}(\p_1) \times X_{n,m}(\p_1)$ is ensemble reachable on $X_n(\p_1)$.
\end{lemma}}

\begin{proof}
Let $\e>0$ and $f \in X_n(\p_1)$.   By assumption, the restriction operator ${\cal R}$
is onto, i.e.~there exists $\tilde f \in X_n(\p_2)$ with ${\cal R}\tilde f = f$. Moreover, ensemble 
reachability of $(A,B)$ on $X_n(\p_2)$ implies the existence of polynomials $\tilde p_j$ such that
$\Big\| \sum_{j=1}^m \tilde p_j(A)b_j - \tilde f \Big\|_{X_{n}(\p_2)} < \e$ holds. Hence, one has the 
estimate
\begin{align*}
\Big\| \sum_{j=1}^m \tilde p_j({\cal R} A){\cal R} b_j - f \Big\|_{X_{n}(\p_1)}
=\Big\| {\cal R}\Big(\sum_{j=1}^m \tilde p_j(A)b_j - \tilde f\Big) \Big\|_{X_{n}(\p_1)}
\leq \eta \Big\|  \sum_{j=1}^m \tilde p_j(A)b_j - \tilde f \Big\|_{X_{n}(\p_2)} < \eta \e,
\end{align*}
where $\eta$ denotes the operator norm of $\mathcal{R}$, and thus ensemble reachability of $({\cal R}A , {\cal R}B)$ follows.
\end{proof}

{
  \begin{remark}\label{rem:restriction-property}
    Two standard scenarios where the restriction property is satisfied are $C$- and $L^q$-spaces. More precisely,
    one has:
\begin{enumerate}
\item[(a)] If $\p_2$ is compact and $\p_1 \subset \p_2$ {is} closed, then Tietze's extensions theorem
  \cite[{Theorem}~20.4]{rudin1987realcomplex} implies that the pair $\big(C(\p_2),C(\p_1)\big)$
  has the restriction property.
\item[(b)]
  If $(\p_2,\mu)$ is a measure space and $\p_1 \subset \p_2$ a measurable subset then the pairs
  $\big(L^q(\p_2),L^q(\p_1)\big)$ have obviously the restriction property for $1 \leq q \leq \infty$. 
\end{enumerate}
\end{remark}}

\medskip

%%%%%%%%%%%%%%%%%%%%%%%%%%%%%%%%%%%%%%%%%%%%%%%%%%%%%%%%%%%%%%%%%%%%%%%%%%%%%%%%%
\subsection{Cascade structures}%\marginpar{{New subsection!}}
\label{subsec:cascade-structures}

{Here, we consider probably the most simplest case of an interconnected system{:}  a
 {\emph{cascade}.} Let $(A_{ij},B_{ij})\in C_{n_i,n_j}(\textbf{P})\times X_{n_i,m_j} (\p)$
with $1\leq i\leq j\leq N$ a finite collection of linear parameter-dependent systems and set 
$\overline{n}:={n_1+ \cdots + n_N}$ and $\overline{m}:={m_1+\cdots +m_N}$. Moreover, define the associated
upper triangular parameter-dependent pairs by}
\begin{equation}\label{eq:uppertriang}
 A := 
\begin{pmatrix}
 A_{11}  & \cdots &A_{1N}\\
 & \ddots &\vdots\\
 0& & A_{NN} 
\end{pmatrix}
\in C_{\overline{n},\overline{n}}  (\mathbf{P})
,\quad 
B : = 
\begin{pmatrix}
 B_{11}  & \cdots &B_{1N}\\
 & \ddots &\vdots\\
 0& & B_{NN} 
\end{pmatrix}
\in X_{\overline{n},\overline{m}}(\mathbf{P}).
\end{equation}

This upper triangular structure {guarantees} a straightforward sufficient reachability condition.
We note that this result is not limited to ensembles, indeed it is a consequence of the cascade structure
(cf.~\cite{sontag}) and holds for any linear system. {Since the result will be used later in relation
to Theorem~\ref{thm:CC-MI-sufficient-conditions}  as well as Propositions~\ref{prop:A_B_triangular} and~\ref{prop:Lq-MI-sufficient-conditions} we provide a sketch of its straightforward proof}.
%\marginpar{Check reference!}

\begin{proposition}\label{prop:technical_MATCOM}
If the diagonal pairs $(A_{ii}, B_{ii})$ of the upper triangular pair $(A,B)$ given by \eqref{eq:uppertriang}
are ensemble reachable on $X_{n_i}(\p)$ for all $i=1, \ldots,N$ then
 {$(A,B)$} is ensemble reachable on $X_{n_1}(\p) \times \cdots \times X_{n_N}(\p)$.
\end{proposition}
\begin{proof}
  Suppose the diagonal pairs $(A_{ii},B_{ii})$ are ensemble reachable on $X_{n_i}(\p)$ for all $i=1,...,N$. We start
  with the discrete-time {case and assume $N=2$ for simplicity. This fully captures the key idea.
    The general case can be treated by induction.} Thus, we consider the discrete-time system
\begin{equation}\label{eq:proof-discrete_upper-triangular}
 \begin{split}
{x_1}({k}+1)&=A_{11}x_1({k})+A_{12}x_2({k}) +B_{11}u_1({k})+B_{12}u_2({k})\\
{x}_2({k}+1)&=A_{22}x_2({k}) +B_{22}u_2({k}).
\end{split}
\end{equation}
Let $\psi_1(t,u,0)$ and $\psi_2(t,u,0)$ denote the solution of the systems defined by $(A_{11},B_{11})$ and
$(A_{22},B_{22})$, respectively, i.e.
\begin{align*}
\psi_1(t,u,0) = \sum_{k=0}^{t-1} A_{11}^kB_{11}u(t-1-k) \quad \text{{and}} \quad \psi_2(t,u,0) =\sum_{k=0}^{t-1} A_{22}^kB_{22}u(t-1-k).
\end{align*}
%\marginpar{why not $(f_1,f_2)^\top$? search/replace}

Let $f=\binom{f_1}{f_2} \in X_{n_1+n_2}(\mathbf{P})$ and let $\varepsilon>0$.  Then, ensemble reachability of
$(A_{22},B_{22})$ implies the existence of a time $T_2>0$  and an input $v_2\in {\C^{m_2}\times \cdots \times \C^{m_2} }$
such that 
\[
\| \psi_2(T_2,v_2,0) - f_2 \|_{X_{n_2}(\mathbf{P})} < \varepsilon.
\]
Let
\begin{align*}
\tilde f_2 := \sum_{k=0}^{T_2-1} A_{11}^k \Big(  A_{12} \psi_2(T_2-1-k,v_2,0) +   B_{12}v_2(T_2-1-k)\Big).
\end{align*}
Then, since $(A_{11},B_{11})$ is ensemble reachable, for $\tilde f_1 :=f_1-\tilde f_2$ and $\e>0$ there is a time $T_1>0$
and an input $v_1 \in {\C^{m_1}\times \cdots \times \C^{m_1} } $ such that 
\[
\| \psi_1(T_1,v_1,0) - \tilde f_1 \|_{X_{n_1}(\mathbf{P})} < \varepsilon.
\]
Finally, without loss of generality we {may assume $T_1 = T_2=:T$. Thereby, we can show that
at time $T \in \N$ and for the input $u=\binom{v_1}{v_2}$ the solution $\varphi = \binom{\varphi_1}{\varphi_2}$
of \eqref{eq:proof-discrete_upper-triangular} satisfies}
\[
\|\varphi(T,u,0)- f  \|_{X_{n_1}(\mathbf{P})\times X_{n_2}(\mathbf{P})} < \varepsilon.
\]
Clearly, one has $\varphi_2(T,u,0)= \psi_2(T,v_2,0)$. Also, note that for $t \leq T$ it holds
\begin{align*}
\varphi_1(t,u,0) = \psi_1(t,v_1,0) + \sum_{k=0}^{t-1} A_{11}^k \big( A_{12} \psi_2(t-1-k,v_2,0) +   B_{12}v_2(t-1-k)\big)
\end{align*}
and therefore
\begin{align*}
 \varphi_1(T,u,0) -f_1 =  \psi_1(T,v_1,0) + \tilde f_2 - f_1  = \psi_1(T,v_1,0) - \tilde f_1.
\end{align*}
This shows the assertion in the discrete-time case. {The continuous-time case follows mutatis mutandis.}
\end{proof}

\medskip
\noindent
The latter statement is an extension and correction of \cite[Proposition~2]{schonlein2016controllability}, 
where it was claimed that the reverse implication also holds. This, however, is false in general, see 
Remark~\ref{rem:upper-triangular-not-nec}.

%%%%%%%%%%%%%%%%%%%%%%%%%%%%%%%%%%%%%%%%%%%%%%%%%%%%%%%%%%%%%%%%%%%%%%%%%%%%%%%%%
\subsection{Parallel structures}%\marginpar{{New subsection!}}
\label{subsec:parallel-structures}

Next we want to analyse parallel connections of linear ensembles. Again, we want to
emphasise that our result (Theorem \ref{thm:parallel_multi}) applies in principle to arbitrary linear
systems on Banach spaces even though verifying the respective spectral conditions is in general very
difficult. For linear ensembles, however, the spectral conditions often allow a simplified test.
More on this issue can be found  {at the beginning of Subsection \ref{subsec:decomposition-techniques}}.

Before stating and proving our result we recall that a set $\Omega \subset \C$ is non-separating
  if $\C\setminus \Omega$ is connected and introduce the notation
  \begin{equation*}
    \sigma_{X_{n}(\p)} \big(\mathcal{M}_{A}\big) \subset \C\,,
  \end{equation*}
for the spectrum of the multiplication operator $\mathcal{M}_{A} : X_{n}(\p) \to X_{n}(\p)${.}
%\marginpar{$\p$ $\to$ $\p_i$}

\begin{theorem}\label{thm:parallel_multi}
Let $\textbf{P}\subset \mathbb{C}$ be compact. Suppose the pairs $(A_i,B_i) \in C_{n_i,n_i}(\p) \times X_{n_i,m}(\p)$,
$i = 1, \dots, N$ satisfy the following conditions:
\begin{enumerate}
\item[(a)]
$(A_i,B_i)$ are ensemble reachable on $X_{n_i}(\p)$ for $i = 1, \dots, N$.
\item[(b)]
$\sigma_{X_{n_i}(\p)} \big(\mathcal{M}_{A_i}\big)$ has only finitely many connected components for $i = 1, \dots, N$.
\item[(c)]
$\sigma_{X_{n_i}(\p)} \big(\mathcal{M}_{A_i}\big)$ is non-separating for $i = 1, \dots, N$.
\item[(d)]
$\sigma_{X_{n_i}(\p)} \big(\mathcal{M}_{A_i}\big) \cap \sigma_{X_{n_j}(\p)} \big(\mathcal{M}_{A_j}\big) = \emptyset$
for $i\neq j$. 
\end{enumerate}
Then, the parallel connection given by the pair
 \begin{align*}
\left(\begin{pmatrix} A_1& & \\ & \ddots & \\ & & A_N \end{pmatrix} , 
\begin{pmatrix} B_1 \\ \vdots\\ B_N \end{pmatrix}\right)  \in C_{\bar n, \bar n} (\p)\times X_{\bar n ,m}(\mathbf{P}),
\quad \bar n = n_1+\cdots+ n_N
 \end{align*}
is ensemble reachable on $X_{n_1}(\p) \times \cdots \times X_{n_N}(\p)$.
\end{theorem}
\begin{proof}
The proof will be given for the case $N=2$, $m=1$, i.e.~for two single-input pairs 
$(A_1,b_1) \in C_{n_1,n_1}(\p) \times X_{n_1}(\p)$ and $(A_2,b_2) \in C_{n_2,n_2}(\p) \times X_{n_2}(\p)$. The 
arguments easily extend to the general case. Let $\e>0$ and $f=\binom{f_1}{f_2}\in X_{n_1}(\p) \times X_{n_2}(\p)$
be fixed. Since the pairs $(A_1,b_1)$ and $(A_2,b_2)$ are ensemble reachable there are  polynomials $p_1$ and
$p_2$ such that
\begin{align*}
 \| p_1(A_1)b_1 - f_1\|_{X_{n_1}(\p)} < \tfrac{\e}{2}   \qquad \text{ and } \qquad \| p_2(A_2)b_2 - f_2\|_{X_{n_2}(\p)} < \tfrac{\e}{2}.
\end{align*}
By assumption the compact sets {$\sigma_{X_{n_1}(\p)} \big(\mathcal{M}_{A_1}\big)$ and
  $\sigma_{X_{n_2}(\p)} \big(\mathcal{M}_{A_2}\big)$} are disjoint, do
not separate the plane and have only finitely many pairwise disjoint connected components. Thus, an application
of Lemma~\ref{lem:extension}~(see Appendix~\ref{appendix:1}) yields disjoint compact sets $K_1$ and $K_2$ which
do not separate the plane and properly contain {$\sigma_{X_{n_1}(\p)} \big(\mathcal{M}_{A_1}\big)$
and $\sigma_{X_{n_2}(\p)} \big(\mathcal{M}_{A_2}\big)$}, respectively. We note that, the polynomials $p_1,p_2$
and the {compact sets} $K_1$, $K_2$ are fixed {from now on}. Next, we consider
the functions
\begin{align*}
 h_1 \colon K_1 \cup K_2 \to \C, \quad h_1(z)= 
 \begin{cases}
  1 & \text{ if } z \in K_1\\
  0 & \text{ if } z \in K_2
 \end{cases}\\%\quad \text{and} \quad 
  h_2 \colon K_1 \cup K_2 \to \C, \quad h_2(z)= 
 \begin{cases}
  0 & \text{ if } z \in K_1\\
  1 & \text{ if } z \in K_2.
 \end{cases}
\end{align*}
Then, for every $\tilde \e >0$, Lemma~\ref{lem:runge-extension}~(see Appendix~\ref{appendix:1}) implies the
existence of polynomials $q_1$ and $q_2$ such that
\begin{align*}
 | h_1(z) - q_1(z)| < \tilde\e \quad \text{ and } \quad | h_2(z) - q_2(z)| < \tilde\e
\quad \text{ for all } z \in K_1 \cup K_2.
\end{align*}
{In particular, one has}
\begin{align*}
 |q_1(z)| <  (1+\tilde\e) \quad \text{ and } \quad  |q_2(z)| < \tilde\e   \qquad \text{ for all } z \in K_1.
\end{align*}
Defining the polynomial 
\begin{align*}
 p(z) := q_1(z)\, p_1(z) + q_2(z)\, p_2(z)
\end{align*}
we shall prove
\begin{align*}
\left\| 
\begin{pmatrix} p(A_1)b_1 - f_1 \\ p(A_2)b_2 - f_2 \end{pmatrix}
 \right\|_{X_{n_1}(\p) \times X_{n_2}(\p)} \leq \e.
\end{align*}
Without loss of generality we consider only the first component and show
\begin{align*}
\| p(A_1)b_1 - f_1 \|_{X_{n_1}(\p)} \leq \e.
\end{align*}
Obviously, one has the estimate
\begin{align*}
\| & p(A_1)b_1 - f_1 \|_{X_{n_1}(\p)} =
\| q_1(A_1)\, p_1(A_1)b_1 + q_2(A_1)\, p_2(A_1)b_1 - f_1 \|_{X_{n_1}(\p)} \\
& \leq \| q_1(A_1)\, p_1(A_1)b_1 - p_1(A_1)b_1\|_{X_{n_1}(\p)} + \| p_1(A_1)b_1 - f_1\|_{X_{n_1}(\p)} 
+ \|q_2(A_1)\, p_2(A_1)b_1 \|_{X_{n_1}(\p)} \\
   & {\leq \| (q_1(A_1)-I) \, p_1(A_1)\| \, \|b_1\|_{X_{n_1}(\p)}
     + \|p_1(A_1)b_1 - f_1 \|_{X_{n_1}(\p)} + \| q_2(A_1) \, p_2(A_1)\| \, \|b_1 \|_{X_{n_1}(\p)},}
\end{align*}
where {$\| (q_1(A_1)-I) \, p_1(A_1)\|$ and $ \| q_2(A_1) \, p_2(A_1)\|$} denote
the respective operator norms %, cf. \eqref{eq:def-operator-norm}. 
on $X_{n_1}(\p)$. Using the Dunford-Taylor formula, cf. \cite[Chapter~1, \S~5, Section~6]{kato}, for any
polynomial $q$ one has
\begin{align*}
 q(A_1(\theta))= \frac{1}{2 \pi i } \int_{\gamma} q(z) (zI-A_1(\theta))^{-1} \di{z},
\end{align*} 
where $\gamma$ consists of finitely many positive oriented simple closed grid polygons in
$K_1 \setminus \sigma_{X_{n_1}(\p)} \big(\mathcal{M}_{A_1}\big)$ and its trace is denoted by $\operatorname{tr} \gamma$.
Note that the polynomial $\tilde q_1(z):=q_1(z)-1$ satisfies $|\tilde q_1(z)|<\tilde\e$ for all $z \in K_1$ and thus we have 
\begin{align*}
\| (q_1(A_1)-I) \, p_1(A_1)\|
& % \leq \kappa\, \max_{\theta \in \p} \| \tilde q_1(A_1(\theta))\|_{n_1 \times n_1}
  % \leq \frac{1}{2 \pi} \int_{\gamma} |\tilde q_1(z)| \, \| (zI-A_1(\theta))^{-1}\|_{n_1 \times n_1} \di{z}  \\
  \leq \frac{1}{2 \pi} \int_{\gamma} |\tilde q_1(z)| \, |p_1(z)| \, \| (zI-\mathcal{M}_{A_1})^{-1}\| \di{z} \\ 
  % & \leq \tilde\e \, \frac{ \kappa \,L_{\gamma}}{2 \pi}
  % \,\max_{\theta \in \p}  \, \max_{z \in \operatorname{tr} \gamma}  \|(zI-A_1(\theta))^{-1}\|_{n_1 \times n_1}.
  & \leq \frac{\tilde\e \, L_{\gamma}}{2 \pi} \,\max_{z \in \operatorname{tr} \gamma}  |p_1(z)| \, \|(zI-\mathcal{M}_{A_1})^{-1}\|\,.
\end{align*}
Similarly, it follows
\begin{align*}
  \| q_2(A_1) \, p_2(A_1)\|
  % & \leq \kappa\, \max_{\theta \in \p} \frac{1}{2\pi } \int_{\gamma} |q_2(z)|\,  \|(zI-A_1(\theta))^{-1}\|_{n_1\times n_1} \di{z} \\
  % & \leq \tilde\e \, \frac{\kappa\, L_{\gamma}}{2\pi}  \max_{\theta \in \p}\max_{z \in \operatorname{tr} \gamma} \|(zI-A_1(\theta))^{-1}\|_{n_1\times n_1}\, .
  & \leq \frac{\tilde\e \, L_{\gamma}}{2 \pi} \,\max_{z \in \operatorname{tr} \gamma}  |p_2(z)| \, \|(zI-\mathcal{M}_{A_1})^{-1}\|\,.
\end{align*}
Then, setting 
$\alpha_1:=\max_{z \in \operatorname{tr} \gamma} |p_1(z)|\|(zI-\mathcal{M}_{A_1})^{-1}\|$, 
$\alpha_2:= \max_{z \in \operatorname{tr} \gamma} |p_2(z)|\|(zI-\mathcal{M}_{A_1})^{-1}\|$,
% $\alpha_3:= \max_{\theta \in \p}\|p_2(A_1(\theta))\|_{n_1 \times n_1}$
and $\beta_1:=\|b_1\|_{X_{n_1}(\p)}$, 
% $\alpha_2:=\max_{z \in {\operatorname{tr}\gamma}} |p_2(z)| $, 
%
we obtain
% $\| p_1(A_1)\| \leq \kappa \,\alpha_2$  and $\| p_2(A_1)\| \leq \kappa \,\alpha_3$ and therefore
\begin{align*}
\| p(A_1)b_1 - f_1 \|_{X_{n_1}(\p)}
\leq  \tfrac{\tilde \e \, L_{\gamma}}{2\pi} \, \alpha_1 \, \beta_1  
+  \tfrac{\e}{2} +  \tfrac{\tilde \e \, L_{\gamma}}{2\pi} \, \alpha_2 \, \,\beta_1.
\end{align*}
Thus the claim follows by picking 
$\tilde \e < \tfrac{\e \pi}{L_{\gamma} \, \beta_1 (\alpha_1 + \alpha_2)}$.
\end{proof}

\medskip
\noindent
{A straightforward analysis of the above proof reveals that one can also allow $\p$
to depend on $i$. For simplicity of notation, we state the corresponding result only for $N=2$.}
{
\begin{corollary}
\label{cor:parallel_multi}
Let $\textbf{P}_1, \textbf{P}_2\subset \mathbb{C}$ be compact. Suppose the pairs
$(A_1,B_1) \in C_{n_1,n_1}(\p_1) \times X_{n_1,m}(\p_1)$ and $(A_2,B_2) \in C_{n_2,n_2}(\p_2) \times Y_{n_2,m}(\p_2)$
satisfy the following conditions:
\begin{enumerate}
\item[(a)]
$(A_1,B_1)$ and $(A_2,B_2)$ are ensemble reachable on $X_{n_1}(\p_1)$ and $Y_{n_2}(\p_2)$, respectively.
\item[(b)]
  $\sigma_{X_{n_1}(\p_1)} \big(\mathcal{M}_{A_1}\big)$ and $\sigma_{Y_{n_2}(\p_2)} \big(\mathcal{M}_{A_2}\big)$
  have only finitely many connected components.
\item[(c)]
  $\sigma_{X_{n_1}(\p_1)} \big(\mathcal{M}_{A_1}\big)$ and $\sigma_{Y_{n_2}(\p_2)} \big(\mathcal{M}_{A_2}\big)$ are non-separating.
\item[(d)]
$\sigma_{X_{n_1}(\p_1)} \big(\mathcal{M}_{A_1}\big) \cap \sigma_{X_{n_2}(\p_2)} \big(\mathcal{M}_{A_2}\big) = \emptyset$. 
\end{enumerate}
Then, the parallel connection given by the pair
% \begin{align*}
$ \left(\begin{pmatrix} A_1& 0 \\ 0 & A_2 \end{pmatrix} , 
\begin{pmatrix} B_1 \\ B_2 \end{pmatrix}\right)$
% \in C_{\bar n, \bar n} (\p)\times X_{\bar n ,m}(\mathbf{P}), \quad \bar n = n_1+\cdots+ n_N
% \end{align*}
is ensemble reachable on $X_{n_1}(\p_1) \times  Y_{n_2}(\p_2)$.
\end{corollary}}

\begin{remark}\label{rem:thm1}
  \begin{enumerate}
  \item[(a)]
We note that the technique of the construction of the polynomial $p =q_1p_1 + q_2 p_2 $ is well-known in complex
approximation, cf. e.g. \cite{andrievskii2005polynomial}. This construction can also be extended such that
interpolation properties of the polynomials $p_1$ and $p_2$ are transferred  to the polynomial $p$. 
\item[(b)] {In the finite-dimensional case, a {complete} characterization for a parallel connection of reachable
    systems to be reachable is due to Fuhrmann in 1975. For a comprehensive exposition on this topic we refer to
    the textbook \cite[Section~1.1]{fuhrmann2015mathematics} (and the references therein).}
\end{enumerate}
\end{remark}

\medskip
The following example illustrates the necessity of the spectral conditions of Theorem~\ref{thm:parallel_multi}
and furthermore demonstrates that the spectrum of a multiplication operator $\mathcal{M}_A$ is not necessarily
given by the union of all pointwise spectra $\sigma(A(\theta))$.

\begin{example}\label{example:non-sep}
{
Consider $\p := \partial \mathbb{D}$ and
\begin{equation}\label{eq:sys_ex_2}
A(\theta) := \begin{pmatrix} \theta & 0 \\ 0 &  \frac{1}{2}\theta \end{pmatrix}\,,
\quad b(\theta) := \begin{pmatrix}1 \\ 1\end{pmatrix}
\quad\text{for}\quad \theta \in \p\,. 
\end{equation}
Let $\mathbb{D}$ denote the unit disc and let ${\cal A}(\mathbb{D})$ denote the disc algebra, that is
$f\in {\cal A}(\mathbb{D})$ if and only if $f\colon \mathbb{D}\to \C$ is holomorphic and $f$ extends
continuously to $\overline{\mathbb{D}}$. Moreover, let $X_2(\p) := X(\p) \times X(\p)$, where $X(\p)$
is the space of boundary values of disc algebra functions, i.e.~$X(\p)$ is the subspace of $C(\p)$
which can be associated with disc algebra functions in the sense 
\begin{equation}\label{eq:disc-algebra}
{X(\p)} := \{f \in C(\p)\;|\; \exists \, g \in {\cal A}(\mathbb{D}) \, : \, f= g\big|_{\p}\}\,.
\end{equation}
Then, the subsystems $(A_1,b_1)$ and $(A_2,b_2)$ are ensemble reachable on $X(\p)$. However, $(A,b)$ is not ensemble
reachable on {$X_2(\p)$}. This can be seen as follows: Let $f=\binom{f_1}{f_2} \in X_{{2}}(\p)$ be given.
% Ensemble reachability of $(A_2,b_2)$ follows simply by the Weierstra\ss\ Approximation Theorem, see also Proposition
% \ref{prop:scalar}.
To prove ensemble reachability of $(A_1,b_1)$ we exploit the fact that $\cal A(\mathbb{D})$ coincides with the
closure all complex polynomials with respect to the maximum norm, cf. \cite[Sec.~6.2]{zhu_book}. Hence, for
$f_1 \in X(\p)$ and $g \in {\cal A}(\mathbb{D})$ such that \eqref{eq:disc-algebra} holds we can find a sequence
$p_n$ of complex polynomials such that 
\begin{equation*}
\max_{z \in \overline{\mathbb{D}}}\| p_n(z) - g(z)\| \to 0 \quad \text{\rm as} \; n \to \infty\
\end{equation*}
and, in particular
\begin{equation}\label{eq:approximation-disc-algebra}
\max_{\theta \in \p}\| p_n(\theta) - f_1(\theta)\| = \max_{\theta \in \p}\| p_n(\theta) - g(\theta)\| 
\to 0 \quad \text{\rm for} \; n \to \infty\,. 
\end{equation}
This show that $(A_1,b_1)$ is ensemble reachable. Moreover, the identities $A_2 = \frac{1}{2} A_1$ and $b_2 = b_1$
immediately imply that $(A_2,b_2)$ is also ensemble reachable. It follows, however, by the maximum principle, that
any  sequence $p_n$ which satisfies \eqref{eq:approximation-disc-algebra} has to converges uniformly
on $\overline{\mathbb{D}}$ to $g \in {\cal A}(\mathbb{D})$. In particular, one has that $p_n(\frac{\theta}{2})$
converges uniformly to $g(\frac{\theta}{2})$ for all $\theta \in \p$, i.e. there is no degree of freedom for
choosing $f_2$. Hence $(A,b)$ is not ensemble reachable.}
\end{example}
\medskip

Now the question arises why ensemble {reachability} fails in the above example. First, one might
think that {the non-separating condition~(c)} is violated, because the union of the
pointwise spectra $\sigma(A_1(\theta))$ yields the unit circle $\partial \mathbb{D}$ which is obviously
a separating subset of $\C$. But a more thorough analysis shows that condition (d), the disjointness
condition fails, because the spectra $\sigma_{X(\p)}(\mathcal{M}_{A_1})$ and $\sigma_{X(\p)}(\mathcal{M}_{A_2})$
{coincide with $\overline{\mathbb{D}}$ and $\frac{1}{2}\overline{\mathbb{D}}$, respectively.
This follows straightforwardly} from the maximum principle and the fact that the spectrum of the multiplication
operator $f(z) \mapsto zf(z)$ on the disc algebra ${\cal A}(\mathbb{D})$ is given by the unit disc
$\overline{\mathbb{D}}$.

%%%%%%%%%%%%%%%%%%%%%%%%%%%%%%%%%%%%%%%%%%%%%%%%%%%%%%%%%%%%%%%%%%%%%%%%%%%%%%%%%
\subsection{Decomposition techniques}%\marginpar{{New subsection!}}
\label{subsec:decomposition-techniques}

\medskip
\noindent
{By the above example we have seen that the spectrum of ${\cal M}_A: X_n(\p) \to X_n(\p)$ does
  not always coincide with the union
  \begin{align}\label{eq:spectrum}
    \bigcup_{\theta \in \mathbf P} \sigma \left( A(\theta) \right) =: \operatorname{spec} A (\p)
  \end{align}
  of all {pointwise spectra} $\sigma \left( A(\theta) \right)$. More precisely, it demonstrates that the inclusion
  \begin{align}\label{eq:spectrum2}
    \sigma_{X_n(\p)}(\mathcal{M}_{A}) \subset \bigcup_{\theta \in \mathbf P} \sigma \left( A(\theta) \right)
  \end{align}
  is in general false. However, if $X_n(\p)$ is a M-space  $\mathcal{M}_{(A - \lambda I)^{-1}}$ is obviously
  the (bounded) inverse of $\mathcal{M}_{A - \lambda I}$ whenever $\lambda \in \C \setminus \operatorname{spec} A (\p)$
  and thus \eqref{eq:spectrum2} holds for every M-space while equality cannot be guaranteed as the following example
  illustrates: $\p := [-1,1]$ and
  $X([-1,1]) := \big\{x \in C([-1,1]) \;|\; x(\theta) = 0 \;\text{for all}\; \theta \in [-1,0]\big\}$ and
  $a(\theta) = \theta$. Yet, in many standard case, like $X_n(\p) = C_n(\p)$ or $X_n(\p) = L_n^q(\p) $, one has
  equality in \eqref{eq:spectrum2}, cf.~\cite{holderrrieth1991}.}

{Now a promising strategy for M-spaces is based on the idea of decomposing $\operatorname{spec} A (\p)$
  such that the matrix pair $(A,B)$ can be continuously transformed into a simplifying block structure as above in
  Theorem \ref{thm:parallel_multi}. To this end, we introduce the set-valued {\it spectral map}
$\operatorname{spec} A \colon \p \rightsquigarrow \C$, 
\begin{align*}%\label{eq:def-spec-A-new}
\operatorname{spec} A(\theta) :=  \sigma \left( A(\theta) \right)\,.
\end{align*}
Thereby, \eqref{eq:spectrum2} is equivalent to say that the image of the spectral map 
% \begin{align*}%\label{eq:def-spec-A-new}
% \operatorname{spec} A (\p) := \bigcup_{\theta \in \mathbf P} \sigma \left( A(\theta) \right).
% \end{align*}
contains the spectrum of the matrix-multiplication operator ${\cal M}_A$.}
% \marginpar{Def: Bild einer mengenwertigen Abbildung!}
{A set-valued map $\Gamma: \p \rightsquigarrow \C$ is termed {\it partial spectral map} if the
  inclusion $\Gamma(\theta) \subset \sigma(A(\theta))$ is satisfied for all $\theta \in \p$ and it is called
  {\it continuous} if continuity with respect to the Hausdorff metric holds. {Moreover,
    a single-valued partial spectral map}
  % , i.e. $\Gamma(\theta)$ is a singleton for every $\theta \in \p$,
  is referred to as {an}
{\it eigenvalue selection} and will be denoted by $\lambda: \p \to \C$. Two partial spectral maps $\Gamma_1$
and $\Gamma_2$ are {\it pointwise disjoint} if $\Gamma_1(\theta) \cap \Gamma_2(\theta) = \emptyset$ for all
$\theta \in \p$. They are {\it strictly disjoint} if {one has} $\Gamma_1(\p) \cap \Gamma_2(\p) = \emptyset$.
Obviously, strict disjointness implies pointwise disjointness. Finitely many (continuous) partial spectral
maps $\Gamma_1, \dots, \Gamma_N$ are called a (continuous) {\it spectral decomposition} of $A$ if
\begin{equation*}
\bigcup_{i = 1}^N  \Gamma_i(\theta) = \sigma(A(\theta))
\end{equation*}
for all $\theta \in \p$. If $\Gamma_1, \dots, \Gamma_N$ are additionally single-valued the spectral decomposition
will be call{ed} \emph{single-valued}. Note, that $\Gamma_1, \dots, \Gamma_N$ are not required
to be disjoint in any sense.
However, if $\Gamma_1, \dots, \Gamma_k$ are pairwise pointwise/strictly disjoint the spectral decomposition
will be call \emph{pointwise/strictly disjoint}.} Certainly, there exists always a continuous spectral decomposition
of $A$, for instance the trivial one $\Gamma(\theta) := \sigma(A(\theta))$, and sometimes this is even the only one
which is continuous as in the case
\begin{align*}
A(\theta) := \begin{pmatrix} 0 & 1\\ \theta & 0 \end{pmatrix}\,,
\quad \theta \in \p := \overline{\mathbb{D}}\,.
\end{align*}

\noindent
However, locally or if $\p$ has nice topological properties one can obtain continuous spectral decompositions
which are considerably finer.
\begin{lemma}\label{lem1:spectral-family}
Let $\p \subset \C$ be compact and $A \in C_{n,n}(\p)$.
\begin{enumerate}
\item[(a)]
  For every {relatively} open subset $U \subset \p$ there exists an {relatively} open subset $V \subset U$ such that the restriction $A|_V$
  allows a continuous {single-valued spectral decomposition}.
\item[(b)]
  If $\p$ is homeomorphic to $[0,1]$ then there exists a global continuous {single-valued spectral decomposition}
  for $A$.
\item[(c)]
  If $\p$ {is} contractible, locally path-connected and if the eigenvalues of $A(\theta)$ are simple for all
  $\theta \in \p$ then there exists a global continuous single-valued spectral decomposition.
\end{enumerate}
\end{lemma}

% \begin{proof}
%   (a): Assume without loss of generality $U = \p$ and define
%   $k:= \max_{\theta \in \p}{\operatorname{card} \sigma(A(\theta)) }$.
%   Choose $\theta_0 \in \p$ with $\sigma(A(\theta_0)) = \{\lambda_1, \dots, \lambda_k\}$ and $\lambda_i \neq \lambda_j$
%   for $i \neq j$. Then there exist disjoint neighbourhoods $U_i$ with $\lambda_i \in U_i$ for $i = 1, \dots, k$ and
%   therefore Rouch\'{e}'s Theorem \cite[Theorem~10.43~(b)]{rudin1987realcomplex} plus the maximality of $k$ guarantees
%   the existence of an open neighbourhood $V$ of $\theta_0$ with $|\sigma(A(\theta)) \cap U_i| = 1$ for $i = 1, \dots, k$
%   and all $\theta \in V$. This allows to define a single-valued continuous spectral decomposition on $V$.
% 
% \medskip
% \noindent
% (b): See \cite[{Chapter~II, \S~5, Theorem~5.2}]{kato}.
% 
% \medskip
% \noindent
% (c): The result should be well-known and follows from a straightforward application of the Lifting
% Theorem, cf.~\cite[Ch.~III, Thm~4.1 \& Cor.~4.3]{Bredon_top_geo_1993}. Therefore, we only sketch the
% necessary arguments in Appendix \ref{appendix:2}.
% \end{proof}

\medskip
\noindent
The proof of Lemma~\ref{lem1:spectral-family} is given in Appendix~\ref{appendix:2}. The arguments verifying
part~(c) actually indicate that even continuous eigenvector selections should be possible in the contractible
case. Our next result shows that this is in fact true {even without simplicity assumption on the eigenvalues
once a suitable spectral decomposition can be guaranteed. {The statement should be known to experts. However,
as the standard literature on perturbations theory focuses on the analytic case}, cf. \cite{baumgaertel,kato},
we could not locate an appropriate reference. {Hence we will provide a proof in Appendix~\ref{appendix:3}}.

% The next step is to obtain a block-diagonal decomposition of the matrices $A(\theta)$. We will assume that the
% parameter set $\p \subset \C$ is additionally  contractible. Note that if $\p$ is compact interval it is contractible.

\begin{proposition}\label{prop:spectral-family}%\marginpar{Lemma to Prop}
Let $\p \subset \C$ be compact and contractible and let $A \in C_{n,n}(\p)$. Assume that $\Gamma_1, \dots, \Gamma_N$
is a pointwise disjoint continuous spectral decomposition. Then there exists a continuous family of invertible
matrices $T(\theta)$ such that 
\begin{equation}\label{eq:block-diagonal}
T(\theta)^{-1} A(\theta) T(\theta) = 
\begin{pmatrix}
A_1(\theta) & & 0\\
 & \ddots & \\
0 & & A_N(\theta)
\end{pmatrix}
\end{equation}
and the spectra of $A_i(\theta)$ are given by $\Gamma_i(\theta)$ for all $\theta \in \p$ and $i = 1, \dots, N$.
\end{proposition}

{In the sequel, assume that $\Gamma_1, \dots, \Gamma_N$ is a pointwise disjoint continuous
spectral decomposition of $(A,B) \in C_{n,n}(\mathbf{P}) \times X_{n,m}(\p)$.} Then the subsystems
$(A_i,B_i) \in C_{n_i,n_i}(\mathbf{P}) \times X_{n_i,m}(\p)$ given by 
\begin{equation}
\label{eq:subsystems}
A_i(\theta) := \Pi_i T^{-1}(\theta)A(\theta)T(\theta) \Pi_i^{*}\,,\quad\quad
B_i(\theta) := \Pi_i T^{-1}(\theta)B(\theta)
\end{equation}
{with} $\Pi_i := \big(0 \dots 0 \;I_{n_i}\; 0 \dots 0 \big) \in \mathbb{C}^{n_i \times n}$ which result
from Proposition~\ref{prop:spectral-family} are called the {\it associated subpairs}. Moreover, the multiplication
operator $\mathcal{M}_T: X_n(\p) \to X_n(\p)$, $\mathcal{M}_Tx(\theta) := T(\theta)x(\theta)$ is {termed
{\it associated transformation map}. Note that the subsystems $(A_i,B_i)$  are (up to isomorphisms) independent
on the choice of $T(\theta)$ as {they} result from {the} corresponding
eigenspaces of $A(\theta)$}.

\begin{theorem}\label{thm:decomposition}%[\cite{P3}]\label{MAIN}
  Let $\textbf{P} \subset \C$ be compact and contractible and let $X_n(\p)$ be a $M$-space.
  Moreover, let {$\Gamma_1, \dots, \Gamma_N$ be a pointwise disjoint continuous spectral
  decomposition of $(A,B) \in C_{n,n}(\p)\times X_{n,m}(\p)$.}
% and let $(A_i,B_i) \in C_{n_i}(\p) \times X_m(\p)$ for $i=1,\dots,k$ denote the
% associated subsystems and $\mathcal{M}_T$ the associated transformation map.
% \marginpar{Check proof}
\begin{enumerate}
\item[(a)]
  If $(A,B)$ is ensemble reachable on $X_n(\p)$, then the associated subpairs $(A_i,B_i)$ are ensemble reachable on
  % $X_i(\p) := \Pi_i \mathcal{M}_T^{-1}  X_n(\p) $
  {$X_{n_i}(\p)$ for all $i=1,\dots,N$.}
\item[(b)]
  Conversely, if for all $i=1,\dots,N$ the associated subpairs $(A_i,B_i)$ are ensemble reachable on
  % $X_i(\p) := \Pi_i \mathcal{M}_T^{-1}  X_n(\p)$
  {$X_{n_i}(\p)$ and if the decomposition $\Gamma_1, \dots, \Gamma_N$ is additionally strictly
  disjoint such that the images $\Gamma_i(\p)$} do not separate the plane, then $(A,B)$ is ensemble reachable on $X_n(\p)$.
\end{enumerate}
\end{theorem} 
\begin{proof}
  (a): Assume that $(A,B)$ is ensemble reachable on $X_n(\p)$. Then {$\mathcal{M}_T$ and
    $\mathcal{M}_{T^{-1}}$ are well-defined bounded isomorphisms on the M-space $X_n(\p)$ and thus $(T^{-1}AT, T^{-1}B)$
    is also ensemble reachable on $X_n(\p)$. Hence $(A_i,B_i) = (\Pi_iT^{-1}AT\Pi_i^*, \Pi_iT^{-1}B)$ are obviously
    ensemble reachable on $X_{n_i}(\p)$ for $i=1,\dots,N$.} 

\medskip
\noindent
(b): Conversely, assume that $(A_i,B_i)$ are ensemble reachable on {$X_{n_i}(\p)$} for $i=1,\dots,N$.
Then, as $\Gamma_1, \dots, \Gamma_N$ is a {strictly} disjoint continuous spectral decomposition
such that $\C\setminus \Gamma_i(\p)$ is connected for every $i=1,\dots,N$, an application of Theorem~\ref{thm:parallel_multi}
yields that {the pair $(T^{-1}AT, T^{-1}B)$ is ensemble reachable on $X_n(\p)$ and likewise $(A,B)$
as $X_n(\p)$ is a M-space}.
\end{proof}

\medskip
The significance of Theorem~\ref{thm:decomposition} is that it allows to decompose the ensemble reachability problem
into several smaller problems according to {an underlying} spectral decomposition of the matrix
multiplication operator induced by $A(\theta)$. 

\begin{remark}
  {
    What can be done if the parameter space $\p$ is not contractible? -- If it can be decomposed into finitely many
    % disjoint
    contractible and compact components $\p_1,...,\p_k$ {one can obviously combine Corollary \ref{cor:parallel_multi}} and Theorem~\ref{thm:decomposition}.}
 % \marginpar{Remark~\ref{rem:thm1}~(a) \"uberfl\"ussig}
\end{remark}

%%%%%%%%%%%%%%%%%%%%%%%%%%%%%%%%%%%%%%%%%%%%%%%%%%%%%%%%%%%%%%%%%%%%%%%%%%%%
\section{Uniform ensemble reachability}\label{sec:unif}
%%%%%%%%%%%%%%%%%%%%%%%%%%%%%%%%%%%%%%%%%%%%%%%%%%%%%%%%%%%%%%%%%%%%%%%%%%%%

In this section we focus on necessary and sufficient conditions for ensemble reachability on the Banach space of all
continuous functions, i.e. we consider $X_n(\p)=C_n(\p)$. We will put special emphasis on a clear distinction between
pointwise conditions (i.e.~conditions which have to be satisfied for all $\theta \in \p$) and ``global'' conditions which
are in general more difficult to test. We will first treat single input systems and the multi-input case afterwards. Note
that the results of this section extend previous results in
\cite{helmke2014uniform,li_tac_2016,scherlein2014open,schonlein2016controllability}, 
where the parameter set is assumed to be a compact real interval.

%%%%%%%%%%%%%%%%%%%%%%%%%%%%%%%%%%%%%%%%%%%%%%%%%%%%%%%%%%%%%%%%%%%%%%%%%%%%
\subsection{Single-input parameter-dependent systems}\label{sec:unif-si}
%%%%%%%%%%%%%%%%%%%%%%%%%%%%%%%%%%%%%%%%%%%%%%%%%%%%%%%%%%%%%%%%%%%%%%%%%%%%

\medskip

We begin this {subsection with conditions on the single-input pairs $(A,b) \in C_{n,n}(\p) \times C_{n}(\p)$
that are necessary for uniform ensemble reachability. In the following statement the necessary conditions~(d) and~(e) extend the necessary
conditions given in \cite[Lem.~1]{helmke2014uniform}. Recall that the set-valued map
$\operatorname{spec} A \colon \p \rightsquigarrow \C$ is termed {\it injective} if
$\sigma \big( A(\theta_1) \big) \cap \sigma \big( A(\theta_2) \big) = \emptyset$ for all $\theta_1\neq \theta_2 \in \p$.}
% \marginpar{What's new?}

% Recall that a set-valued map $F\colon X \rightsquigarrow Y$ between the normed space $X,Y$ is called {\it injective}
% if for all $x_1\neq x_2 \in X$ it holds $F(x_1) \cap F(x_2) = \emptyset$.    

\begin{theorem}\label{thm:CC-SI-necessary-conditions}
Let $\p \subset \C$ be compact. Suppose $(A,b) \in C_{n,n}(\p) \times C_{n}(\p)$ is uniformly ensemble reachable. Then, the
following necessary conditions hold:
\begin{enumerate}
\item[(a)]
The pairs $\big(A(\theta),b(\theta))\big)$ are reachable for all $\theta \in \p$.
\item[(b)]
The eigenvalues of $A(\theta)$ have geometric multiplicity one for 
all $\theta \in \p$.
\item[(c)] The spectral map $\operatorname{spec} A$ is injective.
\item[(d)] If $\p$ is additionally contractible and $\operatorname{card} \p>1$, then the set $\{\theta \in \p \;|\; \operatorname{card} \sigma(A(\theta)) = n\}$ is open
and dense in $\p$.
\item[(e)]
The set $\p \subset \C$ has no interior points. % (relative to $\C$).
% \item $\operatorname{dim}_{\C}\p \leq 1$
\end{enumerate} 
\end{theorem}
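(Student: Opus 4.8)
The plan is to translate uniform ensemble reachability into density of $\mathcal{L}_{(A,b)}=\operatorname{span}\{A^kb \mid k\in\N\}$ in $C_n(\p)$ and then, for each violated condition, to exhibit a nonzero bounded linear functional on $C_n(\p)$ that annihilates $\mathcal{L}_{(A,b)}$; by duality this contradicts density. For (a), if $\big(A(\theta_0),b(\theta_0)\big)$ fails the Kalman/Hautus test, I pick a nonzero row vector $w^*$ with $w^*A(\theta_0)^kb(\theta_0)=0$ for all $k$, so that $g\mapsto w^*g(\theta_0)$ is bounded, kills every $A^kb$, but is nonzero on some $g\in C_n(\p)$. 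Condition (b) is then immediate: single-input reachability at $\theta$ forces $A(\theta)$ to be cyclic (non-derogatory), hence each eigenvalue sits in a single Jordan block and has geometric multiplicity one. For (c), suppose $\lambda\in\sigma(A(\theta_1))\cap\sigma(A(\theta_2))$ with $\theta_1\neq\theta_2$ and let $w_1^*,w_2^*$ be corresponding left eigenvectors; since $w_i^*p(A(\theta_i))b(\theta_i)=p(\lambda)\,w_i^*b(\theta_i)$ with $w_i^*b(\theta_i)\neq 0$ by (a), the bounded functional $\Lambda(g)=\tfrac{w_1^*g(\theta_1)}{w_1^*b(\theta_1)}-\tfrac{w_2^*g(\theta_2)}{w_2^*b(\theta_2)}$ vanishes on $\mathcal{L}_{(A,b)}$ but is nonzero on a $g$ separating $\theta_1,\theta_2$, again contradicting density.

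For (e) I argue by contradiction: if $\p$ had an interior point it would contain an open disc $D$, and by the restriction property (Lemma~\ref{lem:resriction-property}) the pair would be ensemble reachable on $C_n(D)$. Lemma~\ref{lem1:spectral-family}(a) yields an open $V\subset D$ carrying a single-valued continuous eigenvalue branch $\lambda_1$; by (c) this branch is injective, so by invariance of domain $\Omega:=\lambda_1(V)$ is open in $\C$. Passing to the associated subpair via Lemma~\ref{lem2:spectral-family} and Theorem~\ref{thm:decomposition}(a) and projecting $p(A_1)b_1$ onto the eigenvalue direction shows that $\{p(\lambda_1)\}$ must be dense in $C(V)$, i.e.\ polynomials are dense in $C(\Omega)$. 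But $\Omega$ has nonempty interior, so every uniform limit of polynomials on $\Omega$ is holomorphic there and cannot approximate, say, $z\mapsto\overline{z}$, a contradiction.

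Condition (d) is where the real work lies. Openness of the set $\{\theta:\operatorname{card}\{\sigma(A(\theta))\}=n\}$ is easy, since the discriminant of the characteristic polynomial of $A(\theta)$ is continuous in $\theta$ and nonvanishing exactly on this set. For density I assume the complement has nonempty interior $U$; by local connectedness $U$ contains a nondegenerate arc $C$ homeomorphic to $[0,1]$, and by Lemma~\ref{lem:resriction-property} together with Lemma~\ref{lem1:spectral-family}(b) the pair is ensemble reachable on $C_n(C)$ while $A|_C$ admits a global single-valued continuous spectral family. Using (b), (c) and the block decomposition of Lemma~\ref{lem2:spectral-family} and Theorem~\ref{thm:decomposition}(a), I isolate an eigenvalue branch $\lambda_1$ of constant algebraic multiplicity $n_1\geq 2$ (which must occur since $\operatorname{card}\{\sigma\}<n$ on $U\supset C$) together with a cyclic single-eigenvalue subpair $(A_1,b_1)$ on $C_{n_1}(C)$. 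In Jordan coordinates $A_1=\lambda_1I+N$, so the last two components of $p(A_1)b_1$ are $p(\lambda_1)$ and $p'(\lambda_1)$, which ensemble reachability would force to be simultaneously dense in $C(C)\times C(C)$.

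The main obstacle is precisely to rule this out: a uniform limit of $p_n(\lambda_1)$ and the uniform limit of the derivatives $p_n'(\lambda_1)$ are not independent, and I would encode their link as a bounded functional of the form $g\mapsto g(\theta_b)-g(\theta_a)-\int_{\lambda_1(C)}(h\circ\lambda_1^{-1})\,\di{z}$ via the fundamental theorem of calculus $\int_{\lambda_1(C)}p_n'\,\di{z}=p_n(\lambda_1(\theta_b))-p_n(\lambda_1(\theta_a))$. The delicate point is that this line integral requires the image arc $\lambda_1(C)$ to be rectifiable, which is not automatic for a merely continuous eigenvalue selection; the crux is therefore to choose the arc $C$ (or reparametrize the branch) so that $\lambda_1(C)$ is rectifiable, or equivalently to reduce cleanly to the real-interval necessary condition of \cite[Lemma~1]{helmke2014uniform}. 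Once a single nondegenerate arc yields $\operatorname{card}\{\sigma(A(\theta))\}=n$ generically along $C$, this contradicts $\operatorname{card}\{\sigma\}<n$ on all of $U\supset C$, and density follows.
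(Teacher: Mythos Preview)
Your treatment of (a)--(c) via annihilating functionals is correct and is a genuine alternative to the paper's route, which instead restricts to finite parameter sets via Lemma~\ref{lem:resriction-property} and applies the Hautus test to the resulting parallel connection. Your argument for (e) is also essentially the paper's, though you bypass (d) by combining Lemma~\ref{lem1:spectral-family}(a) with (c) and invariance of domain, whereas the paper first invokes (d) to reach the simple-eigenvalue locus before diagonalizing.

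For (d) you have correctly isolated both the mechanism --- the impossibility of prescribing $p(\lambda_1)$ and $p'(\lambda_1)$ independently along an arc --- and the obstruction: your line-integral functional is bounded only if $\lambda_1(C)$ is rectifiable, and a continuous injective eigenvalue branch need not have rectifiable image. Neither of your proposed fixes closes this gap: there is no guarantee that \emph{any} sub-arc yields a rectifiable image, and \cite[Lemma~1]{helmke2014uniform} is stated for a real interval and faces the same rectifiability issue internally once the eigenvalue curves lie in $\C$. The paper resolves this not by constructing a bounded functional but by a direct contradiction argument using polygonal approximation. One first produces a (possibly non-rectifiable) path $\gamma\subset\lambda_1(\p_0)$ via Hahn--Mazurkiewicz (Lemma~\ref{lem:exists_path}) and approximates it uniformly by rectifiable polygons $\gamma_N$ (Lemma~\ref{lem:path_approx}). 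Choosing a target that forces $|p(\lambda_1)|<\varepsilon$ while $|p'(\lambda_1)-c|$ is small for a large constant $c$, one estimates $2\varepsilon>|p(z_2)-p(z_1)|=\big|\int_{\gamma_N}p'(\xi)\,\di{\xi}\big|$ from below by essentially $(c-1)\,L_{\gamma_N}$, using uniform continuity of $p'$ on a neighbourhood of $\gamma$ to transfer the derivative bound from $\gamma$ to $\gamma_N$. If $L_\gamma<\infty$ this contradicts the choice of $c$ after letting $N\to\infty$; if $L_\gamma=\infty$ then $L_{\gamma_N}\to\infty$ gives the contradiction directly. This is precisely the argument worked out in the proof of Proposition~\ref{prop:Jordan-norm-form}, to which the paper's proof of (d) defers.
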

\begin{proof}
  (a): Let $\theta \in \p$ be arbitrary {but fixed} and consider $\p_1:=\{\theta\}$. Then,
  a straightforward application of {Lemma~\ref{lem:resriction-property} and Remark \ref{rem:restriction-property},
    shows that the finite dimensional linear system associated with the fixed pair $(A(\theta),b(\theta))$} is approximately
  reachable and therefore reachable.

\medskip
% \noindent
(b): This follows immediately from~(a) together with the Hautus-Lemma \cite[Lemma~3.3.7]{sontag}.

\medskip
% \noindent

(c): The restriction property, cf.~Lemma~\ref{lem:resriction-property}, applied to 
$\p_1:=\{\theta,\theta'\}$ with $\theta \neq \theta' \in \p$ implies that the parallel connection 
\begin{align*}
\left(\begin{pmatrix} A(\theta) & 0\\ 0   & A(\theta') \end{pmatrix}, 
\begin{pmatrix} b(\theta) \\ b(\theta')\end{pmatrix} \right) 
\end{align*}
is reachable. Then, again a straightforward use of the Hautus Lemma \cite[Lemma~3.3.7]{sontag}
yields $\sigma(A(\theta)) \cap \sigma(A(\theta')) = \emptyset$.

\medskip
(d): Let {$\p_n := \{\theta \in \p \;|\; \operatorname{card} \sigma(A(\theta)) = n\}$}. Obviously,
due to Rouch\'{e}'s Theorem $\p_n$ is open. Therefore, it remains to show that $\p_n$ is dense in $\p$. Assume that 
$\p_n$ is not dense. Then there exists a non-empty open subset $U \subset \p \setminus \p_n$. Define 
{$m := \max_{\theta \in U} \operatorname{card} \sigma(A(\theta)) $}. By assumption one has $m < n$.
Again, by Rouch\'{e}'s Theorem, one can show that the non-empty set 
$U_m := \{\theta \in U \;|\; \operatorname{card} \sigma(A(\theta)) = m\}$ is open and that the algebraic multiplicities
of the eigenvalues are locally constant in $U_m$. Therefore, possibly by passing to a smaller open subset, 
we can assume that the algebraic multiplicities of the eigenvalues $\lambda_1(\theta), \dots, \lambda_m(\theta)$ are
constant in $U_m$. Moreover, by part (b) we already know that the geometric multiplicities of the eigenvalues are 
{equal to one. Hence, for all $\theta_0 \in U_m$ there exists $r > 0$ such that for all
$\theta \in K_{r}(\theta_0) := \{\theta \in \p \;|\; \|\theta-\theta_0\| \leq r\}$ one can simultaneously 
transform $A(\theta)$ into Jordan canonical from.}
This follows simply from continuity and the fact that $(A(\theta) - \lambda(\theta)I_n)^k$ has constant
rank\footnote{ Note that if 
$M(\theta_0) = \left(\begin{smallmatrix} M_{11}(\theta_0) & M_{12}(\theta_0)\\ 
M_{21}(\theta_0) & M_{22}(\theta_0)\end{smallmatrix}\right)$
has rank $k < n$ with $M_{11}(\theta_0)$ invertible, then there exists $r > 0$ such that $M_{11}(\theta)$
is invertible for all $\theta \in K_{r}(\theta_0)$ and due to the constant rank condition, one has 
$$
\left(\begin{smallmatrix} M_{11}(\theta) & M_{12}(\theta)\\ 
M_{21}(\theta) & M_{22}(\theta)\end{smallmatrix}\right) 
\left(\begin{smallmatrix} M_{11}(\theta)^{-1} & - M_{11}(\theta)^{-1} M_{12}(\theta)\\ 
0 & I_{n-m}\end{smallmatrix}\right)
= \left(\begin{smallmatrix} I_k & 0\\ 
* & 0\end{smallmatrix}\right)
$$
for all $\theta \in K_{r}(\theta)$. This immediately provides us with a basis $(b_1(\theta), \dots, b_{n-m}(\theta))$ 
of the kernel of $M(\theta)$ which depends continuously on $\theta$.} 
% NO Dole\v{z}al!
on $U_m$. Finally, by Theorem~\ref{thm:decomposition}~(a), {Lemma~\ref{lem:resriction-property}
and Remark \ref{rem:restriction-property},} it suffices to consider a single Jordan block $(J(\theta),b(\theta))$ on
$K_{r}(\theta_0)$. Since we assume $m < n$, there exists a Jordan block of size greater or equal than $2$ and thus
Proposition~\ref{prop:Jordan-norm-form} together with Remark~\ref{rem:Jordan-norm-form}~(b) yields the desired contradiction.
% \bigskip
% \noindent

\medskip
(e): Assume that $\theta_0 \in \p$ is an interior point of $\p$. Moreover, according to part~(d) 
% (note that the restriction of $(A,B)$ to a closure of an open ball satisfies the assumption of having no isolated points) 
we can assume without loss of generality~that there exists $r > 0$ such that $K_r(\theta_0) \subset \p_n \subset \p$. Now, applying 
Proposition~\ref{prop:spectral-family} to the restriction of $(A,b)$ to $K_r(\theta_0)$ yields a continuous change of 
coordinates such that
\begin{equation}
T(\theta)^{-1} A(\theta) T(\theta) = 
\begin{pmatrix}
\lambda_1(\theta) & & 0\\
  & \ddots & \\
0 & & \lambda_n(\theta)
\end{pmatrix}
\quad 
\text{ and } \quad 
T(\theta)^{-1} b(\theta)= 
\begin{pmatrix}
1\\
 \vdots  \\
1
\end{pmatrix},
\end{equation}
where $\lambda_i(\theta)$ for {$i = 1, \dots, n$ are disjoint eigenvalue selections} of $A(\theta)$
on $K_r(\theta_0)=\p_0$. 
Note that pointwise reachability of $(A(\theta),b(\theta))$ guarantees that $T(\theta)^{-1} b(\theta)$ can be scaled
to $(1, \dots, 1)^\top$. By part~(c) the continuous curves $\lambda_i$ are injective on $K_r(\theta_0)$ for all $i=1,...,n$.
Thus, according to Brouwer's Invariance Theorem \cite[Ch.~V, Thm.~21.4]{newman1954planetop} the image $\lambda_i(K_r(\theta_0))$
has interior points in $\C$ for all $i=1,...,n$. Now, let us focus on $i=1$ and assume that $f_1 \in C(\p_0)$ is in the
closure of the reachable set of $(\lambda_1,1)$. Then, there exists a sequence of polynomials $p_n$ such that $p_n(\lambda_1(\theta))$
uniformly converges {to} $f_1(\theta)$ for all $\theta \in K_r(\theta_0)$ or, equivalently, such that $p_n(z)$ uniformly
converges {to} $f_1(\lambda^{-1}_1(z))$ for all $z \in \lambda_i(K_r(\theta_0))$. This, of course, implies, that 
$f_1(\lambda^{-1}_1(\cdot))$ is holomorphic in all interior points of $\lambda_i(K_r(\theta_0))$.
% \cite[20.1]{rudin1987realcomplex} or Example~\ref{ex:circle-not-uniform}
Thus, $(\lambda_1,1)$ is not uniform ensemble reachable on $K_r(\theta_0)$ and therefore $(A,b)$ is not uniformly ensemble
reachable {according to Theorem~\ref{thm:decomposition}~(a), Lemma~\ref{lem:resriction-property} and
Remark~\ref{rem:restriction-property}.}
\end{proof}

\medskip

The arguments used in the above proof further generalize to the following result. 

\begin{corollary}
Let $\p$ be homeomorphic to a compact subset of $\R^d$ with non-empty interior. Then, for $d \geq {2}$, the single
input pair $(A,b) \in C_{n,n}(\p) \times C_{n}(\p)$ is never uniformly ensemble reachable.
\end{corollary}
\begin{proof}
The case $d=2$ is shown in Theorem~\ref{thm:CC-SI-necessary-conditions}~(e). Assume without loss of generality $d = 3$
and $\p = K_1(0) \subset \R^3$, where $K_1(0)$ denotes the closed unit ball of $\R^3$. As in the proof of
Theorem~\ref{thm:CC-SI-necessary-conditions} one can show that there exists a possibly smaller open ball $K_r(\theta_0)$
such that $A$ restricted to $K_r(\theta_0)$ allows a {single-valued spectral decomposition
$\lambda_1(\theta), \dots, \lambda_n(\theta)$ with continuous and injective eigenvalue selection $\lambda_i(\theta)$.}
But this contradicts the Theorem of Borsuk-Ulam \cite[Ch.~IV, Thm~20.2]{Bredon_top_geo_1993} which {states}
that a continuous map from any sphere in $\mathbb{R}^3$ to $\R^2$ cannot be injective. 
\end{proof}

\medskip

{Next we consider sufficient conditions for uniform ensemble reachability. In the simplest case, i.e.~for
  scalar pairs $(a,b)$, we will see that the necessary conditions of Theorem \ref{thm:CC-SI-necessary-conditions} are also
  sufficient. In general, however, for non-scalar pairs $(A,b)$ additional assumptions have to be fulfilled to guarantee
  uniform ensemble reachability, cf.~Theorem~\ref{thm:suff-charakteristic}.}

\begin{proposition}\label{prop:scalar}
Let $\p\subset \mathbb{C}$ be a compact and contractible set with empty interior. Then the scalar pair
$(a,b) \in C(\p)\times C(\p)$ is uniformly ensemble reachable if and only if { $a: \p \to \C$} is injective and
$b(\theta) \neq 0$ for all $\theta \in \p$.
\end{proposition}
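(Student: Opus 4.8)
The plan is to prove the two implications separately, with the necessity direction read off from the general necessary conditions and the sufficiency direction reduced to a classical polynomial approximation theorem in the plane.

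\emph{Necessity.} This I would obtain directly from Theorem~\ref{thm:CC-SI-necessary-conditions}. If $(a,b)$ is uniformly ensemble reachable, then part~(a) gives that the scalar pair $(a(\theta),b(\theta))$ is reachable for every $\theta \in \p$; for a one-dimensional system reachability is equivalent to $b(\theta)\neq 0$, which is the first condition. Part~(c) gives $\sigma(a(\theta)) \cap \sigma(a(\theta')) = \emptyset$ whenever $\theta \neq \theta'$, and since $\sigma(a(\theta)) = \{a(\theta)\}$ in the scalar case, this says precisely that $a(\theta)\neq a(\theta')$, i.e.\ that $a$ is one-to-one.

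\emph{Sufficiency.} Assume $a$ is one-to-one and $b(\theta)\neq 0$ for all $\theta$. Since $a$ is continuous and injective on the compact set $\p$, it is a homeomorphism onto its image $K := a(\p)\subset\C$, which is again compact. Recall that $(a,b)$ is uniformly ensemble reachable exactly when, for every $f\in C(\p)$ and $\e>0$, there is a polynomial $p$ with $\max_{\theta\in\p}|p(a(\theta))\,b(\theta)-f(\theta)|<\e$. Because $b$ is continuous and nowhere zero on a compact set, $M:=\max_{\theta\in\p}|b(\theta)|<\infty$ and $g:=f/b\in C(\p)$. Using
\[
|p(a(\theta))\,b(\theta)-f(\theta)| = |b(\theta)|\,\big|p(a(\theta))-g(\theta)\big| \le M\,\big|p(a(\theta))-g(\theta)\big|,
\]
the task reduces to approximating $g$ uniformly on $\p$ by functions of the form $p\circ a$. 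Transporting everything to $K$ via the homeomorphism $a$, this is the same as approximating $g\circ a^{-1}\in C(K)$ uniformly on $K$ by polynomials.

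So the whole matter comes down to whether every continuous function on $K$ is a uniform limit of polynomials, and this is where I expect the real work to lie. The relevant input is Lavrentiev's theorem (a consequence of Mergelyan's theorem): for compact $K\subset\C$ one has $C(K)=\overline{\{\text{polynomials}\}}$ if and only if $K$ has empty interior and does not separate the plane. The non-separation hypothesis I would get from the standing assumption that $\p$ is contractible: $K$ is homeomorphic to $\p$, hence contractible, so $H^1(K)=0$ and $\C\setminus K$ is connected. The genuinely delicate point is the empty-interior hypothesis; by invariance of domain an interior point of $\p$ would force an interior point of $K$, and on such a point $g\circ a^{-1}$ could be an arbitrary continuous function and hence not polynomially approximable. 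This is exactly the obstruction flagged in the necessary condition Theorem~\ref{thm:CC-SI-necessary-conditions}~(e), so I would make sure the argument uses (or the setting guarantees) that $\p$, and therefore $K$, has no interior points.

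Once Lavrentiev's theorem applies, I would pick a polynomial $p$ with $\max_{z\in K}|p(z)-(g\circ a^{-1})(z)|<\e/M$; undoing the change of variables yields $\max_{\theta}|p(a(\theta))-g(\theta)|<\e/M$ and hence $\max_{\theta}|p(a(\theta))\,b(\theta)-f(\theta)|<\e$, which is the desired approximation. In summary, the only nontrivial ingredient is the complex-approximation theorem, and the geometry of $\p$ feeds its two hypotheses: contractibility supplies the connected complement, and the absence of interior points supplies the empty interior.
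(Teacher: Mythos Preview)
Your approach is essentially identical to the paper's: necessity via Theorem~\ref{thm:CC-SI-necessary-conditions}(a),(c), sufficiency by transporting the problem to $K=a(\p)$ and applying Mergelyan/Lavrentiev. The paper simply normalizes $b\equiv 1$ where you divide by $b$, and the paper cites Mergelyan directly where you invoke Lavrentiev; these are cosmetic differences.

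Your hesitation about the empty-interior hypothesis is well founded, and in fact sharper than the paper's treatment. The paper asserts without argument that $a(\p)$ ``has empty interior and does not separate the complex plane,'' but the stated hypotheses (compact and contractible) do \emph{not} guarantee empty interior: $\p=\overline{\mathbb{D}}$ with $a(\theta)=\theta$, $b\equiv 1$ satisfies all hypotheses of the proposition, yet $(a,b)$ is not uniformly ensemble reachable because uniform limits of polynomials on $\overline{\mathbb{D}}$ are holomorphic in the interior. So the sufficiency direction, as stated, requires the additional assumption that $\p$ has empty interior (precisely Theorem~\ref{thm:CC-SI-necessary-conditions}(e)); the paper tacitly uses it, and you are right to flag it explicitly. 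Your argument for non-separation via contractibility of $K$ and Alexander duality is fine and again matches what the paper asserts without proof.
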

% \marginpar{{A example/Lemma of this type should be given/sketch in the introduction and
% elaborated near/after Proposition \ref{prop:scalar}}}
%
%
\begin{proof}
The necessity part follows from Theorem~\ref{thm:CC-SI-necessary-conditions}~(a) and~(c). To show sufficiency
we assume without loss of generality $b\equiv1$. 
% We treat the discrete-time case, i.e. the dynamic equation is given by
%  \begin{align*}
%   x_{t+1}(\theta)= a(\theta)x_t(\theta)+u_t,\qquad x_0(\theta) \equiv 0.
%  \end{align*}
% Then the solution at time $T>0$ is $\varphi(T,\theta,u)= \sum_{k=0}^{T-1} a(\theta)^k u_{T-1-k}$, which is a polynomial in $a(\theta)$, i.e. $\varphi(T,\theta,u)=p(a(\theta))$. 
Let $f \in C(\p,\C)$ and $\e>0$ be given. It suffices to prove that there is a polynomial $p$ such that 
\begin{align}\label{eq:scalar}
\sup_{\theta \in \p} |p(a(\theta)) - f(\theta)|  < \e.
\end{align}
Since $\p$ is compact, injectivity and continuity of $a$ imply that $a\colon \p \to a(\p) \subset\C$ is a
homeomorphism. Therefore, we conclude that $a(\p)$ is also contractible and thus by \cite[Prop.~4.2.8]{roe_winding_2015}
its complement $\C\setminus a(\p)$ is connected. Moreover, Brouwer's Theorem \cite[Ch.~V, Thm.~21.4]{newman1954planetop} shows that the interior
of $a(\p)$ with respect to $\C$ is empty. Then, by Mergelyan's Theorem \cite[Theorem~20.5]{rudin1987realcomplex}
there is a polynomial $p$ such that 
 \begin{align*}
\sup_{z\in a(\p)} |f(a^{-1}(z))-p(z)| < \e
\end{align*}
and thus \eqref{eq:scalar} follows.
\end{proof}

\medskip

Recall that for matrices {which depend continuously} on a parameter $\theta$ a continuous transformation
to the Jordan canonical form is in general not available, cf. \cite[$\S$~5.3 in Ch.~II]{kato}. However, the controllability 
canonical form of a pair $(A,b) \in C_{n,n}(\p)\times C_{n}(\p)$ which is pointwise reachable can be achieved continuously
as the subsequent statement shows.

\begin{lemma}[Canonical form]\label{lem:canonical} 
Let $\p \subset \C$ be compact and suppose that $\big(A(\theta),b(\theta)\big)$ is reachable for all
$\theta \in \p$. Then 
\begin{equation}
T(\theta) := \big(b(\theta), A(\theta)b(\theta), \dots, A^{n-1}(\theta)b(\theta)\big)
\end{equation}
is invertible for all $\theta$, its inverse $T(\theta)^{-1}$ depends continuously on $\theta$, and one has
\begin{equation}\label{eq:canonical}
A_c(\theta)=
T(\theta)^{-1}A(\theta)T(\theta) = 
\left(\begin{array}{ccccc}
   0 & \hdots & \hdots & 0 & a_0(\theta)\\
   1 & \ddots & & \vdots & a_1(\theta)\\
   0 &\ddots & \ddots & \vdots & \vdots\\
   \vdots & \ddots & \ddots & 0 & \vdots\\
   0 & \hdots & 0 & 1 & a_{n-1}(\theta)
\end{array}\right)
\quad\text{{and}}\quad 
T(\theta)^{-1}b(\theta)=e_1=
 \begin{pmatrix}
1\\
0\\
 \vdots \\
0
\end{pmatrix}\,,
 \end{equation}
where {$-a_k(\theta)$} are the coefficients of the characteristic polynomial of $A(\theta)$,
i.e.~{$\chi_{A(\theta)}(z) = z^n - (a_{n-1}(\theta)z^{n-1} + \cdots + a_0(\theta))$}. Moreover, the
pair $(A,b)$ is uniformly ensemble reachable if and only if $(A_c,e_1)$ is uniformly ensemble reachable.
\end{lemma}
\begin{proof}
By the Kalman rank condition \cite[Sec.~3.2, Thm.~3]{sontag} the pair $(A(\theta),b(\theta))$ is reachable if
and only if the matrix $T(\theta)$ has rank $n$, i.e. $T(\theta)$ is invertible. The continuity
of $\theta \mapsto T(\theta)^{-1}$ follows immediately from the continuity of the inversion map
% $T \mapsto T^{-1}$
{on $GL_n(\C)$ and \eqref{eq:canonical} is an immediate consequence of the Cayley-Hamilton Theorem,
  cf.~\cite[Lemma~5.1.3]{sontag}.}

To see the second claim, suppose that $(A_c,e_1)$ is uniformly ensemble reachable. Since $T(\cdot)$ is continuous
and $\mathbf{P}$ is a compact we define $c := \sup_{\theta\in\textbf{P}}\| T(\theta)\|<\infty$. So, for any
$p\in\mathbb{C}[z]$ and $f\in C_n(\textbf{P})$ one has
\begin{align*} {
 \sup_{\theta \in \p} \| p \left(A(\theta)\right)  \, b(\theta)  - f(\theta) \|               
  \leq c \sup_{\theta\in\textbf{P}}\|p(A_c(\theta))e_1-T(\theta)^{-1}f(\theta)\|}.
 \end{align*}
 As $T$ and $T^{-1}$ are continuous, we have {$\mathcal{M}_{T^{-1}}C_n(\textbf{P}) = C_n(\textbf{P})$}
 and thus,
uniform ensemble reachability of $(A_c,e_1)$ implies uniform ensemble reachability of $(A,b)$. 
The converse implication follows by the same reasoning. Alternatively, one can simply argue that $(A,b)$ and
$(A_c,e_1)$ are state space equivalent via the (continuously invertible) multiplication operator $\mathcal{M}_{T}$.   
\end{proof}

\medskip

The following statement extends Proposition \ref{prop:scalar} to non-scalar single input pairs. An additional 
assumption on the characteristic polynomials of $A(\theta)$ provides a sufficient condition for uniform ensemble
reachability which is a generalization of \cite[Thm.~2.1]{scherlein2014open}. But in contrast to the scalar case
this condition is no longer necessary.

\begin{theorem}\label{thm:suff-charakteristic}
  Let $\p \subset \C$ be compact and contractible and let the pair $(A,b) \in C_{n,n}(\p) \times C_{n}(\p)$ satisfy
  the necessary conditions of Theorem~\ref{thm:CC-SI-necessary-conditions}. Then, $(A,b)$ is uniformly ensemble
  reachable if the characteristic polynomials of $A(\theta)$ take the form
$z^n - (a_{n-1}z^{n-1} + \cdots +a_1z + a_0(\theta))$ for some $a_{n-1},...,a_1 \in \C$ and $a_0 \in {C}(\p)$. 
\end{theorem}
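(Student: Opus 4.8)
The plan is to reduce the problem to the scalar case of Proposition~\ref{prop:scalar}, exploiting that only the constant coefficient of the characteristic polynomial depends on $\theta$. Since $\big(A(\theta),b(\theta)\big)$ is reachable for every $\theta$ by condition~(a) of Theorem~\ref{thm:CC-SI-necessary-conditions}, Lemma~\ref{lem:canonical} applies, so it suffices to prove that the companion pair $(A_c,e_1)$ is uniformly ensemble reachable. Here $A_c(\theta)$ is the companion matrix of $\chi_{A(\theta)}(z)=z^n+a_{n-1}z^{n-1}+\cdots+a_1z+a_0(\theta)$, and $e_1,A_c(\theta)e_1,\dots,A_c(\theta)^{n-1}e_1$ equal the standard basis $e_1,\dots,e_n$. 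I would write $\Phi(z):=z^n+a_{n-1}z^{n-1}+\cdots+a_1z$ for the fixed part, so that $\chi_{A(\theta)}(z)=\Phi(z)+a_0(\theta)$ carries all the $\theta$-dependence in $a_0$. I would first record that $a_0$ is one-to-one: if $a_0(\theta)=a_0(\theta')$ with $\theta\neq\theta'$, then $\chi_{A(\theta)}=\chi_{A(\theta')}$, hence $\sigma(A(\theta))=\sigma(A(\theta'))\neq\emptyset$, contradicting condition~(c). Consequently $(-a_0,1)\in C_{1,1}(\p)$ is a scalar pair with $-a_0$ one-to-one on the compact contractible set $\p$, and Proposition~\ref{prop:scalar} yields: for every continuous $g\colon\p\to\C$ and every $\delta>0$ there is a polynomial $Q$ with $\sup_{\theta\in\p}\big|Q(-a_0(\theta))-g(\theta)\big|<\delta$.

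The key step is an algebraic decoupling of the $n$ coordinates. Since $e_1$ is cyclic for $A_c(\theta)$, Cayley--Hamilton gives the vector-space isomorphism $\C^n\cong\C[z]/\big(\chi_{A(\theta)}(z)\big)$ with $e_j\leftrightarrow z^{j-1}$, under which $p(A_c(\theta))e_1$ corresponds to the reduced representative $p(z)\bmod\chi_{A(\theta)}(z)$ for every polynomial $p$. Because $\chi_{A(\theta)}=\Phi+a_0(\theta)$, one has $\Phi(z)\equiv-a_0(\theta)\pmod{\chi_{A(\theta)}}$, so for any polynomial $Q$ and any index $i\in\{0,\dots,n-1\}$ the element $z^iQ(\Phi(z))$ reduces to $Q(-a_0(\theta))\,z^i$ (note $z^i$ is already reduced). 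In coordinates this reads
\[
\big(z^iQ(\Phi)\big)(A_c(\theta))\,e_1=Q(-a_0(\theta))\,e_{i+1},
\]
so polynomials of this shape produce the scalar function $Q(-a_0(\theta))$ in a single prescribed coordinate and nothing in the others.

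To finish, given $f=(f_1,\dots,f_n)\in C_n(\p)$ and $\e>0$, I would apply the scalar approximation with $g=f_k$ to obtain polynomials $Q_1,\dots,Q_n$ satisfying $\sup_\theta|Q_k(-a_0(\theta))-f_k(\theta)|<\delta$ for each $k$, and then set $p(z):=\sum_{k=1}^{n}z^{k-1}Q_k(\Phi(z))$. By the decoupling identity, $p(A_c(\theta))e_1=\sum_{k=1}^n Q_k(-a_0(\theta))\,e_k$ matches $f(\theta)=\sum_k f_k(\theta)e_k$ coordinatewise up to error $\delta$; since all norms on $\C^n$ are equivalent, choosing $\delta$ small enough forces $\|p(A_c)e_1-f\|_{C_n(\p)}<\e$, which establishes uniform ensemble reachability of $(A_c,e_1)$ and hence, by Lemma~\ref{lem:canonical}, of $(A,b)$. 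I expect the main obstacle to be the decoupling identity itself: correctly setting up the isomorphism $\C^n\cong\C[z]/(\chi_{A(\theta)})$, verifying $\Phi\equiv-a_0(\theta)$ modulo $\chi_{A(\theta)}$, and checking that $z^i$ stays reduced for $i<n$ so the coordinates can be addressed independently. Once this is in place, the entire analytic content — uniform polynomial approximation of arbitrary continuous functions on $-a_0(\p)$ — is absorbed into Proposition~\ref{prop:scalar} (which rests on Mergelyan's theorem), and the passage to the full vector target is routine.
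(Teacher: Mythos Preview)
Your proof is correct and follows essentially the same approach as the paper: reduce to companion form via Lemma~\ref{lem:canonical}, exploit that $\Phi(A_c(\theta))=-a_0(\theta)I$ (the paper writes $g(A(\theta))=a_0(\theta)I$ with the opposite sign convention) to decouple the $n$ coordinates via polynomials of the shape $\sum_k z^{k-1}Q_k(\Phi(z))$, and then approximate each coordinate separately. The only cosmetic difference is that you package the scalar approximation step as an appeal to Proposition~\ref{prop:scalar}, whereas the paper unwinds that proposition and invokes Mergelyan's theorem directly.
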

\begin{proof}
  By Lemma~\ref{lem:canonical} we can assume without loss of generality~that $(A(\theta),b(\theta))$ is in controllability
  form, i.e.
\begin{align*}
A(\theta)=  \left(\begin{array}{ccccc}
   0 & \hdots & \hdots & 0 & a_0(\theta)\\
   1 & \ddots & & \vdots & a_1 \\
   0 &\ddots & \ddots & \vdots & \vdots\\
   \vdots & \ddots & \ddots & 0 & \vdots\\
   0 & \hdots & 0 & 1 & a_{n-1} 
  \end{array}\right) \qquad b(\theta)  = e_1.
\end{align*}
%
%
% \marginpar{{Maybe, one should introduce a handy terminology for
% polynomials which satisfy Proposition \ref{prop:suff-charakteristic}.
% e.g. $0$-independent if all roots are independent to $\theta$, and general
% $k$-indipendent if the roots of the $k$-th derivative are independent to
% $\theta$; in this sense the polynomial of Proposition
% \ref{prop:suff-charakteristic} is $1$-independent.}}
%
%
To show the claim, we verify that for $\e>0$ and $f \in C_n(\mathbf{P})$ there is a polynomial $p$ so that
\begin{align*}
\sup_{\theta \in \mathbf{P}} \| p(A(\theta))\,e_1 - f(\theta) \| < \e.
\end{align*}
To this end, let $g(z):=z^n - (a_{n-1}z^{n-1}+ \dots +a_1 z)$ and define
\begin{align*}
 p(z):= \sum_{k=1}^n p_k\big(g(z)\big)z^{k-1},
\end{align*}
with $p_k \in \C[z]$ to be specified later. As $A(\theta)^k e_1 = e_{k+1}$ for $k=0,\dots,n-1$ and 
$g(A(\theta)) = a_0(\theta)I$ we obtain
\begin{align*}%\label{eq:barbaras_trick}
p(A(\theta)) e_1
 =\sum_{k=1}^n p_k(g(A(\theta)))A(\theta)^{k-1}e_1
 =\sum_{k=1}^n p_k(g(A(\theta))e_k
 =\begin{pmatrix}
   p_1(a_0(\theta))\\
   \vdots\\
   p_n(a_0(\theta))
  \end{pmatrix}.
% 
%  =\sum_{k=1}^n f_k(a_0(\theta))e_k.
\end{align*}
Consequently, it remains to show that for appropriate choices of $p_k$ one has
\begin{align*}
\sup_{\theta \in \mathbf{P}} | p_k(a_0(\theta)) - f_k(\theta) | < \e \qquad \text{ for } k =1,...,n.
\end{align*}
The injectivity of the spectral map together with 
$\chi_{A(\theta)}(z)=z^n-(a_{n-1}z^{n-1}+ \dots + a_1 z + a_0(\theta))$ {implies that $a_0:\p \to \C$
  is one-to-one  and hence a homeomorphism}. Therefore, we conclude as in the proof of Prop.~\ref{prop:scalar} that
$\C\setminus a(\p)$ is connected and $a(\p)$ has empty interior. Thus, again by Mergelyan's Theorem
\cite[Theorem~20.5]{rudin1987realcomplex} there are polynomials $p_k$ such that 
\begin{align*}
\sup_{z\in a_0(\mathbf{P})}|p_k(z)-  f_k(a_0^{-1}(z))|<\varepsilon\qquad \text{ for } k =1,...,n.
\end{align*}
This shows the assertion.
\end{proof}

\medskip

{Now let us consider the following ensemble
 % An essential impact of Theorem~\ref{thm:parallel_multi} is that it allows for a combination of existing
% sufficient conditions. For instance, the pair
 \begin{align}\label{eq:combined}
A(\theta)= \begin{pmatrix} 0 & -\theta^2& 0\\ 
                         1 &  0   & 0\\
                         0 &  0& \theta^2+1 
                          \end{pmatrix},  
\qquad b(\theta) =\begin{pmatrix} 1 \\ 0 \\ 1\end{pmatrix},\qquad \p =[0,1]
 \end{align}%\marginpar{why $\theta^2$ and not $\theta$ in position 33}
 A brute-force analysis shows that \eqref{eq:combined} is uniformly ensemble reachable. This conclusion, however,
 cannot be drawn solely by Theorem~\ref{thm:suff-charakteristic}. Yet, combining Theorem~\ref{thm:parallel_multi}
 with Theorem~\ref{thm:suff-charakteristic} yields the desired result. In general, Theorems~\ref{thm:parallel_multi}
 and~\ref{thm:decomposition} allow for the following approach: First, determine a strictly disjoint continuous
 spectral decomposition of $A$ or equivalently of the corresponding matrix multiplication operator $\mathcal{M}_A$;
 then investigate the resulting subsystems (for instance via Theorem~\ref{thm:suff-charakteristic})} and finally glue
together the  individually pieces by Theorem~\ref{thm:parallel_multi} and~\ref{thm:decomposition}. This leads to the
following sufficient conditions.

\begin{theorem}\label{thm:CC-SI-sufficient-conditions}
  Let $\p \subset \C$ be compact and contractible and let the pair $(A,b) \in C_{n,n}(\p)\times C_n(\p)$ satisfy the
  necessary conditions of Theorem~\ref{thm:CC-SI-necessary-conditions}. Then $(A,b)$ is uniformly ensemble reachable
  if the following conditions are satisfied:
\begin{enumerate}
% \item[(a)]
% The pairs $(A(\theta),b(\theta))$ are reachable for all $\theta \in \p$.
\item[(a)]
There exists a {strictly} disjoint continuous spectral decomposition
${\Gamma_1,...,\Gamma_k}$ with non-separating partial spectral sets $\Gamma_i(\p), {i=1,...,k}$.
%\marginpar{Why not $\Gamma_i(\p)$? -- einheitlich! cf. Thm.~2}
\item[(b)]
The {characteristic polynomials of the associated subsystems $(A_i(\theta),b_i(\theta))$} take the form 
$z^{n_i} -\big( a_{i,n_i - 1}z^{n_i - 1} + \dots + a_{i,1} z + a_{i,0}(\theta)\big){, i=1,...,k}$ .
% \item[(d)]
% The maps $\theta \mapsto a_{i,0}(\theta)$ are injective. 
\end{enumerate}
\end{theorem}
\begin{proof}
By conditon~(a) we can apply Proposition~\ref{prop:spectral-family} and conclude the existence of a continuous
family $T(\theta)$ of invertible transformations such that $ A(\theta)$ becomes block-diagonal,
cf.~\eqref{eq:block-diagonal}. Then, by Theorem~\ref{thm:decomposition}~(b) it is sufficient to verify that
each associated subsystem is uniformly ensemble reachable. This, however, is guaranteed by assumptions (b) 
together with Theorem~\ref{thm:suff-charakteristic}.
% necessity conditions in Theorem~\ref{thm:CC-SI-necessary-conditions} for each associated block system $(A_k,b_k)$. 
% The assertion then follows from condition~(c) in turn with Theorem~\ref{thm:suff-charakteristic}.
\end{proof}

\medskip
The subsequent statement provides an extension of \cite[Theorem~1]{helmke2014uniform}, where $\p$ was assumed
to be a compact real interval. It follows immediately from Theorem~\ref{thm:CC-SI-sufficient-conditions} and
Lemma \ref{lem1:spectral-family}~(c).

\begin{corollary}\label{cor:scl_si}
Let $\p \subset \C$ be a compact, contractible and locally path-connected set with empty interior. Then, 
the pair $(A,b) \in C_{n,n}(\p)\times C_n(\p)$ is uniformly ensemble reachable if the following conditions are satisfied.
\begin{enumerate}
\item[(a)]
The pairs $(A(\theta),b(\theta))$ are reachable for all $\theta \in \p$.
\item[(b)]
The eigenvalues of $A(\theta)$ are simple for all $\theta \in \p$.
\item[(c)]
The spectral map is injective.
\end{enumerate}
\end{corollary}

%%%%%%%%%%%%%%%%%%%%%%%%%%%%%%%%%%%%%%%%%%%%%%%%%%%%%%%%%%%%%%%%%%%%%%%%%%%%%%%%%%%%%%%%
\subsection{Multi-input parameter-dependent systems}
%%%%%%%%%%%%%%%%%%%%%%%%%%%%%%%%%%%%%%%%%%%%%%%%%%%%%%%%%%%%%%%%%%%%%%%%%%%%%%%%%%%%%%%%

  \medskip
In this subsection we investigate parameter-dependent systems with more than one input. As in the single-input case we begin with necessary conditions for uniform ensemble reachability for pairs $(A,B) \in C_{n,n}(\p)\times  C_{n,m}(\p)$ {and recap the following result, cf.~\cite[Lemma~1]{helmke2014uniform}}. 

\begin{proposition}\label{thm:nec_multi}
Let $\p \subset \C$ be compact. If the pair $(A,B) \in C_{n,n}(\p)\times  C_{n,m}(\p)$  is uniformly ensemble reachability the following necessary conditions hold:
\begin{enumerate}
\item[(a)] The pairs $(A(\theta),B(\theta))$ are reachable for all $\theta \in \p$.
\item[(b)] The spectral map $\operatorname{spec} A$ is at most $m$-to-$1$, i.e.~for $s\geq m+1$ {and}
  pairwise distinct $\theta_1, \dots, \theta_s$ one has
\begin{align*}%\label{mi_nec_cond}
\sigma (A(\theta_1))\cap \cdots \cap \sigma (A(\theta_s))=\emptyset.
\end{align*}
\end{enumerate}
\end{proposition}

\medskip
In case the pair $(A,B) \in {C_{n,n}(\p)\times  C_{n,m}(\p)} $ admits the special form $(\theta A,B)$,
where $A\in \C^{n \times n}$ and $B \in  \C^{n \times m}$, the above necessary and sufficient conditions can be stated
more precisely, cf.~\cite[{Theorem~1}]{li_tac_2016} and \cite[{Theorems~5 and~6}]{schonlein2016controllability} { in terms of the rank of the matrices $A$ and $B$. These conditions depend on whether $\p$ contains the origin or not. In particular, if $0 \in \p$ then any single input pair $(\theta A, b)$ is not uniformly ensemble reachable. In the following, based on Section~\ref{subsec:cascade-structures}, we investigate pairs $(A(\theta),B(\theta))$ with other special structures. We start with} upper triangular pairs of continuous matrix families $A{(\theta)}$ and $B{(\theta)}$, i.e.
\begin{align}\label{eq:lambda_lower_triangular}
A(\theta) = 
\begin{pmatrix} 
a_{11}(\theta) &  \cdots       &  a_{1n}(\theta)  \\ 
& \ddots &              \vdots      \\
&  & a_{nn}(\theta)
\end{pmatrix}   
\quad \text{ and } \quad
B(\theta) =
\begin{pmatrix} 
b_{11}(\theta) &    \cdots     &     b_{1n}(\theta) \\ 
& \ddots &           \vdots \\
 &  & b_{nn}(\theta)
\end{pmatrix}.
\end{align}
In this case we obtain the following sufficient condition for uniform ensemble reachability.

\begin{proposition}\label{prop:A_B_triangular}
Let $\textbf{P}\subset \mathbb{C}$ be a compact and contractible set with empty interior. The pair 
$(A,B)\in C_{n,n}(\p)\times  C_{n,n}(\p)$, defined in \eqref{eq:lambda_lower_triangular} is uniformly ensemble reachable
if $\operatorname{rank}B(\theta)=n$ for all $\theta \in \p$ and $a_{ii}$ is injective for all $i=1,...,n$.
\end{proposition}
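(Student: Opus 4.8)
The plan is to exploit the upper-triangular structure of the pair and reduce uniform ensemble reachability to the scalar case already settled in Proposition~\ref{prop:scalar}, and then reassemble the pieces via the triangular reachability result Proposition~\ref{prop:technical_MATCOM}. Concretely, I would regard the pair $(A,B)$ in \eqref{eq:lambda_lower_triangular} as an upper-triangular array of $1\times 1$ blocks, so that the diagonal subpairs are exactly the scalar pairs $(a_{ii},b_{ii})\in C_{1,1}(\p)$ for $i=1,\dots,n$, while the entries $a_{ij}$ and $b_{ij}$ with $i<j$ play the role of the off-diagonal blocks, which Proposition~\ref{prop:technical_MATCOM} permits to be arbitrary.

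First I would verify that each scalar diagonal pair is uniformly ensemble reachable. Since $B(\theta)$ is upper triangular, its determinant equals $\prod_{i=1}^{n} b_{ii}(\theta)$; the hypothesis $\operatorname{rank}B(\theta)=n$ forces this product to be nonzero, hence $b_{ii}(\theta)\neq 0$ for every $\theta\in\p$ and every $i$. Combined with the standing assumption that each $a_{ii}$ is one-to-one and with $\p$ compact and contractible, Proposition~\ref{prop:scalar} applies directly and yields that $(a_{ii},b_{ii})$ is uniformly ensemble reachable on $C_1(\p)$ for all $i=1,\dots,n$.

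The second and final step is to invoke Proposition~\ref{prop:technical_MATCOM} with $N=n$ and all block sizes equal to one: since the diagonal pairs $(a_{ii},b_{ii})$ are ensemble reachable on $C_1(\p)$, the upper-triangular pair $(A,B)$ is ensemble reachable on $C_n(\p)$, which is precisely uniform ensemble reachability. No condition on the off-diagonal data $a_{ij},b_{ij}$ (for $i<j$) is required, exactly because Proposition~\ref{prop:technical_MATCOM} is a one-directional sufficiency statement that tolerates arbitrary coupling above the diagonal.

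The step requiring the most care is the bookkeeping that matches the hypotheses to the two cited results rather than any genuine analytic obstacle: I would confirm that $C_n(\p)$ and $C_1(\p)$ are the relevant separable $\mathcal{M}$-Banach spaces, that compactness and contractibility of $\p$ transfer unchanged to each scalar factor so that Proposition~\ref{prop:scalar} is legitimately applicable, and that the implication $\operatorname{rank}B(\theta)=n \Rightarrow b_{ii}(\theta)\neq 0$ genuinely uses triangularity of $B$. Once these points are in place the argument is immediate, and I expect no difficulty beyond the scalar approximation (Mergelyan's theorem) already packaged inside Proposition~\ref{prop:scalar}.
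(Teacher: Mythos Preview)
Your proposal is correct and follows exactly the same route as the paper: reduce to the diagonal scalar pairs $(a_{ii},b_{ii})$ via Proposition~\ref{prop:technical_MATCOM}, observe that $\operatorname{rank}B(\theta)=n$ forces $b_{ii}(\theta)\neq 0$, and then invoke Proposition~\ref{prop:scalar}. Your additional remarks on bookkeeping (the determinant argument for the nonvanishing of $b_{ii}$ and the check that the off-diagonal data are irrelevant) only make explicit what the paper leaves implicit.
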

\begin{proof}
By Proposition~\ref{prop:technical_MATCOM} it is sufficient to consider the diagonal pairs 
$(a_{ii}, b_{ii}) \in C(\p)\times C(\p)$. As  $\operatorname{rank}B(\theta)=n$ for all $\theta \in \p$ it 
follows that  $b_{ii}(\theta)\neq 0$ for all $\theta \in \p$ and for $i=1,...,n$. Since the functions
$a_{ii}$ are injective for $i=1,...,n$ we can apply  Proposition~\ref{prop:scalar} and the claim follows.
\end{proof}

\medskip
\begin{remark}\label{rem:upper-triangular-not-nec}
  The converse of the Propositions~\ref{prop:A_B_triangular} is false in general. To see this, let
  {$\p := [0,1]$}, $n=2$ and take continuous injective functions {$a_{11}$ and $a_{22}$} and 
$b = \binom{b_1}{b_2}\in C_2(\p)$ such that {$a_{11}(\p) \cap a_{22}(\p) = \emptyset $} and $b_1\neq 0$ and $b_2\neq 0$. Then, according to Corollary \ref{cor:scl_si}, the pair
%\marginpar{$a_{11},a_{22}$ auch in abgesetzter Formel!}
\begin{align*}
\left(
\begin{pmatrix} 
a_{11} &  0  \\ 
0 & a_{22}
\end{pmatrix}   
,
\begin{pmatrix} 
b_{1} \\
b_{2}
\end{pmatrix}
\right)
\end{align*}
is uniformly ensemble reachable. Then, the pair 
\begin{align*}\left(
\begin{pmatrix} 
a_{11} &  0  \\ 
0 & a_{22}
\end{pmatrix}
,
\begin{pmatrix} 
0 &b_{1} \\
0& b_{2}
\end{pmatrix}\right)
\end{align*}
is clearly also uniformly ensemble reachable. However, the pair $(a_1,0)$ is obviously not uniformly  ensemble reachable. 
\end{remark}

\medskip
Next we consider a special case of \eqref{eq:lambda_lower_triangular} where $A$ takes the generalized Jordan structure 
\begin{align}\label{eq:Jordan-block}
J(\theta)= 
\begin{pmatrix} 
\lambda(\theta) &   \lambda_{12}(\theta)  &   \cdots     &     \lambda_{1n}(\theta)               \\ 
& \ddots &        \ddots     &        \vdots          \\
&  &         \ddots   &       \lambda_{n-1,n}(\theta)            \\
 &  & & \lambda(\theta)
\end{pmatrix} 
                                      \quad \text{ and } \quad
    B(\theta) \equiv B {\in \C^{n\times m}}\,,                                 
\end{align}
{i.e.~$B$ is constant.} The following result characterizes uniform ensemble reachability of the
pair $(J,B)$ and extends \cite[Proposition~3]{li_tac_2016} where, $\lambda (\theta)=\theta\, \lambda$ and
$\lambda_{i,i+1}(\theta)= \theta$ for all $i=1,...,n-1$ and $\lambda_{i,j}\equiv 0$ otherwise.

\medskip

\begin{proposition}\label{prop:Jordan-norm-form}
Let $\textbf{P}\subset \mathbb{C}$ be compact, contractible 
and let $\operatorname{card} \p > 1$. Then, the pair $(J,B)\in C_{n,n}(\p)\times {\C^{n\times m}}$, with
$m\geq n$ defined in \eqref{eq:Jordan-block} is uniformly ensemble reachable if and only if $\operatorname{rank}B=n$
and  $\lambda$ is injective.
\end{proposition}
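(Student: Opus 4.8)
The plan is to exploit the single-eigenvalue structure of $J$. Writing $J(\theta)=\lambda(\theta)I+N(\theta)$ with $N(\theta):=J(\theta)-\lambda(\theta)I$ strictly upper triangular, so that $N(\theta)^n=0$, the functional calculus gives for every scalar polynomial $p$
\[
p\big(J(\theta)\big)=\sum_{l=0}^{n-1}\tfrac{1}{l!}\,p^{(l)}\big(\lambda(\theta)\big)\,N(\theta)^l .
\]
Since $B$ is a constant matrix with $\operatorname{rank}B=n$, i.e.\ invertible, its columns $b_1,\dots,b_n$ form a basis, and the reparametrisation $q:=B\,(p_1,\dots,p_n)^{\trans}$ ranges over all $n$-tuples of polynomials as $(p_1,\dots,p_n)$ does, with $q^{(l)}=B\,(p_1^{(l)},\dots,p_n^{(l)})^{\trans}$. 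Hence the reachable functions take the compact form
\[
\sum_{j=1}^n p_j(J)\,b_j \;=\; \sum_{l=0}^{n-1}\tfrac{1}{l!}\,N(\theta)^l\,q^{(l)}\big(\lambda(\theta)\big),
\]
and uniform ensemble reachability is equivalent to the density in $C_n(\p)$ of all such expressions with $q$ an arbitrary polynomial tuple.

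For sufficiency I would approximate a target $f=(f_1,\dots,f_n)^{\trans}\in C_n(\p)$ by back-substitution from the bottom. Because $N$ is strictly upper triangular, the $i$-th component of the displayed expression equals $q_i(\lambda(\theta))$ plus a term involving only $q_{i+1},\dots,q_n$, their derivatives, and the continuous entries of the powers $N^l$. Injectivity of $\lambda$ makes it a homeomorphism of the compact set $\p$ onto $K:=\lambda(\p)$; on the sets under consideration $K$ has empty interior and connected complement, so Mergelyan's theorem \cite[Theorem~20.5]{rudin1987realcomplex} applies on $K$ exactly as in the proof of Proposition~\ref{prop:scalar}. I would first pick $q_n$ with $q_n\circ\lambda\approx f_n$; then, $q_n$ being fixed, pick $q_{n-1}$ so that $q_{n-1}\circ\lambda$ approximates $f_{n-1}$ minus the already-determined continuous correction coming from $q_n'$; and so on up to $q_1$. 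At each step the target is a continuous function on $K$, so Mergelyan yields the required polynomial, and the entries $\lambda_{i,j}$ strictly above the superdiagonal are harmlessly absorbed into these continuous corrections.

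For necessity, injectivity of $\lambda$ comes essentially for free from the bottom component. There $\sum_j p_j(J)b_j$ reduces to $\big(\sum_j B_{nj}p_j\big)\circ\lambda$, a scalar expression of the form (polynomial)$\circ\lambda$ (note the $n$-th row of $B$ must be nonzero, as otherwise the bottom component vanishes identically and reachability fails at once). Thus uniform ensemble reachability forces the scalar single-input pair $(\lambda,1)$ to be uniformly ensemble reachable on $C_1(\p)$, whence by Proposition~\ref{prop:scalar} and Theorem~\ref{thm:CC-SI-necessary-conditions} both that $\lambda$ is one-to-one and that $\p$, and hence $K$, has empty interior; this last point is precisely what legitimises the use of Mergelyan above.

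The delicate point, and the one I expect to be the main obstacle, is the necessity of $\operatorname{rank}B=n$. If $\operatorname{rank}B=r<n$, the reparametrised tuple $q=B(p_1,\dots,p_n)^{\trans}$ is confined to take values in the proper subspace $V:=\operatorname{im}B$, and the difficulty is that this constraint survives the passage to the limit: on the relevant sets one cannot simultaneously approximate a continuous function and an independent one by a polynomial and its derivative. I would make this precise by exhibiting a nonzero annihilating functional. Representing continuous functionals on $C_n(\p)$ by regular $\C^n$-valued Borel measures, the deficiency of $B$ forces the top components to be reachable only through $\lambda$-derivatives of lower ones; integrating by parts along a nondegenerate arc in $\p$ --- such an arc exists because $\p$ is locally connected and contractible --- should produce a vector measure that annihilates every reachable function but not all of $C_n(\p)$. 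Carrying this out rigorously is exactly where the complex-approximation input enters, and it is the crux of the argument.
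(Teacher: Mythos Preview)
Your sufficiency argument and your argument for the injectivity of $\lambda$ are both correct, though packaged differently from the paper. For sufficiency the paper simply invokes Proposition~\ref{prop:technical_MATCOM} to reduce the upper-triangular pair to its diagonal scalar pairs $(\lambda,1)$ and then applies Proposition~\ref{prop:scalar}; your explicit back-substitution via Mergelyan is essentially what that proposition amounts to once unwound for this particular structure, with the merit of making the role of the derivatives $q^{(l)}$ visible. For the injectivity of $\lambda$ the paper instead restricts to a two-point set $\{\theta_1,\theta_2\}$ and applies the Hautus test to the resulting $2n\times 2n$ parallel connection; your bottom-row argument is an equally valid alternative.

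The genuine gap is in the necessity of $\operatorname{rank}B=n$, which you correctly flag as the crux but do not carry out. Your dual formulation---an annihilating vector measure obtained by integrating by parts along an arc---points in the right direction, but the paper executes this in primal form with an explicit unreachable target, and the concrete choice of target is what makes the argument close. In the $n=2$ case with $\operatorname{rank}B=1$, say $B=\begin{pmatrix}b_1&0\\1&0\end{pmatrix}$, every reachable function takes the form
\[
\begin{pmatrix} b_1\,p(\lambda)+\lambda_{12}\,p'(\lambda)\\ p(\lambda)\end{pmatrix}
\]
for a \emph{single} polynomial $p$. The paper takes the target $f=\binom{c\,\lambda_{12}}{0}$ with $c$ large. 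Any $\varepsilon$-approximant then forces simultaneously $|p(\lambda(\theta))|<\varepsilon$ and $|p'(\lambda(\theta))-c|<\varepsilon(1+|b_1|)/\min_\theta|\lambda_{12}(\theta)|$ on all of $\mathbf P$. Local connectedness guarantees (Lemma~\ref{lem:exists_path}) a nondegenerate arc $\gamma\subset\lambda(\mathbf P)$; integrating $p'$ along a polygonal approximation of $\gamma$ and using the fundamental theorem of calculus yields $2\varepsilon>|p(z_2)-p(z_1)|\geq(c-\text{small})\,L_\gamma$, which contradicts the choice of $c$ (with a separate argument when $L_\gamma=\infty$). This is exactly your ``integration along an arc'', done directly rather than by duality.
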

\begin{proof}
We begin with the sufficiency part. Suppose that $\lambda$ is injective and $\operatorname{rank} B=n$. Then, 
without loss of generality let $B=I$ and the claim follows from Proposition~\ref{prop:A_B_triangular}.

\medskip
Conversely, let the pair $(J,B)$ be uniformly ensemble reachable. First, suppose that $\lambda$ is not injective, 
i.e.~there are $\theta_1 \neq \theta_2 \in \mathbf{P}$ such that {$\lambda(\theta_1)=\lambda(\theta_2)=:\lambda_0$}.
From the restriction property, cf.~Lemma \ref{lem:resriction-property}, it follows that 
% Lemma~1 in \cite{helmke2014uniform} 
the finite-dimensional parallel connection
 \begin{align*}
\left(\begin{pmatrix} J(\theta_1) & 0\\ 
   0   & J(\theta_2) 
\end{pmatrix}, \begin{pmatrix} B \\ B\end{pmatrix} \right) 
\end{align*}
is reachable. On the other hand, it is easy to see that {$\binom{e_n}{-e_n}$ in the othrogonal
  complement of the column space of the matrix
\begin{align*}
\begin{pmatrix} \lambda_0 I- J(\theta_1) & 0 & B \\ 0 &\lambda_0 I- J(\theta_2)& B \end{pmatrix}
\end{align*}
and thus its rank is less or equal $2n-1$. This} yields a contradiction to the classical Hautus Lemma
\cite[Lemma~3.3.7]{sontag}.

\medskip
\noindent
To see the necessity of the rank condition, we treat without loss of generality~the case $n=2$. Suppose that $\operatorname{rank} B < 2$.
Then, after a change of coordinates in the controls, we can assume
$$
B=\begin{pmatrix} b_1 & 0\\ b_2 & 0 \end{pmatrix}\,.
$$ 
Moreover, reachability of $(J(\theta),B)$ implies $\lambda_{12}(\theta) \neq 0$ for all $\theta \in \p$ and 
$b_2 \neq 0$. Thus we can make additionally the simplifying assumption $b_2 = 1$. 
Since $\lambda$ is continuous and injective and the set $\p$ is compact {and contractible the set
  $\lambda(\p)$ is path-connected. Moreover, due to the sssumption $\operatorname{card} \p > 1$,} we can choose $\gamma$
such that it has distinct endpoints denoted by $z_1\neq z_2$.
\medskip
\noindent
We first discuss the case where the length of $\gamma$ is finite, i.e. $L_\gamma < \infty$. Let $\e>0$ and  
define $f := \binom{c \lambda_{12}}{0} \in C_2(\p)$ with
{
\begin{align}\label{eq:def-c}
c > \e \left(2\, L_{\gamma}^{-1} + \tfrac{1+|b_1|}{ \min_{\theta \in \p} |\lambda_{12}(\theta)|}\right) + 1.
\end{align}}
As the pair $(J,B)$ is uniformly ensemble reachable there is a polynomial $p$ such that
\begin{align*}
\max_{\theta \in \p}\left\| p\big(J(\theta)\big) \, 
\begin{pmatrix} b_1 \\1 \end{pmatrix} - \,\begin{pmatrix} c \lambda_{12}(\theta) \\0 \end{pmatrix} \right\| <\e. 
\end{align*}
Thus, {for 
  \begin{align*}
    \Delta = \begin{pmatrix} \Delta_1 \\ \Delta_2 \end{pmatrix} :=
    p(J) \, \begin{pmatrix} b_1 \\1 \end{pmatrix} - \,\begin{pmatrix} c \,\lambda_{12}\\0 \end{pmatrix}
\end{align*}
we have $\|\Delta\|_\infty<\e$.}
By \cite[Ch.~6.1]{horn1991topics}, this results in
\begin{align*}
\begin{pmatrix} 
  p(\lambda(\theta))\,b_1 + p'(\lambda(\theta))\, \lambda_{12}(\theta) \\ p(\lambda(\theta))
\end{pmatrix}
= 
\begin{pmatrix} 
c\, \lambda_{12}(\theta)+ \Delta_1(\theta)\\ \Delta_2(\theta)
\end{pmatrix}\, 
\end{align*}
and therefore we obtain the estimates
\begin{align*}
|p(\lambda(\theta))|< \e 
\quad \text{ and } \quad 
\big|p'(\lambda(\theta)) - c \big| \leq 
\frac{ \big|\Delta_1(\theta) - b_1 \Delta_2(\theta)\big|}{\min_{\theta \in \p} |\lambda_{12}(\theta)|}
\leq \varepsilon \frac{1+|b_1|}{ \min_{\theta \in \p} |\lambda_{12}(\theta)|} .
\end{align*}
for all $\theta \in \p$. Hence, for all $z \in \lambda(\p)$, we have
\begin{align*}
|p(z)|< \e \quad \text{ and } \quad 
\big|p'(z) - c \big| \leq  \varepsilon \frac{1+|b_1|}{ \min_{\theta \in \p} |\lambda_{12}(\theta)|}.
\end{align*}
{Now, let $K \subset \C$ be any compact set which properly contains $\operatorname{tr}\gamma$.}
Since $p'$ is uniformly continuous on {$K$ there is a  $\delta>0$ such that
\begin{align}\label{eq:p'-unif_cont}
|p'(z)-p'(w)|<1 \quad \text{for all} \; z,w \in K \; \text{with} \; |z-w|< \delta.
\end{align}}
By standard arguments there is a sequence of polygons {$(\gamma_N:[0,1] \to K)_{N \in \N}$}
such that $\gamma_N(0)=z_1$ and $\gamma_N(1)=z_2$ for all $N\in \N$,
\begin{align*}
\|\gamma_N - \gamma\|_\infty < \delta
\quad \text{ and } \quad 
L_{\gamma_N} \to L_{\gamma} \text{ for } N \to \infty\,.
\end{align*}
{This together with \eqref{eq:p'-unif_cont} implies
  $ |p'(\gamma_N(t)) - p'(\gamma(t)) |< 1$ for all $t\in [0,1]$ and all $N \in \N$ and thus} one
has\footnote{{In general, $\gamma_N$ is only piecewise continuously differentiable
but for simplicity we assume that $\gamma_N$ is $C^1$.}}
\begin{align*}
2 \e &> \big|p(z_2) - p(z_1)\big| = \left| \int_{\gamma_N} p'(\xi) \di{\xi}\right|
= \left| \int_{0}^1 \Big( p'(\gamma_N(t)) - p'(\gamma(t)) + p'(\gamma(t)) - c +c \Big) \dot \gamma_N(t) \di{t} \right| \\
&\geq c\,L_{\gamma_N} -  \int_{0}^1 |p'(\gamma(t)) - c| \, |\dot \gamma_N(t)| \di{t} 
-  \int_{0}^1 |p'(\gamma_N(t)) - p'(\gamma(t))| \, |\dot \gamma_N(t)| \di{t} \\
& \geq \left( c -  \varepsilon \frac{1+|b_1|}{ \min_{\theta \in \p} |\lambda_{12}(\theta)|} - 1\right) \, L_{\gamma_N}\,.
\end{align*}
{Hence} taking the limit $N\to \infty$, we arrive at 
$2 \e \geq \left( c -  \varepsilon \frac{1+|b_1|}{ \min_{\theta \in \p} |\lambda_{12}(\theta)|} - 1\right) \, L_{\gamma} $. 
In particular, it follows
\begin{align*}
c \leq \e\,\left( 2\, L_{\gamma}^{-1} + \frac{1+|b_1|}{ \min_{\theta \in \p} |\lambda_{12}(\theta)| }\right)+ 1\,,
\end{align*}
which contradicts our choice of $c$ in \eqref{eq:def-c}.

\medskip
\noindent
Finally, assume that $L_\gamma =\infty$. Again, let $\e>0$ and define 
$f(\theta) := \binom{c \lambda_{12}(\theta)}{0} \in C_2(\p)$ with
\begin{align}\label{eq:def-c2}
  c > \e \left( \tfrac{1+|b_1|}{ \min_{\theta \in \p} \lambda_{12}(\theta)}\right) + 1\,.
\end{align}
Then, following the above arguments we obtain  
\begin{align*}
2 \e >\left( c -  \varepsilon \frac{1+|b_1|}{ \min_{\theta \in \p} |\lambda_{12}(\theta)|} - 1\right) \, L_{\gamma_N}
\end{align*}
and by {\eqref{eq:def-c2}} one has
$c -  \varepsilon \frac{1+|b_1|}{ \min_{\theta \in \p} |\lambda_{12}(\theta)|} - 1 >0$. 
This leads obviously to a contradiction 
% \begin{align*}
% \frac{2 \e} {c -  \varepsilon \frac{1+|b_1|}{ \min_{\theta \in \p} |\lambda_{12}(\theta)|} - 1} >  L_{\gamma_N}
% \end{align*}
since $L_{\gamma_N} \to \infty$ as $N\to \infty$. 
\end{proof}

\begin{remark}\label{rem:Jordan-norm-form}
\begin{enumerate}
\item[(a)]
Note that for a finite parameter set $\p := \{\theta_1, \dots, \theta_N\}$ the uniform ensemble reachability
problem of the pair $(J,B)$ boils down to a standard interpolation problem which can be solved exactly even
for single input systems whenever $\lambda(\theta_i) \neq \lambda(\theta_j)$ for $\theta_i \neq \theta_j$ and 
reachability of the individual systems $(J(\theta_i),B)$ is guaranteed.
\item[(b)]
In the last part of the above proof (concerning the necessity of rank condition on $B$) one could
easily allow $b_1$ to depend on $\theta$. This observation is important for an application of the above 
proposition in the proof of Theorem \ref{thm:CC-SI-necessary-conditions}. 
\end{enumerate}
\end{remark}

\begin{example}\label{ex:homogeneous_1}
 {Let $\p$ be a compact interval and consider the pair $(\theta A,B)$ with $A \in \C^{n\times n}$
    and $B\in \C^{n \times m}$. Note that this class of systems is extensively considered in \cite{li_tac_2016}
    and \cite{schonlein2016controllability}. In the following we sketch how to apply our previous results.
    Let $\lambda_1,...,\lambda_r$  denote the distinct eigenvalues of $A$ and let $T$
    be an invertible matrix such that
\begin{align*}
 T^{-1}AT = \begin{pmatrix}J_1 &  & \\ & \ddots & \\& & J_r \end{pmatrix} \quad T^{-1}B = \begin{pmatrix}B_1 \\ \vdots \\B_r \end{pmatrix},
\end{align*}
where $J_i$ is a generalized Jordan block, i.e. $J_i$ embraces all the Jordan blocks associated with the eigenvalue $\lambda_i$, for $i=1,...,r$.
\begin{enumerate}
  \item[(N)] Necessary for uniform ensemble reachability are that $(A,B)$ is reachable, $\operatorname{rank} A=n$ and $\operatorname{rank} B_i$ equals the size of $J_i$, $i=1,...,r$. In particular, $\operatorname{rank} B$ is necessarily at least the size of the largest generalized Jordan block of $A$.
  \item[(E)] If $0 \in \p$, then uniform ensemble reachability is equivalent to  $\operatorname{rank} A = \operatorname{rank} B=n$, cf. \cite[Thm.~1]{li_tac_2016} and \cite[Thm.~5]{schonlein2016controllability}
  \item[(S)] Sufficient for uniformly ensemble reachability is that in addition to (N) the following separation condition holds 
\begin{align*}
  \{\theta \lambda_i \, |\, \theta \in \p\} \cap   \{\theta \lambda_j \, |\, \theta \in \p\} = \emptyset
  \quad \text{ for all } i\neq j \in \{1,...,r\}.
\end{align*}  
 \end{enumerate}
We note that the above separation condition excludes the case that $0  \in \p$.  Assertion (N) follows from Proposition~\ref{thm:nec_multi}~(a) and~(b) and Proposition~\ref{prop:Jordan-norm-form}. The necessity part of (E) follows from Proposition~\ref{thm:nec_multi}~(a) and
     sufficiency is easily obtained by Propositions~\ref{prop:technical_MATCOM} and~\ref{prop:Jordan-norm-form}
     when assuming $A$ to be in Jordan canonical form and $B=\begin{pmatrix} I & 0 \end{pmatrix}$. Condition~(S) follows from the same reasoning.\\
     Finally, we note that for $0 \in \p$ a single-input pair $(\theta A,b)$ can never be uniformly ensemble reachable, unless $n=1$. Moreover, the application of Proposition~\ref{prop:scalar} yields that a scalar pair $(\theta a,b)$ is uniformly ensemble reachable if and only if $a\neq 0$ and $b \neq 0$ no matter whether $\p$ contains zero or not.
   }

\end{example}

\medskip
In order to derive sufficient conditions for uniform ensemble reachability for the case $0 \not \in \p$, we need to consider multi-input pairs that do not have {a} specific structure.  To this end, we recall the Hermite canonical form for a (parameter indenpendent) system $(A,B) \in \C^{n\times n} \times \C^{n\times m}$, cf.~\cite{sontag}. Let $b_i$ denote the $i$-th column of $B$. 
Select from left to right in the permuted Kalman matrix 
\begin{align}\label{eq:Hermite-list}
\begin{pmatrix}
 b_1 & Ab_1& \cdots A^{n-1}b_1&\cdots& b_m & Ab_m \cdots & A^{n-1}b_m
\end{pmatrix}
\end{align}
the first linear independent columns. Then, one obtains a list of basis vectors  
\begin{align*}
 b_1,...,A^{h_1-1}b_1,...,b_m,...,A^{h_m-1}b_m
\end{align*}
of the reachability subspace. The integers $h_1,...,h_m$ are called the {\it Hermite indices},
where $h_i:=0$ if the column $b_i$ has not been selected. One has $h_1+\cdots+h_m=n$ if and only if $(A,B)$ is reachable. 

{Now suppose} $(A,B)$ is reachable with Hermite indices $h_1\neq 0 , \cdots , h_k\neq 0$ and $ h_{k+1} =\cdots =h_m=0$. This is always achievable  by applying a suitable permutation. 
 Similar to Lemma~\ref{lem:canonical} (cf.~\cite[Section~6.4.6]{kailath1980}),
the invertible transformation
\begin{align*}\label{eq:Hermite-T}
T = (b_1,...,A^{h_1-1}b_1,...,b_k,...,A^{h_k-1}b_k )
\end{align*}
yields the Hermite canonical form 
\begin{equation}\label{eq:Hermite-canonical-form}
\tilde A(\theta) =
\begin{pmatrix}
\tilde A_{11}  & \cdots & \tilde A_{1k} \\
 & \ddots&  \vdots\\
0 & & \tilde A_{kk}
\end{pmatrix},
\quad
\tilde B(\theta) =
\begin{pmatrix}
\tilde b_{1} &  & 0 & * & * \\
 & \ddots & &*& * \\
0 & & \tilde b_{k}&*& * 
 \end{pmatrix}, 
\end{equation}
where the $k$ single-input subsystems $(\tilde A_{ii},\tilde b_i)\in \C^{n_i\times n_i}\times \C^{n_i}$, $i=1,...,k$ are reachable and in control canonical form. 

\medskip
\begin{theorem}\label{thm:CC-MI-sufficient-conditions}
Let $\p \subset \C$ be a compact. Then $(A,B) \in C_{n,n}(\p)\times C_{n,m}(\p)$ with $n \geq m$ is uniformly ensemble reachable if the following conditions are satisfied.
\begin{enumerate}
\item[(a)]
% The pairs $(A(\theta),B(\theta))$ are reachable for all $\theta \in \p$.
% \item[(b)]
The  Hermite indices of $(A(\theta),B(\theta))$  do not depend on $\theta \in \textbf{P}$.
\item[(b)]
The corresponding subpairs $(\tilde A_{ii},\tilde b_i)$ are uniformly ensemble reachable for all $i=1,...,k$.
% \marginpar{Doesn't this imply (a)?-$ \surd$}
% There associated continuous spectral decomposition $\Gamma_1,...,\Gamma_m$ is pairwise strictly disjoint
% with non-separating spectral sets $\Gamma_i(\p)$. 
% \item[(d)]
% The characteristic polynomials of the diagonal blocks $A_{ii}(\theta)$ in \eqref{eq:Hermite-canonical-form}
% take the form 
% $\chi_{A_i(\theta)}(z) = z^{n_i} + a_{i,n_i - 1}z^{n_i - 1} + \dots + a_{i,1} z + a_{i,0}(\theta)$.
% \item[(e)]
% The maps $\theta \mapsto a_{i,0}(\theta)$ are injective. 
\end{enumerate}
\end{theorem}
\begin{proof}
By condition {(a)} the exists a continuous family $T(\theta)$ of invertible matrices which transforms the pair 
$(A,B)$ into a $\theta$-dependent Hermite canonical form \eqref{eq:Hermite-canonical-form}. Since each associated
subsystem $(\tilde A_{ii},\tilde b_i)$ is uniformly ensemble reachable the claim follows by applying Proposition~\ref{prop:technical_MATCOM} to the truncated system
\begin{equation*}
 \begin{pmatrix}
\tilde A_{11}{(\theta)}  & \cdots & \tilde A_{1k}{(\theta)} \\
 & \ddots&  \vdots\\
0 & & \tilde A_{kk}{(\theta)}
\end{pmatrix},
\quad
\begin{pmatrix}
\tilde b_{1}{(\theta)} &  & 0  \\
 & \ddots &  \\
0 & & \tilde b_{k} {(\theta)}
 \end{pmatrix}.
\end{equation*}
\end{proof}

\medskip

The proof of the latter statement shows that if the Hermite indices are constant the {multi-input} case can be {tackled by considering} a number of single-input systems. 
{Note that, different to the finite dimensional case the truncation of $\tilde B$ in the above proof can result in a massive loss of control options, cf. Proposition~\ref{prop:Jordan-norm-form} and Example~\ref{ex:m>n_reasonable}.
Nevertheless}, to each of the single-input systems {one} can apply the  sufficient conditions derived above. {Hence we} immediately get the following extension of \cite[Theorem~1]{helmke2014uniform}, where $\p$ was assumed to be a compact real interval.

\begin{corollary}\label{cor:scl_mi}
Let $\p \subset \C$ be a compact and contractible {and the notation as in \eqref{eq:Hermite-canonical-form}}. Then, $(A,B) \in C_{n,n}(\p)\times C_{n,m}(\p)$ is uniformly ensemble reachable if the following conditions are satisfied.
\begin{enumerate}
\item[(a)]
The pairs $(A(\theta),B(\theta))$ are reachable for all $\theta \in \p$.
{
\item[(b)]
The Hermite indices of $(A(\theta),B(\theta))$  do not depend on $\theta \in \textbf{P}$.
\item[(c)]
The spectral maps $\operatorname{spec} \tilde A_{ii}$ are injective.
\item[(d)]
The eigenvalues of $\tilde A_{ii}(\theta)$ are simple for all $\theta \in \p$.
}
\end{enumerate}
\end{corollary}

\medskip
Note that 
% the eigenvalue assumptions in Corollary~\ref{cor:scl_mi} are in general {significantly} stronger than the {assumption of Theorem \ref{thm:CC-MI-sufficient-conditions} that the subsystems $(\tilde A_{ii},\tilde b_i)$} are uniformly ensemble reachable. However, the conditions in Corollary~\ref{cor:scl_mi} can be verified without determining the {associated subsystems}. 
% Moreover,
the Hermite indices of a pair $(A,B)$ are not invariant under input permutations. That is, {if} $P\in \C^{m\times m}$ be a permutation matrix, then the Hermite indices of $(A,B)$ and $(A, BP)$ do not coincide in general.
% Thus, it is sufficient to assume that the Hermite indices are constant for some arrangement of the columns of $B$.
{This degree of freedom could be helpful sometimes.}

\medskip

% 
% \noindent
We close this section with two examples. The first one illustrates that, in contrast to parameter-independent linear systems, it is of course reasonable to consider the case $m > n$ because $n$ is no longer the dimension
of the state space. 

\begin{example}\label{ex:m>n_reasonable}
Let $\p =[-1,1]$ and consider the pair $(a,B) \in C(\p)\times C_{1,2}(\p)$ defined by
 \begin{align*}
  a(\theta) = \theta^2 \qquad \text{ and } \qquad B(\theta) = \begin{pmatrix} 1 & \theta\end{pmatrix} .                                                                                       
 \end{align*}
The pair satisfies the necessary conditions in {Proposition~\ref{thm:nec_multi}} and has constant Hermite indices. To see that the pair is uniformly
ensemble reachable, let $f \in C(\p)$ and $\e>0$ be given. Then, we have to verify the existence of two
polynomials $p_1$ and $p_2$ such that
$| f(\theta) - p_1(\theta^2) - \theta \, p_2(\theta^2)|< \e $ for all $\theta \in [-1,1]$. By construction, we have
\[
 p_1(\theta^2) + \theta p_2(\theta^2) = c_0 + c_1 \theta + c_2 \theta^2 + \cdots + c_k \theta^k,
\]
where $c_{2l}$ denote the coefficients of $p_1$ and $c_{2l+1}$ denote the coefficients of $p_2$. Then, the claim 
{obviously follows by the} Weierstrass Approximation Theorem. We note that, since $a$ is not injective, we cannot apply Corollary~\ref{cor:scl_mi} to conclude the uniform ensemble reachability.
\end{example}

The next example demonstrates the role of the concept of contractibility in the previous results. 
More precisely, it shows that for the space of continuous functions $C(\p)$ 
{it might not be sufficient for uniform ensemble reachability to assume only the compactness of $\p$ (even if all other conditions
of Proposition~\ref{prop:scalar} or Theorem \ref{thm:suff-charakteristic}  are fulfilled).}

%Also we note that for the function space $L^q(\p)$ this can be weakened, cf. Example~\ref{ex:circle-Lq}.

\begin{example}\label{ex:circle-not-uniform}
  {Let $\p =\partial \mathbb D$ and consider the pair $(a_0,b_0) \in C(\p)\times C(\p)$. defined by
\begin{align*}
a_0(\theta) = \theta \qquad \text{ and } \qquad b_0(\theta) = 1    .                         
\end{align*}}
The pair is not uniformly ensemble reachable. This can easily be seen using a continuous function
$f\colon \partial \mathbb{D} \to \C$ that has no analytic extension to $\mathbb{D}$, e.g. $f(z)= \tfrac{1}{z}$.
Suppose $(a,b)$ is uniformly ensemble reachable, then for $\e =1$ there is a polynomial $p$ such that 
$|p(z) -f(z)| <1$ for all $z \in \partial \mathbb D$. This implies
\begin{align*}
{|z\, p(z) - 1| <|z| =1   \qquad \text{ for all }  z \in \partial \mathbb D.}
\end{align*}
Hence the non-constant holomorphic function $q\colon {\mathbb{D}} \to \C$, $q(z):=zp(z)-1$ does not
attained its maximum {modulus} on the boundary (note $q(0) = 1$ and $|q(z)|<1$ for all $z \in \partial \mathbb{D}$)
which obviously contradicts the maximum modulus theorem 
\cite[Ch.~12]{rudin1987realcomplex}.

{The above arguments easily extended to the general case $(a,b) \in C(\p)\times C(\p)$ as follows:
  Suppose that $(a,b)$ is uniformly ensemble reachable. Then Theorem~\ref{thm:CC-SI-necessary-conditions} implies that
  we can assume without loss of generality that $a:\partial \mathbb{D} \to \mathbb{C}$ in one-to-one and $b \equiv 1$. Thus
  the image of $a:\partial \mathbb{D} \to \mathbb{C}$ is by definition a Jordan curve. Now, choose $z_0$ in the interior of
  $a(\partial \mathbb{D})$ (cf.~\cite[Theorem~4.3.5]{roe_winding_2015}) and define $f(z) := \frac{1}{z-z_0}$. Since $(a,b)$ is
  assumed to be uniformly ensemble reachable on $\partial \mathbb{D}$ for every $\varepsilon > 0$ one can find a polynomial
  $p$ such that $|p(z) -f(z)| < \varepsilon$ for all $z \in a(\partial \mathbb{D})$. For $\varepsilon > 0$ sufficiently small,
  this yields again a contradiction to the maximum modulus principle (applied to the holomorphic function $q(z):=(z-z_0)p(z)-1$ on
  the closure of interior of $a(\partial \mathbb{D})$).}
\end{example}

For scalar pairs it is well-known in complex analysis that the connectedness of $\C\setminus a(\p)$ is necessary for polynomial approximation, cf.~\cite[{Remark~}13.8]{rudin1987realcomplex}. This is exactly what is violated in Example~\ref{ex:circle-not-uniform}. Contractibility of $\p$ implies that $\C \setminus \p$ is connected , cf. \cite[Prop.~4.2.8]{roe_winding_2015} and thus, it excludes that $\C \setminus a(\p)$ is not connected.

% 
%%%%%%%%%%%%%%%%%%%%%%%%%%%%%%%%%%%%%%%%%%%%%%%%%%%%%%%%%%%%%%%%%%%%%%%%%%%%
\section{$L^q$-ensemble reachability}\label{sec:L_q}
%%%%%%%%%%%%%%%%%%%%%%%%%%%%%%%%%%%%%%%%%%%%%%%%%%%%%%%%%%%%%%%%%%%%%%%%%%%%

In this section we focus on necessary and sufficient conditions for ensemble reachability on the separable
Banach spaces $L^q_n(\p)$, $1 \leq q < \infty$ consisting of all $L^q$-functions with values in $\C^n$. 
Throughout this section we assume that {$\mu$ is a \emph{regular\footnote{Note that the support
of $\mu$ can still be very ``small'' as a finite sum of point measures $\delta_i$, $i = 1,2,\dots$ is regular.
In this case $L^q_n(\p)$ behaves like a finite parameter set $\{\theta_i\;|\; i = 1,2,\dots,\}$ no matter
how ``large'' $\p$ seems to be.} (Borel) measure} (in the sense of \cite{lang_real_functional_1993}) on the
measure space $\p$ with $\sigma$-algebra $\Sigma(\p)$. For simplicity, we will omit the explicit reference to
$\mu$ whenever there is no reason for confusion.
% In a first step we derive necessary conditions for $L^q$-ensemble reachability.
We start with an auxiliary selection Lemma which is of its own interest.}

\begin{lemma}\label{lem:selection}
  Let $\p \in \C$ be compact and suppose the matrix-valued function {$M \colon \p \to \C^{n \times m}$}
  is measurable. Then, for the set-valued map $\theta \rightsquigarrow \ker M(\theta)$ there exists a $L^\infty$-selection
$v:\p \to \C^m$ such that $\|v(\theta)\| = 1$ whenever $\ker M(\theta) \neq \{0\}$.
\end{lemma}
\begin{proof}
By Lusin's Theorem \cite[Theorem~3.3, Chapter~IX]{lang_real_functional_1993} there are compact subsets $J_k$
of $\p$ such that $\mu\left( \p\setminus \bigcup_{k=1}^\infty J_k \right) =0$ and $\theta \mapsto M(\theta)$
is continuous on $J_k$ for every $k \in \N$. Now, consider the set-valued map $F\colon\p \rightsquigarrow \C^n$, 
\begin{align*}
F(\theta) = \begin{cases}
              \{0\} & \text{ if } \ker M(\theta)  = \{0\}\\
              \ker M(\theta)  \cap \overline{B}_1(0) & \text{ else. }
             \end{cases}
\end{align*}
Then, as $M$ is continuous on $J_k$ for every $k \in \N$ we have that the graph of $F|_{J_k}$ is closed and $F|_{J_k}$
is  bounded. Then, the arguments used in the proof of \cite[Theorem~A.7.3]{bressan2007introduction}
show that for every $k\in\N$ the lexicographical {selection\footnote{{The lexicographical
selection is given by $\xi_k(\theta) := \max F|_{J_k}(\theta)$, where the maximum is taken with respect to the lexicographical
order. Since the sets $F|_{J_k}(\theta)$ are compact, the maximum is guaranteed to exist for all $\theta$.}}}, denoted by $\theta \mapsto \xi_k(\theta)$, is
measurable on $J_k$. Then, $\xi\colon \bigcup_{k\in \N} J_k \to \C^n$, $\xi \big\vert_{{J_k}} (\theta) = \xi_k(\theta)$
is measurable and can be extended to a measurable function $v \colon \p \to \C^n$.
\end{proof}

\medskip
\noindent
We note that, even if the map $\theta \mapsto M(\theta)$ is continuous, the set-valued map $F$ does in general not allow
a continuous selection. This can be seen, for instance, using an example which is due to Rellich, cf.~\cite[II.\S~5.3]{kato}.

{We} call $\lambda:\p \to \C$ an \emph{$L^\infty$-eigenvalue selection} if $\lambda \in L^\infty(\p)$
satisfies $\lambda(\theta) \in \sigma(A(\theta))$ for almost all $\theta \in \p$ {and moreover} we say
that $\lambda$ is \emph{essentially injective} if there exists a set $\p_0 \subset \p$ of full measure such that
$\lambda|_{\p_0}$ is injective.

%%%%%%%%%%%%%%%%%%%%%%%%%%%%%%%%%%%%%%%%%%%%%%%%%%%%%%%%%%%%%%%%%%%%%%%%%%%%%%%%%%%%%%%%
\subsection{{Single-input parameter-dependent systems}}
%%%%%%%%%%%%%%%%%%%%%%%%%%%%%%%%%%%%%%%%%%%%%%%%%%%%%%%%%%%%%%%%%%%%%%%%%%%%%%%%%%%%%%%%

\medskip

{As in the uniform case we begin with necessary conditions for $L^q$-ensemble reachability.}

% 
% \begin{lemma}\label{lem:NEC-Lp-case}
% Let $\p \subset \C$ be compact and suppose $(A,B) \in C_{n,n}(\p) \times L^q_{n,m}(\p)$ is $L^q$-ensemble reachable. 
% Moreover, let $\lambda:\p \to \C$ be an $L^\infty$-eigenvalue selection. 
% Then $\lambda$ is essentially injective. 
% \end{lemma}
% %
% \begin{proof}
% 
% \end{proof}

\begin{theorem}\label{thm:L^q-NEC}
Let $\p \subset \C$ be compact and suppose $(A,b) \in C_{n,n}(\p) \times L^q_{n}(\p)$ is $L^q$-ensemble reachable. 
Then $(A,b)$ satisfies the following necessary conditions:
\begin{enumerate}
\item[(a)]
The pairs $(A(\theta),b(\theta))$ are reachable for almost all $\theta\in \textbf{P}$.
\item[(b)]
The eigenvalues of $A(\theta)$ have geometric multiplicity one for almost all $\theta$.
\item[(c)]
Every $L^\infty$-eigenvalue selection {of $A$} is essentially injective.
% $\sigma(A(\theta)) \cap \sigma(A(\theta')) = \emptyset$ for all $\theta,
% \theta' \in \p$ with $\theta \neq \theta'$.
% \item[(d)]
% If $\p$ does not ... then for almost all $\theta$ one has $|\sigma(A(\theta))| = n$.
\end{enumerate}
\end{theorem}
\begin{proof} (a) Suppose contrary that there is a set $\p' \subset \p$ with positive measure such that for
all $\theta \in \p'$ the pair $(A(\theta),{b}(\theta))$ is not reachable. Thus, for all $\theta \in \p'$ the rank
of the Kalman matrix $R_{(A,{b})}(\theta) = \left( {b}(\theta) \, A(\theta){b}(\theta)\, \cdots A(\theta)^{n-1}{b}(\theta) \right)$
is at most $n-1$. Hence, for each $\theta \in \p$ the dimension of the kernel of $R_{(A,{b})}(\theta)^\dagger$ is greater
or equal to one. Obviously, the map $\theta \mapsto R_{(A,{b})}(\theta)^\dagger$ is measurable. By Lemma~\ref{lem:selection}, 
there exists a $L^\infty$-function $\xi \colon \p \to \C^n$ such that $\xi(\theta)^\dagger R_{(A,{b})}(\theta) = 0$ for almost all 
$\theta \in \p$ and $\|\xi\| = 1$ for almost all $\theta \in \p'$. Consequently, the nonzero functional 
$L^q_n(\p) \ni f \mapsto \int_{\p} \xi(\theta)^\dagger f(\theta) \di{\mu}$ vanishes on the span of
$\{A(\cdot)^k b(\cdot) \; | \; k = 0,1,2,...\}$, which contradicts the $L^q$-ensemble reachability of $(A,b)$.

\medskip
\noindent

(b) This is an immediate consequence of (a).

\medskip
\noindent

(c) Let $\lambda\colon \p \to \C$ be a $L^\infty$-eigenvalue selection. Then, applying Lemma~\ref{lem:selection} to 
$A(\theta)^\dagger - \overline{\lambda(\theta)}I$, there is a nonzero $L^\infty$-function $v\colon \p \to \C^{n}$ such that on a set of full measure $\p_0$ one has 
\begin{align*}
 v(\theta)^\dagger A(\theta) = \overline{\lambda(\theta)} v(\theta)^\dagger \quad  \text{ and } \quad \|v(\theta)\|=1 
\quad \text{ for all } \theta \in \p_0,
\end{align*}
where $v(\theta)^{\dagger}$ denotes the complex-conjugate.
Now, consider the scalar system
\begin{align}\label{eq:L_infty_scalar}
\tfrac{\partial z}{\partial t}
(t,\theta)=\overline{\lambda(\theta)}z(t,\theta)+v(\theta)^\dagger b(\theta)\, u(t).
\end{align}
Note that if $\varphi(t,u,0)(\theta)$ is a solution to 
\begin{align*} 
\tfrac{\partial x}{\partial t}
(t,\theta)=A(\theta)x(t,\theta)+b(\theta)u(t)
\end{align*}
for the input $u \in L^1([0,T],\C)$, then $\psi(t,u,0)(\theta) := v(\theta)^\dagger \varphi(t,u,0)(\theta)$ is a solution
to \eqref{eq:L_infty_scalar} for the same input $u \in L^1([0,T],\C)$. 

Recall that if the scalar pair $(\lambda,v^\dagger\, b)$ is $L^q$-ensemble reachable then the multiplication operator $\mathcal{M}_{\lambda}$ is cyclic, cf.~page~4. Moreover, in \cite[Lemma~3.1]{seid1974cyclic} it is shown that $\mathcal{M}_{\lambda}$ is cyclic if and only if $\lambda$ is essentially injective\footnote{In \cite{seid1974cyclic} essentially injective is called essentially univalent.}.
% i.e. $\lambda:\p \to \C$ is injective on a set of full measure.
Thus, the claim follows if we can show that the pair $(\lambda,v^\dagger\, b)$ is $L^q$-ensemble reachable. 
Let $\e>0$ and $g \in L^q(\p)$. Then $gv$ is obviously in $L_n^q(\p)$ and, since $(A,b)$ is $L^q$-ensemble 
reachable, there exists $T > 0$ and an input $u^* \in L^1([0,T],\C)$ such that
\begin{align*} 
\|\varphi(T,u^*,0) - gv\|_{L^q(\p)} < \e.
\end{align*}
Furthermore, one has
\begin{align*} 
\|\psi(T,u^*,0)  - g\|_{L^q(\p)} 
&= \left( \int_{\p} | \psi(T,u^*,0,\theta) - g(\theta)|^q \di{\mu} \right)^{\tfrac{1}{q}}
\\
&= \left( \int_{\p} | v(\theta)^\dagger \big( \varphi(T,u^*,0,\theta) - g(\theta) v(\theta)\big) |^q \di{\mu} \right)^{\tfrac{1}{q}}\\ 
& \leq \left( \int_{\p} \| v(\theta)\|^q \| \varphi(T,u^*,0)(\theta) - g(\theta) v(\theta)\|^q \di{\mu} \right)^{\tfrac{1}{q}}\\
&\leq \|\varphi(T,u^*,0) - gv\|_{L^q(\p)} < \e.
\end{align*}
This shows the assertion.
\end{proof}

\begin{remark}
  Condition~(c) in the previous result partially corresponds to condition~(c) of Theorem~\ref{thm:CC-SI-necessary-conditions}.
  However, {the actual analog one would like have is that} essential injectivity of the spectral map
  $\operatorname{spec} A \colon \theta \rightsquigarrow  \sigma \big( A(\theta) \big)$ is
  is necessary for $L^q$-ensemble reachability. We expect this to hold although the conjecture has resisted several
  attempts of proof. 
\end{remark}

Of course, every pair $(A,b) \in C_{n,n}(\p) \times C_{n}(\p)$ which is uniform ensemble reachable is also $L^q$-ensemble
reachable since the continuous functions are dense in $L^q_n(\p)$. The following example demonstrates that the converse
is false, i.e.~there are pairs $(A,b) \in C_{n,1}(\p)$ which are $L^q$-ensemble reachable but not uniform ensemble reachable.

\begin{example}
Let $\p := [0,1]$ be equipped with the Lebesgue measure and consider
\begin{align*}
A(\theta):=
\begin{pmatrix} \theta & 0\\ 0 & -\theta\end{pmatrix}
\quad\text{and}\quad
b(\theta) := \begin{pmatrix} 1 \\ 1\end{pmatrix}\,.
\end{align*}
The pair $(A,b)$ obviously satisfies the necessary conditions of Theorem~\ref{thm:L^q-NEC}. Moreover, Theorem~\ref{thm:suff-charakteristic} shows that $(A,b)$ is uniformly ensemble reachable over the parameter space $[c,1]$ for any $c > 0$, but Theorem~\ref{thm:CC-SI-necessary-conditions} implies that uniform ensemble reachability fails over $[0,1]$ as the pair $(A(0),b(0))$ is not reachable. However, we will see that $(A,b)$ is $L^q$-ensemble reachable over $[0,1]$ for $1 \leq q <\infty$.

\medskip
Let $f = \binom{f_1}{f_2} \in L^q_2(\p)$ and $\varepsilon > 0$. We have to show that there 
is a polynomial $p$ such that 
% for all $\theta \in [0,1]$ one has
\begin{align*}
\|p - f_1\|_q < \e \qquad \text{ and } \qquad  \|p_- - f_2\|_q < \e
\end{align*}
with $p_-(\theta) := p(-\theta)$ for all $\theta \in [0,1]$. To this end, we choose continuous functions $g_1$
and $g_2$ on $\p$ such that  $\|f_i - g_i\|_q < \frac{\varepsilon}{2}$ and $g_i(0) = 0$ for $i=1,2$. Then, the 
continuous function $h \colon [-1,1] \to \C$ defined by 
\begin{align*}
 h(\theta) := \begin{cases}
              g_1(\theta) & \theta \in [0,1]\\
              g_2(-\theta) & \theta \in [-1,0]
             \end{cases}
\end{align*}
can approximated uniformly by a polynomial $p:[-1,1] \to \C$ such that $\|p-h\|_\infty < \frac{\varepsilon}{2}$. 
Consequently, we obtain 
% for every $\theta \in \p$  we have
\begin{align*}
\|p - f_1\|_q \leq  \|p - h\|_q + \|h - f_1\|_q \leq  \|p - h\|_q + \|g_1 - f_1\|_q  < \e 
\end{align*}
and
\begin{align*}
\|p_- - f_2\|_q \leq  \|p_- - h_-\|_q + \|h_- - f_2\|_q  \leq  \|p_- - h_-\|_q + \|g_2 - f_2\|_q < \e \,.
\end{align*}
\end{example}

In order to obtain sufficient conditions we will make use of the observation that a single-input pair 
$(A,b)\in C_{n,n} (\p) \times L_n^q(\p)$ is $L^q$-ensemble reachable if and only if the multiplication operator
\[ {\cal M}_A\colon L_{n}^q(\p) \to L_{n}^q(\p)\qquad {\cal M}_A\, f (\theta) = A(\theta)\,f(\theta)\]
is cyclic and $b$ is a cyclic vector for ${\cal M}_A$ (cf.~page~4). Similar to the uniform case, we consider scalar ensembles first.

\begin{proposition}\label{thm:Lq_scalar}
Let $\p \subset \C$ be compact. Then, the scalar pair $(a,b) \in C(\textbf{P})\times L^q(\p)$ is $L^q$-ensemble reachable for $q \in [1,\infty)$ if and only if $a:\p \to \C$ is essentially injective, $b(\theta) \neq 0$ for almost all $\theta \in \textbf{P}$, and 
\begin{equation}\label{eq:scalar-Lp}
  \inf_p \int_{\p} |p(a)b - \overline{a} b|^q \di{\mu} = 0,
\end{equation}
where $p$ runs through all complex polynomials.
\end{proposition}
\begin{proof}
The pair $(a,b)$ is $L^q$-ensemble reachable if and only if  
% $\{p(a)b \, |\, p \text{ polynomial } \}$ is dense in $L^q$, i.e. 
the multiplication operator ${\cal M}_a$ is cyclic and $b$ is a cyclic vector.
By \cite[Lem.~3.1]{seid1974cyclic} and \cite[Prop.~2.2]{ross2009common} the multiplication operator ${\cal M}_a$ is cyclic if and only if the function $a$ is essentially injective. {The necessity of the conditions $b(\theta) \neq 0$ for almost all $\theta \in \textbf{P}$ and \eqref{eq:scalar-Lp} follow from Theorem~\ref{thm:L^q-NEC}~(a) and the fact that $f := \overline{a} b$ belongs to $L_q(\p)$.} %cf.~\eqref{eq:con-ensemble}.

Next we show sufficiency. Let $\tilde \e >0$. Instead of $\mu$ we consider the measure $\mu_b$ induced by $|b|$, i.e.
\begin{equation*}
  {\mu_b(\Omega) := \int_{\p}|b|\di \mu \quad \text{for all}\; \Omega \in \Sigma(\p)\,.}
\end{equation*}
Then one has $f \in L_q(\p,\mu)$ iff $f/|b| \in L_q(\p,\mu_b)$ {and thus $L_q(\p,\mu)$ is isomorphic to $L_q(\p,\mu_b)$.}

% Now \eqref{eq:scalar-Lp} implies that $\overline{a}$ is in the closure of the reachable set  $R(a,1)$ (with respect to $\mu_b$). This implies that $a^k \overline{a}^l$ are in the closure of the reachable set of $(a,1)$ (with respect to $\mu_b$).  

%\marginpar{$\p'_{\e}$ replaced by $N_{\e}$}
Besides, for every $\e > 0$ we can find an open set {$N_{\e}$} such that $\mu(N_{\e}) < \e$ and $a$ restricted to the compact set $\p_{\e} := \p \setminus N_{\e}$ is injective and therefore a homeomorphism onto $a(\p_{\e})$. 
Now, we consider an arbitrary function $f \in L_q(\p,\mu_b)$. Since  $\mu_b$ is again regular, without loss of generality {we can assume} that $f$ is continuous.
Then, we define $g(z):= f(a^{-1}(z))$ for $z \in a(\p_{\e})$. Note that $a(\p_{\e})$ is a compact subset of $a(\p)$ and hence due to Tietze's Extension Theorem we can choose any continuous extension $\hat{g}$ of $g$ to $a(\p)$ with $\|\hat{g}\|_\infty = \|g\|_\infty {\leq \|f\|_\infty}$. The Stone-Weierstrass Theorem implies the existence of a polynomial $p$ in $z$ and $\bar{z}$ such that 
\begin{align*}
 \|p(z,\bar{z}) - \hat{g}(z)\|_{\infty} < \tilde \e \quad {\text{for all}\; z \in a(\p)\,.}
\end{align*}
This, together with Minkowski's inequality, leads to the following estimate
{
\begin{equation*}
\begin{split}
\int_{\p} &|p\big(a(\theta),\overline{a(\theta)}\big) - f(\theta)|^q \di \mu_b\\
& = \int_{N_{\e}} |p\big(a(\theta),\overline{a(\theta)}\big) - f(\theta)|^q \di \mu_b
+ \int_{\p_{\e}} |p\big(a(\theta),\overline{a(\theta)}\big) - f(\theta)|^q \di \mu_b\\
& \leq \left( \left( \int_{N_{\e}} |p\big(a(\theta),\overline{a(\theta)}\big)|^q \di \mu_b \right)^{\frac{1}{q}}
+ \left(\int_{N_{\e}} |f(\theta)|^q \di \mu_b \right)^{\frac{1}{q}}  \,  \right)^{{q}}
%\\& \hspace{3cm} 
+ \int_{\p_{\e}} |p\big(a(\theta),\overline{a(\theta)}\big) - f(\theta)|^q \di \mu_b\\[2mm]
& \leq { \left( \tilde \e  + \,2 \|f\|_{\infty} \right)^q }\mu_b(N_{\e}) + \tilde \e^q \mu_b(\p_{\e}) \\&=
{\left(  \tilde \e + \,2\|f\|_{\infty}\right)^q }\int_{N_{\e}}|b(\theta)|\di \mu + \tilde \e^q \int_{\p_{\e}}|b(\theta)|\di \mu\\[2mm]
& \leq {\left(  \tilde \e + \,2\|f\|_{\infty}\right)^q }\int_{N_{\e}}|b(\theta)|\di \mu + {\tilde \e^q\, } \|b\|_{1,\mu}
\end{split}
\end{equation*}
Thus, choosing $\e$ sufficiently small we can guarantee that $\int_{N_{\e}}|b(\theta)|\di \mu < \tilde \e^q$ holds and hence
\begin{equation*}
  \| p(a,\overline{a}) - f\|_{L^q_n(\p,\mu_b)} = \left( \int_{\p} |p\big(a(\theta),\overline{a}(\theta)\big) - f(\theta)|^q \di \mu_b \right)^{\tfrac{1}{q}} \leq \tilde \e\,
  {\Big((\tilde \e + \,2\|f\|_{\infty})^q   + \|b\|_{1,\mu} \Big)^{\tfrac{1}{q}}  \,.}
  % \sqrt[q]{   (\tilde \e + \,2\|f\|_{\infty})^q   + \|b\|_{1,\mu} }  \,.
\end{equation*}
}
Furthermore, \eqref{eq:scalar-Lp} implies that $\overline{a}$ is in the closure of the reachable set  $R(a,1)$ (with respect
to $\mu_b$). {It follows}\footnote{{If $p_n(a) \to \overline{a}$ with respect to $L^q(\p,\mu_b)$ then obviously $a^kp_n(a) \to a^k\overline{a}$ with respect to $L^q(\p,\mu_b)$. To see that $\overline{a}^2$ and all higher powers of $\overline{a}$ belongs to $R(a,1)$ choose $p_N$ such that $\|p_N(a) - \overline{a}\|_{1,\mu_b} < \e$. Then one has
$\|p_N(a)p_n(a) - \overline{a}^2\|_{1,\mu_b}
% \leq \|p_N(a)p_n(a) - p_N(a)\overline{a}\|_{1,\mu_b} + \|p_N(a)\overline{a} - \overline{a}^2\|_{1,\mu_b}
\leq \|p_N(a)\|_\infty \|p_n(a) - \overline{a}\|_{1,\mu_b} + \|\overline{a}\|_\infty\|p_N(a) - \overline{a}\|_{1,\mu_b} < 2 \e$
for $n$ sufficiently large.}} that for all $k,l\in \N$ the products $a^k \overline{a}^l$ are in the closure of the reachable
set of $(a,1)$ (with respect to $\mu_b$). Thus, for $p(z,\overline{z})$ and $\tilde \e>0$ there is a polynomial $\tilde p(z)$ such that
{
\begin{equation*}
\begin{split}
\| p(a,\overline{a}) - \tilde p(a)\|_{L^q_n(\p,\mu_b)} < \tilde \e.
%\int_{\p} |p\big(a(\theta)&,\overline{a(\theta)}\big) - \tilde p (a(\theta))|^q \di \mu_b < \tilde \e^q.
\end{split}
\end{equation*}
}
Consequently, one has
{
\begin{equation*}
\begin{split}
\| \tilde p(a)-f \|_{L^q_n(\p,\mu_b)} &\leq  
\|\tilde p(a) - p(a,\overline{a}) \|_{L^q_n(\p,\mu_b)}  + \| p(a,\overline{a}) - f\|_{L^q_n(\p,\mu_b)} \\
&< \left(1+ \sqrt[q]{   (\tilde \e + \,2\|f\|_{\infty})^q   + \|b\|_{1,\mu} }\right) \tilde \e.
% \int_{\p} |q\big(a(\theta)\big) - f(\theta)|^q \di \mu_b&\leq 
% % 
%  \int_{\p} |q\big(a(\theta)\big) - p\big(a(\theta),\overline{a(\theta)}\big)|^q \di \mu_b +
% \int_{\p} |p\big(a(\theta),\overline{a(\theta)}\big) - f(\theta)|^q \di \mu_b\\
% %
% &\leq 
% \left( 1 +  (\|f\|_{\infty} +\tilde \e)^q+ \,\|f\|_{\infty}^q   + \|b\|_{1,\mu}\right) \,\tilde \e^q\,.
\end{split}
\end{equation*}
{This shows} that $L^q(\p,\mu_b)$ coincides with the closure of the reachable set  ${R}(a,1)$ and thus $L^q(\p,\mu)$ coincides with the closure of the reachable set ${R}(a,b)$. 
}
\end{proof}

\medskip

With the above pre-considerations, we are prepared to state our main result on $L^q$-ensemble reachability. Compared to the uniform case, the essential difference lies in the fact that -- at least to our knowledge -- there is no simply criterion which guarantees the existence of a {${L^\infty}$-transformation $T(\theta)$ which takes $A(\theta)$ into a ``nice'', for instence, diagonal form (except for selfadjoint families $A(\theta)$, cf.~Remark \ref{rem:Lq_general})}. Therefore, we unfortunately have to require the existence of such a transformation $T(\theta)$ in the following result.

\begin{theorem}\label{thm:Lq_general}
Let $\p \subset \C$ be compact. Then $(A,b) \in C_{n,n}(\textbf{P})\times L^q_{n,1}(\p)$ is $L^q$-ensemble reachable for $q \in [1,\infty)$ if the following conditions are satisfied:
\begin{enumerate}
\item[(a)]
The pairs $\big(A(\theta), b(\theta)\big)$ are reachable for almost all $\theta \in \textbf{P}$.
\item[(b)] 
The eigenvalues of $A(\theta)$ are simple for almost all $\theta \in \textbf{P}$ .
\item[(c)] The spectral map is essentially injective.
\item[(d)] 
There exist $\lambda_1,...,\lambda_n \in L^\infty(\p)$ and $T \in L^\infty_{n,n}(\p)$ such that the operator
${\cal M}_T \colon L^q_n(\p) \to L^q_n(\p)$ is bounded, invertible (with bounded inverse) and satisfies
\begin{equation*}
T(\theta)^{-1}A(\theta)T(\theta) = 
\begin{pmatrix} \lambda_1(\theta) & & \\ & \ddots & \\& & \lambda_n(\theta) \end{pmatrix}
\qquad \text {for almost all } \theta \in \p\,.
\end{equation*}
\item[(e)]
The approximation condition
\begin{equation*}
\inf_{p} \int_{\p} {\max_{k=1,...,n}} \big|p(\lambda_k)\hat{b}_k - \overline{\lambda}_k\hat{b}_k\big|^q \di \mu = 0
\end{equation*}
holds, where $p$ runs through all complex polynomials and $\hat{b}_k(\theta)$ denotes
the $k$-th component of $\hat b(\theta) := T(\theta)^{-1}b(\theta)$.
\end{enumerate}
\end{theorem}
\begin{proof}
Due to the assumption %(b), (c) and
(d), it is sufficient to verify $L^q$-ensemble reachability of the pair
\begin{equation*}
\begin{pmatrix} \lambda_1(\theta) & & \\ & \ddots & \\ & & \lambda_n(\theta) \end{pmatrix}, 
 \quad 
 \hat b(\theta) := T(\theta)^{-1}b(\theta).
\end{equation*}
In order to apply Proposition~\ref{thm:Lq_scalar} we use the following transformation: Let $\p_n := \p \times \{1,...,n\}$ be the disjoint union of $n$ copies of $\p$ (each equipped with
the measure $\mu$) and define the isomorphism $\Phi:L^q_n(\p) \to L^q(\p_n)$ via
\begin{equation*}
\begin{split}
f = \begin{pmatrix} f_1 & \hdots & f_n\end{pmatrix}^\trans \mapsto \Phi(f), 
\quad \Phi(f) \, (\theta,k) = f_k(\theta)\,.
\end{split}
\end{equation*}
Moreover, we set
$\Lambda := \Phi (\lambda)$ for $\lambda = \begin{pmatrix} \lambda_1 & \cdots & \lambda_n\end{pmatrix}^\trans$
and consider the multiplication operator ${\cal M}_{\Lambda} : L^q(\p_n) \to L^q(\p_n)$ given by
\begin{equation*}
\begin{split}
g \mapsto \Lambda\,g, \quad 
\Lambda g (\theta,k) := \Lambda(\theta,k) \cdot g (\theta,k)    = \lambda_k(\theta)\cdot g (\theta,k).
\end{split}
\end{equation*}
Since $\Phi$ is an isomorphism the claim follows if the scalar multiplication operator ${\cal M}_{\Lambda}$
is cyclic with cyclic vector $\Phi(\hat{b})$. Conditions (a), (c) and (d) imply that ${\cal M}_{\Lambda}$ is
essential injective and that the entries $\hat b_k(\theta)$ are nonzero on a set of full measure. Moreover,
it is straightforward to show that condition (e) is equivalent to
\begin{equation*}
\inf_{p} \int_{\p_n}\big|p(\Lambda)\Phi(\hat{b}) - \overline{\Lambda}\Phi(\hat{b})\big|^q \di \mu = 0\,.
\end{equation*}
Hence it follows that we can apply {Proposition}~\ref{thm:Lq_scalar} and this shows the assertion.
\end{proof}

\begin{remark}\label{rem:Lq_general}
\begin{enumerate}
\item[(a)] 
If the eigenvalues $\lambda_1,...,\lambda_n \in C(\p)$ are real-valued, then condition~(e) is
automatically satisfied.
\item[(b)] 
 {In the case $A(\theta)$ is self-adjoint one can obviously choose $T(\theta)$ unitary and moreover, 
    if $\p$ is real and $A(\theta)$ additionally analytic this} often allows to construct a $L^\infty$-transformation as
  required in part (d) of the above Theorem, cf. \cite[Sec.~II.6]{kato}.
\item[(c)] 
  {For $p=2$ and self-adjoint bounded matrix-multiplication operator $\mathcal{M}_A$ it is well-known
    that $\mathcal{M}_A$ is in principle unitarily equivalent to a scalar multiplication operator $\mathcal{M}_a$ on
    $L^2([0,1],\tilde{\mu})$ for some appropriately chosen measure $\tilde{\mu}$, cf.~\cite[Theorem~4.1]{TianJorgensen}.
    Yet the construction of $\tilde{\mu}$ is not very explicit and resulting reachability criteria are difficult to test.}
\item[(d)] {It is easy to see that the scalar pair $(\theta,1)$ defined on $\p =\partial \mathbb{D}$
  is not $L^2$-ensemble reachable. However, it is shown in \cite{bram} that there exists a cyclic vector $b \neq 1$.
  Moreover, using a Theorem of Szeg\"o a complete characterization of cyclic vectors} is given in \cite[Cor.~2.8]{ross2009common}.
\end{enumerate}
\end{remark}

%%%%%%%%%%%%%%%%%%%%%%%%%%%%%%%%%%%%%%%%%%%%%%%%%%%%%%%%%%%%%%%%%%%%%%%%%%%%%%%%%%%%%%%%
\subsection{{Multi-input parameter-dependent systems}}
%%%%%%%%%%%%%%%%%%%%%%%%%%%%%%%%%%%%%%%%%%%%%%%%%%%%%%%%%%%%%%%%%%%%%%%%%%%%%%%%%%%%%%%%

\medskip

In this section we present {some} sufficient conditions for multi-input systems to be $L^q$-ensemble reachable. Consider {the} upper triangular pair of matrix families
\begin{align}\label{eq:lambda_lower_triangular_L^q}
A(\theta) = 
\begin{pmatrix} 
a_{11}(\theta) &  \cdots       &  a_{1n}(\theta)  \\ 
& \ddots &              \vdots      \\
&  & a_{nn}(\theta)
\end{pmatrix}   \in C_{n,n}(\p)
\quad \text{and} \quad
B(\theta) =
\begin{pmatrix} 
b_{11}(\theta) &    \cdots     &     b_{1n}(\theta) \\ 
& \ddots &           \vdots \\
 &  & b_{nn}(\theta)
\end{pmatrix}\in L^q_{n,n}(\p).
\end{align}
The following result relies on the cascade structure (Proposition~\ref{prop:technical_MATCOM}) and the $L^q$-ensemble reachability characterization given in Proposition~\ref{thm:Lq_scalar}. The proof is omitted as it is similar to the uniform case. 

\begin{proposition}\label{prop:Lq-MI-sufficient-conditions}
Let $\textbf{P}\subset \mathbb{C}$ be a compact. Then, the pair 
$(A,B)\in C_{n,n}(\p)\times  L^q_{n,n}(\p)$, defined in \eqref{eq:lambda_lower_triangular_L^q} is $L^q$-ensemble reachable
if {$a_{ii}$ is essentially injective, $b_{ii}(\theta) \neq 0$ for almost all $\theta \in \p$ and}  
\begin{equation}\label{eq:scalar-Lp_i}
  \inf_p \int_{\p} |p(a_{ii})b_{ii} - \overline{a_{ii}} b_{ii}|^q \di{\mu} = 0
\end{equation}
 {all $i=1,...,n$,} where $p$ runs through all complex polynomials.
% 
% the following conditions are satisfied 
% \begin{enumerate}
%  \item[(a)]  $a_{ii}$ is essentially injective.
%  \item[(b)] $b_{ii}(\theta) \neq 0$ for almost all $\theta \in \p$.
%  \item[(c)] 
% \end{enumerate}
% 
% 
% 
%  for all $i=1,...,n$.
\end{proposition}
We close this section by noting that an $L^q$-version of Theorem~\ref{thm:CC-MI-sufficient-conditions} is not straightforward. Even if the Hermite indices $h_1,...,h_m$ are constant for all parameters $\theta \in \p$ the application of the corresponding transformation
{
\begin{align*}%\label{eq:Hermite-T_L^q}
T(\theta) = (b_1(\theta),...,A^{h_1-1}(\theta)b_1(\theta),...,b_k(\theta),...,A^{h_k-1}(\theta)b_k(\theta) ) \in L^q_{n,n}(\p)
\end{align*}
yields submatrices $\tilde{A}_{ii}(\theta)$ which are not continuous in $\theta$ in general. Therefore, the previous theory does not apply. Other approaches to obtain sufficient conditions might be based on sufficient conditions for the matrix-multiplication
operator to be multicyclic on $L^q_n(\p)$ and the columns of $B$ to be cyclic vectors. {But} unfortunately,
we are not aware of such results. 
}

%%%%%%%%%%%%%%%%%%%%%%%%%%%%%%%%%%%%%%%%%%%%%%%%%%%%%%%%%%%%%%%%%%%%%%%%%%%%%%%%
\section{Application to averaged reachability}\label{sec:output}
%%%%%%%%%%%%%%%%%%%%%%%%%%%%%%%%%%%%%%%%%%%%%%%%%%%%%%%%%%%%%%%%%%%%%%%%%%%%%%%%

In applications such as e.g. cell biology or quantum systems, a frequently met task is to extract 
information of the system from average measurements. Throughout this section we consider $X_n(\p)$ to be either $C_n(\p)$ or $L_n^q(\p)$, for $q \in [1,\infty)$ and study linear ensembles
with outputs given by an average of the form
% \marginpar{Properties of the measure $\di{\theta}$????}
\begin{align*}
y=\int_{\textbf{P}}^{}C(\theta)f(\theta)\di{\mu},
\end{align*}
where $\mu$ is a regular Borel measure on $\p$ and for simplicity $C(\theta) \in \C^{p\times n}$ is a continuous matrix function. Hence, the output operator 
${\cal C}\colon X_n(\p) \to \C^p$,
% \marginpar{was ist $X_n(\p)$???}
\begin{align*}
{\cal C}{f} =  \int_{\p} C(\theta) f(\theta) \di{\mu}
\end{align*}
is bounded linear.

In the following we are interested in {pointwise testable conditions on the matrix triple $(A,B,C)$ such that arbitrary averages can be reached.} More precisely, we say that a triple $(A,B,C) \in C_{n,n}(\p)\times X_{n,m}(\p) \times C_{p,n}(\p)$ is
\textit{{averaged} reachable (from zero)}, if for any $y\in \mathbb C^p$ there exist {$T>0$ and  $u\in U(T)$} such that
\begin{align*}% \label{eq:output-ens-co}
{\cal C} \varphi(T,u,0)=\int_{\p} C(\theta) \varphi(T,u,0)(\theta) \di{{\mu}} = y.
\end{align*}
%\marginpar{2 x label -- eq:output-ens-co}
As the output space is finite dimensional the latter is equivalent to approximate {averaged} reachability,
i.e.~to the condition that for every $y \in \C^p$ and every $\e>0$ there exist {$T>0$ and  $u\in U(T)$} such that
\begin{align*}% \label{eq:output-ens-co}
\left\|{\cal C} \varphi(T,u,0) - y\right\|_{\C^p} <\e.
\end{align*}
Let $B=(b_1,...,b_m)$, then in the discrete-time case the output at time $T$ with input sequence
$u=(u{(0)},...,u{(T-1)})$ is given by
\begin{align*}
{\cal C} \varphi(T,u,0) =  \sum_{k=0}^{T-1}   \sum_{j=1}^m {u_{j}(T-1-k)} \int_{\p} C(\theta)A(\theta)^k b_j(\theta) \di{{\mu}}
\end{align*}
and the set of reachable outputs is given by
\begin{align*}
 \operatorname{span}\left\{ \int_{\textbf{P}}^{}C(\theta)A(\theta)^k b_j(\theta)\di{{\mu}}\,\, |\, \,{j=1,...,m,\,} k=0,1,2,... \right\}.
\end{align*}
{
Thus, in discrete-time a triple $(A,B,C) \in C_{n,n} (\p) \times X_{n,m}(\p)\times C_{p,n}(\p)$ is {averaged} reachable if and only if 
\begin{align}\label{eq:char_averaged}
\operatorname{span} \left\{  \int_{\textbf{P}}^{}C(\theta)A(\theta)^k b_j(\theta)\di{{\mu}}\, \, | \, \,   j=1,...,m\, ,\; k=0,1,2,... \right\} =\C^p. 
\end{align}
It is well-known that the latter characterization also holds for continuous-time systems, cf. \cite[Corollary~ 7.1.2]{triggiani75}. For the particular case $C(\theta) = I \in \R^{n \times n}$, yet under the weaker assumption that the matrix pair $(A(\theta),B(\theta))$ is only measurable, the same result can be found in \cite[Theorem~3]{zuazua2014averaged}. Note that, if $A(\theta)$ is only measurable the corresponding multiplication operator $\mathcal M_A$ as defined in \eqref{eq:def-operators-scalar} may be unbounded and therefore the results of Triggiani~\cite{triggiani75} which rely on the boundedness of the involved operators do not cover this case.}

{ Obviously, the if-and-only-if condition \eqref{eq:char_averaged}, which might be hard to verify,
  illustrates that averaged reachability is a subtle interplay between the matrices $A(\theta)$, $B(\theta)$,
  $C(\theta)$ and the measure $\mu$.
  % Though \eqref{eq:char_averaged} is an explicit characterization its verification might be hard to check.
  Via the results of the previous sections we will derive some sufficient conditions for averaged reachability which are
  easy to test in terms of $(A(\theta),B(\theta),C(\theta))$.}
{To recap known necessary and sufficient conditions for averaged reachability let $C(\theta)=( c_{ij}(\theta) )_{i=1,..,p \atop j=1,...,n}$ and consider the functionals
\begin{align*}
 h_k\colon X_n(\p) \to \C ,\quad  h_k\, f = \int_\p  \sum_{j=1}^n c_{kj}(\theta) f_j(\theta) \di{\mu}, \qquad k=1,...,p.
\end{align*}
In \cite[Corollary~6.2]{triggiani75} it is shown that the triple $(A,B,C)$ is averaged reachable if $(A,B)$ is ensemble
reachable on $X_n(\p)$ and the functionals $h_1,...,h_p$ are linearly independent (in $X_n(\p)^*$). In the following we shall show an equivalent result, cf.~Prop.~\ref{thm:pointwise_outec}, which is more explicit
in terms of the matrices $C(\theta)$. To do so,
% we start with a lemma and
we define the output reachable set of $(A,B,C)$ by 
\begin{align*}
 R(A,B,C) = \left\{ \int_\p C(\theta) \varphi(t,u,0)(\theta) \di \mu \,\, |\,\,  {t\geq 0, \, \, u \in U(t)} \right\}
\end{align*} and recall that the \emph{support} of $\mu$ is given by $\operatorname{supp}(\mu):=\p \setminus \p_0$,
  where $\p_0$ is the largest open subset of $\p$ such that $\mu(\p_0) = 0$.}

\begin{lemma}\label{lem:output-reach}
Let $C \in C_{p,n}(\p)$ and assume that $(A,B) \in C_{n,n} (\p) \times X_{n,m}(\p)$ is ensemble reachable on $X_{n}(\p)$. If $\theta_0 \in \p$ belongs to the support of $\mu$ then  $\operatorname{Im}C(\theta_0) \subset R(A,B,C)$.
\end{lemma}
\begin{proof}
{Without loss of generality let $q=1$. Moreover, let $\theta_0 \in \operatorname{supp}(\mu)$ and $y = C(\theta_0)x$}. Then, by continuity, for $\e > 0$ there exists $\delta > 0$ such that  
$$
\|C(\theta) - C(\theta_0)\| < \e
$$
for all $\theta \in \p$ with $|\theta - \theta_0| < \delta$. Now, choose the step function
$$
g_{x,\delta} (\theta):= 
\begin{cases}
\mu(B_{\delta}(\theta_0) \cap \p)^{-1} x & \text{for } \theta \in B_{\delta}(\theta_0)\cap \p\,,\\
0 & \text{else.}
\end{cases}
$$
Note that $\mu(B_{\delta}(\theta_0) \cap \p)^{-1} > 0$ since $\theta_0 \in \operatorname{supp}(\mu)$. Then, due to our reachability assumption (in the uniform case use additionally Lusin's Theorem) there
exists an input $u$ such that $$\|g_{x,\delta} - \varphi(T,u,0)\|_1 < \e.$$
This yields the following estimate
\begin{equation}
\begin{split}
\Big\| \int_{\p} C(\theta)&\varphi(T,\theta,u) \di \mu - y \Big{\|} = 
\Big\| \int_{\p} C(\theta)\varphi(T,\theta,u) \di \mu - C(\theta_0)x \Big{\|}\\
& \leq \Big\| \int_{\p} C(\theta)\varphi(T,\theta,u) - C(\theta)g_{x,\delta}(\theta) \di \mu \Big{\|}
+ \Big\| \int_{\p} C(\theta)g_{x,\delta}(\theta) \di \mu - C(\theta_0)x \Big{\|}\\
& \leq \Big\| \int_{\p} C(\theta)\varphi(T,\theta,u) - C(\theta)g_{x,\delta}(\theta) \di \mu \Big{\|} \\
&\qquad \qquad + \Big\| \int_{B_\delta(\theta_0) \cap \p} C(\theta)g_{x,\delta}(\theta)  - \mu(B_{\delta}(\theta_0) \cap \p)^{-1} C(\theta_0)x \di \mu\Big{\|}\\
& \leq (\|C\|_\infty \,+ \|x\|)\, \e
\end{split}
\end{equation}
This shows {that} $y = C(\theta_0)x$ is approximately {averaged} reachable and due to finite dimension of the output space $y$ is also {averaged} reachable, i.e. $y\in R(A,B,C)$.
\end{proof}

\medskip

The latter statement is now used to show the following sufficient conditions for {averaged} reachability.  

\begin{proposition}\label{thm:pointwise_outec}
{The triple} $(A,B,C) \in C_{n,n} (\p) \times X_{n,m}(\p)\times C_{p,n}(\p)$ is {averaged} reachable if  
\begin{enumerate}
\item[$(a)$] $(A,B)$ is ensemble reachable on $X_n(\p)$.
\item[$(b)$] {There} are distinct $\theta_1,...,\theta_k{\in \operatorname{supp}\mu}$ such that $\operatorname{rank} \big( C(\theta_1)\,| \, \cdots \,|\, C(\theta_k)\big)=p$.
\end{enumerate}
{Moreover, condition~(b) is necessary for averaged reachability.}
\end{proposition}
\begin{proof}
 The output reachable set is a subspace of $\C^p$ and by Lemma~\ref{lem:output-reach}, we know that for each $i=1,...,k$ we have $\operatorname{Im} C(\theta_i) \subset R(A,B,C)$. The claim then follows from the equivalence of $\operatorname{Im} C(\theta_1) + \cdots +\operatorname{Im} C(\theta_k) = \C^p$ and $\operatorname{rank} \big( C(\theta_1)\,| \, \cdots \,|\, C(\theta_k)\big)=p$.
 
 \medskip
 
 {Next, it is straightforward to see that the linear functionals $h_1,...,h_p$ have to be
   linear independent for averaged reachability, cf.~\cite[Corollary~6.2]{triggiani75} . Moreover, by continuity of $C(\theta)$
   the functionals $h_1,...,h_p$ are linearly independent if and only if
\begin{align}\label{eq:equiv_functional_lin_indep_C}
(\alpha_1 \, \cdots \,\alpha_p) \, C(\theta) = 0 \quad \forall \, \theta \in \operatorname{supp} \mu \qquad \Longrightarrow \quad \alpha_1 = \cdots =\alpha_p=0.
\end{align}
Also, due to the finite dimensionality of the output space condition~(b) holds if and only if
\begin{align}\label{eq:image_C_spans_Cp}
\operatorname{span} \left\{ \operatorname{im} C(\theta) \, |\, \, \theta \in \operatorname{supp} \mu \right\}= \C^p.
\end{align}
Hence, the necessity of ~(b) follows immediately from the equivalence of \eqref{eq:equiv_functional_lin_indep_C} and \eqref{eq:image_C_spans_Cp}. }
\end{proof}

\medskip
{Note that for standard averaging operators, such as 
\begin{align*}
  {\cal C}f = \int_{\p} f(\theta) \di{\mu}\,,
\end{align*}
i.e. $C(\theta) =I$, condition~(b) in Proposition~\ref{thm:pointwise_outec} is automatically satisfied and hence the result boils down to the well-known and trivial fact that approximate reachability implies averaged reachability.
The lemma also shows that if condition~(b) is not satisfied, ensemble reachablility does not imply
averaged reachability. Further, by the results of Section~\ref{sec:unif} and Section~\ref{sec:L_q}, one can specify condition (a) of Proposition~\ref{thm:pointwise_outec} to get pointwise verifiable sufficient conditions for averaged reachability. Here, exemplarily, we only state the uniform case.
}

\begin{corollary}\label{cor:pointwise_outec}
Let $\p$ be compact and contractible. Then, $(A,B,C) \in C_{n,n} (\p) \times C_{n,m}(\p)\times C_{p,n}(\p)$ is {averaged} reachable if  
\begin{enumerate}
\item[$(a)$] $(A(\theta),B(\theta))$ is reachable for all $\theta\in \textbf{P}$.
\item[$(b)$] The Hermite indices of $(A(\theta),B(\theta))$  do not depend on $\theta \in \textbf{P}$.
\item[$(c)$] The spectral map is injective.
\item[$(d)$] For each $\theta\in \textbf{P}$, the eigenvalues of $A(\theta)$ are simple.
\item[$(e)$] There are distinct $\theta_1,...,\theta_k{\in \operatorname{supp}\mu}$ such that $\operatorname{rank} \big( C(\theta_1)\,| \, \cdots \,|\, C(\theta_k)\big)=p$.
\end{enumerate}
\end{corollary}

{The above conditions are quite strong and far from being necessary. The derivation of sharper
conditions for averaged reachability or, equivalently, for \eqref{eq:char_averaged} to hold is certainly desirable
but beyond the scope of this paper.} {Finally, we note that the results can naturaly be extended to output matrices $C(\theta)$ whose rows are in $X_n(\p)^*$.} 
\medskip

%%%%%%%%%%%%%%%%%%%%%%%%%%%%%%%%%%%%%%%%%%%%%%%%%%%%%%%%%%%%%%%%%%%%%%%%%
\section{Appendix}\label{sec:appendix}
%%%%%%%%%%%%%%%%%%%%%%%%%%%%%%%%%%%%%%%%%%%%%%%%%%%%%%%%%%%%%%%%%%%%%%%%%

\subsection{Proof of Lemma \ref{lem1:spectral-family}}\label{appendix:2}

\begin{proof}
(a): Assume without loss of generality $U = \p$ and define $k:= \max_{\theta \in \p}{\operatorname{card} \sigma(A(\theta)) }$. Choose $\theta_0 \in \p$ 
with $\sigma(A(\theta_0)) = \{\lambda_1, \dots, \lambda_k\}$ and $\lambda_i \neq \lambda_j$ for $i \neq j$.
Then there exist disjoint neighbourhoods $U_i$ with $\lambda_i \in U_i$ for $i = 1, \dots, k$ and therefore 
Rouch\'{e}'s Theorem \cite[Theorem~10.43~(b)]{rudin1987realcomplex} plus the maximality of $k$ guarantees
the existence of an {relatively} open neighbourhood $V$ of $\theta_0$ with $\operatorname{card} \sigma(A(\theta)) \cap U_i = 1$ for 
$i = 1, \dots, k$ and all $\theta \in V$. This allows to define a single-valued continuous spectral
decomposition on $V$.

%\noindent
(b): See \cite[{Chapter~II, \S~5, Theorem~5.2}]{kato}.

\medskip
\noindent
(c): The result should be well-known and follows from a straightforward application of the Lifting
Theorem, cf.~\cite[Ch.~III, Thm~4.1 \& Cor.~4.3]{Bredon_top_geo_1993}. Therefore, we only sketch the
necessary arguments.  Let $\C^{n \times n}_{\rm s}$ denote the set of all complex $n \times n$ matrices 
with simple eigenvalues and let $\operatorname{GL}_n(\C)$ be the set of all invertible complex 
$n \times n$ matrices. Moreover, let $\Delta_{n}(\C)$ consist of all complex diagonal matrices
and set $\Delta_{n,\rm s}(\C) := \Delta_{n}(\C) \cap \C^{n \times n}_{\rm s}$ and 
$\operatorname{D}_n(\C) := \Delta_{n}(\C) \cap \operatorname{GL}_n(\C)$. With these preliminaries we 
can construct a covering map \cite[Ch.~III, Def.~3.1]{Bredon_top_geo_1993}
\begin{equation*}
% \begin{split}
F:\widehat{\operatorname{GL}}_n(\C) \times \Delta_{n,\rm s}(\C) \to \C^{n \times n}_{\rm s}\,,
\quad\quad \big(T\operatorname{D}_n(\C),\Lambda\big) \mapsto T \Lambda T^{-1}\,,
% \end{split}
\end{equation*}
where $\widehat{\operatorname{GL}}_n(\C) := \operatorname{GL}_n(\C)/\operatorname{D}_n(\C)$ denotes the
homogeneous space of all left cosets of $\operatorname{D}_n(\C)$. Obviously, $F$ is well-defined as
$\Lambda$ commutes with any another diagonal matrix. Moreover, since $\operatorname{GL}_n(\C)$ and 
$\Delta_{n,\rm s}(\C)$ are Hausdorff, path-connected and locally path-connected we conclude that 
$\widehat{\operatorname{GL}}_n(\C) \times \Delta_{n,\rm s}(\C)$ is  Hausdorff, path-connected
and locally path-connected, too. Finally, we have to show the existence of an elementary neighborhood 
$U$ for all $X \in \C^{n \times n}_{\rm s}$. To this end, we first compute $F^{-1}(X)$. W.l.o.g.~we can 
assume that $X$ is diagonal and thus it is straightforward to see that $X$ has $n!$ preimages consisting 
of pairs $(\Pi\operatorname{D}_n(\C), \Pi^{-1} X \Pi)$ where $\Pi$ denotes an arbitrary permutation matrix.
In order to see that each preimage has a neighborhood $U_\pi$ which is homeomorphically mapped to $U$
we can exploit the fact that $F$ is a local diffeomorphism with respect to the canonical manifold
structure of the homogeneous space $\widehat{\operatorname{GL}}_n(\C)$, \cite{Helgason,Warner}.
In particular for $\Pi = \operatorname{id}_n$ and $[\operatorname{id}_n] := \operatorname{D}_n(\C)$
we obtain
\begin{equation*}
\begin{split}
\operatorname{T}_{([\operatorname{id}_n],X)} F \; (P,H) 
& = \operatorname{T}_{[\operatorname{id}_n]} F(\,\cdot\,,X)\,P 
+ \operatorname{T}_{X} F([\operatorname{id}_n],\,\cdot\,)\,H\\
& = \frac{\di}{\di t}{\rm e}^{tP}X{\rm e}^{-tP}\big|_{t=0} + H = [P,X] + H\,,
\end{split}
\end{equation*}
where $P$ and $H$ are tangent vectors of $\widehat{\operatorname{GL}}_n(\C)$ at $[\operatorname{id}_n]$
and $\Delta_{n,\rm s}(\C)$ at $X$, respectively. Here, we can identify the tangent space of
$\Delta_{n,\rm s}(\C)$ at $X$ with set of all complex diagonal matrices $\Delta_{n}(\C)$ and the tangent 
space of $\widehat{\operatorname{GL}}_n(\C)$ at $[\operatorname{id}_n]$ with any complementary space 
$\mathfrak{p}$ of $\Delta_{n}(\C)$.
For instance, a convenient choice for $\mathfrak{p}$ is the 
orthogonal complement of $\Delta_{n}(\C)$ which consists of all  complex matrices 
which vanish on the diagonal. Thus it is straightforward to see that the tangent map of $F$ at $\big([\operatorname{id}_n],X \big)$
is invertible.  A similar computation yields
\begin{equation*}
\begin{split}
\operatorname{T}_{([\Pi],X)} F \; (P,H) 
& = \operatorname{T}_{[\Pi]} F(\,\cdot\,,X)\,P 
+ \operatorname{T}_{X} F([\Pi],\,\cdot\,)\,H\\
& = \frac{\di}{\di t}\Pi{\rm e}^{tP}X{\rm e}^{-tP}\Pi^{-1}\big|_{t=0} + \Pi H \Pi^{-1} = \Pi \big([P,X] +  H \big) \Pi^{-1}
\end{split}
\end{equation*}
and hence the tangent map of $F$ is always invertible. Therefore $F$ is a local diffeomorphism and this allows
to apply the Lifting Theorem \cite[Ch.~~III, Thm.~4.1 \& Cor.~4.3]{Bredon_top_geo_1993} to the map $A$, because 
$\p$ is by assumption locally path-connected and due to its contractibility also simply connected and path-connected. 
Hence there exists a lifting $\hat{A}$ of $A$ such that the following diagram commutes:

\begin{center}
 \begin{tikzpicture}
  \matrix (m) [matrix of math nodes,row sep=3em,column sep=4em,minimum width=2em]
  {
     {} & \widehat{\operatorname{GL}}_n(\C) \times \Delta_{n,\rm s}(\C)\\
     \p &  \C^{n \times n}_{\rm s}\\};
  \path[-stealth]
    %(m-1-1) edge node [left] {$X$} 
    %(m-2-1) edge [double] node [below] {$Y$}
    (m-1-2)
    (m-2-1.east|-m-2-2) edge node [above] {}
            node [above] {$A$} (m-2-2)
    (m-1-2) edge node [right] {$F$}  (m-2-2)
    %edge [-] (m-2-1)
   (m-2-1) edge node [above] {$\hat A$}  (m-1-2);
\end{tikzpicture}
\end{center}
Thus the map $\pi_2 \circ \hat{A}$, where $\pi_2$ denotes the projection onto the second component,
yields the desired continuous eigenvalue spectral decomposition.
\end{proof}

%%%%%%%%%%%%%%%%%%%%%%%%%%%%%%%%%%%%%%%%%%%%%%%%%%%%%%%%%%%%%%%%%%%%%%%%%

%%%%%%%%%%%%%%%%%%%%%%%%%%%%%%%%%%%%%%%%%%%%%%%%%%%%%%%%%%%%%%%%%%%%%%%%%

\subsection{Proof of Proposition~\ref{prop:spectral-family}}\label{appendix:3}

\begin{proof}
Since $\Gamma_1(\theta), \dots, \Gamma_k(\theta)$ 
are pointwise disjoint we can construct cycles $\Sigma_1(\theta), \dots, \Sigma_k(\theta)$ 
in the complex plane for all $\theta \in \p$ such that 
\[
\operatorname{ind}_{\Sigma_i(\theta)}(z) = 
\begin{cases}
1 & \text{for all}\; z \in \Gamma_i(\theta)\,,\\
0 & \text{for all}\; z \in \sigma\big(A(\theta)\big) \setminus \Gamma_i(\theta)\,,
\end{cases}
\]
where $\operatorname{ind}_{\Sigma_i(\theta)}(z)$ denotes the winding number of $z \in \mathbb{C}$ with respect
to $\Sigma_i(\theta)$. Hence, we can define the following spectral projections
\[
P_i(\theta) = \frac{1}{2 \pi { i} } \int_{\Sigma_i(\theta)} \big(zI-A(\theta)\big)^{-1} \di{z}.
\]
We claim that the map $\theta \mapsto P_i(\theta)$ is continuous and that the rank of 
$P_i(\theta)$ is constant with respect to $\theta$. To see that $\theta \mapsto P_i(\theta)$
is continuous we first note that the continuity of $\Gamma_i$ implies that the Hausdorff distance
between $\Gamma_i(\theta)$ and $\Gamma_i(\theta')$ tends to zero as $\theta'$ tends to $\theta$. 
Therefore, $\Gamma_i(\theta')$ is contained in
$\Omega_i(\theta) := \{z \in \mathbb{C} \;|\; \operatorname{ind}_{\Sigma_i(\theta)}(z) = 1\}$ 
for $\theta'$ sufficiently close to $\theta$ and thus, by Cauchy's Theorem 
\cite[Thm.~10.35]{rudin1987realcomplex}, one has 
\[
P_i(\theta') = \frac{1}{2 \pi { i} } \int_{\Sigma_i(\theta')} \big(zI-A(\theta')\big)^{-1} \di{z}
= \frac{1}{2 \pi { i} } \int_{\Sigma_i(\theta)} \big(zI-A(\theta')\big)^{-1} \di{z}.
\]
for $\theta'$ sufficiently close to  $\theta$. From the above representation of $P_i(\theta')$,
it follows that $\theta \mapsto P_i(\theta)$ is continuous. Next, we note that
$\sum_{i=1}^k \operatorname{rank} P_i(\theta) = n$ for all $\theta \in \p$ as 
$\Sigma_1(\theta) +\dots + \Sigma_k(\theta)$ yields a circle around the spectrum of $A(\theta)$. 
Moreover, by continuity with respect to $\theta$ one knows that the rank of $P_k(\theta')$ is 
greater or equal to the rank of $P_k(\theta)$ in a neighborhood of $\theta$. Since this holds for 
all $i = 1, \dots, k$ we conclude that the rank of $P_i(\theta)$ is locally constant with respect 
to $\theta$ and because of the connectedness of $\p$ it is globally constant. Finally, we can apply
a generalization of Dole\v{z}al's result \cite{dolevzal1964existence} which was obtained by Grasse 
\cite[Theorem~3.8]{grasse_laa_2004} and guarantees the existence of a continuous family of matrices 
$T_i(\theta) \in \mathbb{C}^{n \times n_i}$ with $n_i := \operatorname{rank} P_i(\theta)$ for all 
$i = 1, \dots, k$ such that the columns of $T_i(\theta)$ span the image of the $P_i(\theta)$ which is 
of course an $A(\theta)$-invariant subspace. Hence, there exist $A_i(\theta) \in \mathbb{C}^{n_k \times n_i}$
with
\[
A(\theta)T_i(\theta) = T_i(\theta)A_i(\theta)
\]
for $i = 1, \dots, k$. In particular, $A_i(\theta)$ are given by
$A_i(\theta) = \big(T_i(\theta)^* T_i(\theta)\big)^{-1}T_i(\theta)^* A(\theta)T_i(\theta) $ for 
$i = 1, \dots, k$. Then stacking all $T_i(\theta)$ together, i.e.~setting
$T(\theta) := \big(T_1(\theta) \,|\, \dots \,|\, T_k(\theta)\big) \in \mathbb{C}^{n \times n}$, yields
the desired result
\[
T(\theta)^{-1} A(\theta) T(\theta) = 
\begin{pmatrix}
A_1(\theta) & & 0\\
 & \ddots & \\
0 & & A_k(\theta)
\end{pmatrix}\,.
\]
The stated spectral condition follows simply from the fact that $P_i(\theta)$ is by construction
the spectral projection onto all generalized eigenspaces whose eigenvalues are surrounded by 
$\Gamma_i(\theta)$. 
\end{proof}

%%%%%%%%%%%%%%%%%%%%%%%%%%%%%%%%%%%%%%%%%%%%%%%%%%%%%%%%%%%%%%%%%%%%%%%%%%%%%%%%%%%%%%%%%%%%%%%%%%%%

%%%%%%%%%%%%%%%%%%%%%%%%%%%%%%%%%%%%%%%%%%%%%%%%%%%%%%%%%%%%%%%%%%%%%%%%%

\subsection{Proofs of the Lemmata in the proof of Theorem~\ref{thm:parallel_multi}}\label{appendix:1}

Let $\gamma$ denote a closed (piecewise) $C^1$-path in the plane and let $\operatorname{tr}\gamma$ denote its trace. Then, 
\begin{align*}
 \operatorname{ind}_{\gamma}(z) := \tfrac{1}{2\pi i} \int_{\gamma} \frac{1}{\xi-z} \di{\xi}\,,
\quad z\in \C\setminus\operatorname{tr}{\gamma}
\end{align*}
denotes the winding number of $z$ with respect to $\gamma$. A closed polygon $\tau=[p_1\,p_2\,\cdots\, p_k\, p_1]$ 
composed of finitely many horizontal or vertical segments $[p_1\,p_2]$,$[p_2\,p_3]$,..., $[p_k\,p_1]$ is called a \emph{grid polygon} if 
there exists a not necessarily regular grid $G \subset \C$ of horizontal or vertical lines such that all 
vertices $p_1,...,p_k$ are pairwise distinct adjacent grid point of $G$. As shown in \cite[\S~4.2 in Chapter~12]{remmert2013classical}\footnote{We note that in
\cite{remmert2013classical} the more general term step polygon is used instead of grid polygon. But the proof is actually only given for grid polygon.} 
every grid polygon divides the complex plane into exactly two disjoint domains
\begin{align}\label{eq:jordan-grid-polygon}
 \C \setminus \operatorname{tr}\gamma = \operatorname{ext}\gamma \cup \operatorname{int}\gamma \,,
\end{align}
with
\begin{align*}
 \operatorname{ext} \gamma := \{ z\in \C\setminus \operatorname{tr}\gamma \, | \,  \operatorname{ind}_{\gamma}(z) =  0\}
\end{align*}
and
\begin{align*}
 \operatorname{int} \gamma := \{ z\in \C\setminus\operatorname{tr}\gamma\, | \,  \operatorname{ind}_{\gamma}(z) =  1\}
\quad \text{ or } \quad  
\operatorname{int} \gamma := \{ z\in \C\setminus\operatorname{tr}\gamma\, | \,  \operatorname{ind}_{\gamma}(z) =  -1\}
\end{align*}
depending on the orientation of $\gamma$. Moreover, as remarked in \cite[\S~4.2 in Chapter~12]{remmert2013classical} one can show that for every grid polygon $\gamma$ one has
$\overline{\operatorname{int}\gamma} = \operatorname{int}\gamma \cup \operatorname{tr}\gamma$.

\begin{theorem}[Circuit Theorem]\label{thm:Circuit}
Let $K$ be a compact subset of the non-empty open set $\Omega \subset \C$.
\begin{enumerate}
 \item[(a)] Then, for every connected subset $K_0 \subset K$ there is a grid polygon $\tau$ in 
$\Omega\setminus K$ such that $\operatorname{ind}_{\tau}(K_0)=1$. 
 \item[(b)] If $K$ is additionally non-separating and connected and if $\C\setminus \Omega$ has 
only finitely many bounded connected components, then there is a grid polygon $\tau$ in $\Omega\setminus K$
such that $\operatorname{ind}_{\tau}(K)=1$ and $\overline{\operatorname{int} \tau} \subset \Omega$.
 \end{enumerate}
\end{theorem}
\begin{proof}
Part~(a) is shown in \cite[\S~4.2 in Chapter~12]{remmert2013classical}.

(b) For simplicity, we treat only the case of one bounded connected component because all arguments easily 
extent to finitely many bounded connected components. Thus, let $V$ denote the bounded connected component
of $\C \setminus \Omega$. As $V$ is compact there is square $Q \subset \C$ that properly contains $K$ and $V$. 
Now choose $v \in V$ and $q \in \partial Q$. Since $K$ does not separate the plane there is path $\gamma$ 
connecting $v$ and $q$ such that $\operatorname{tr}\gamma \cap K = \emptyset$. Then let $\gamma_\infty$ denote any
continuous continuation of $\gamma$ which connects $q$ with $\infty$ (for instance, one can choose
a straight half line parallel to the real or imaginary axis depending on the location of $q$ on $\partial Q$). 
Then, the application of part~(a) to $\Omega_0 = \Omega \setminus \gamma_\infty$ yields a grid polygon 
$\tau$ in $\Omega_0 \setminus K$  with $\operatorname{ind}_{\tau}(K)=1$. 

Finally, we have to show $\operatorname{int}\tau \subset \Omega$. To this end we consider any 
$ z \not \in \Omega$. Then $z$ lies in an unbounded  connected component of $\C\setminus \Omega_0$ because
any unbounded connected component of $\C\setminus \Omega$ belongs to an unbounded connected component of
$\C\setminus \Omega_0$ and $V$ is by construction part of an unbounded connected component of $\C\setminus \Omega_0$. 
Therefore, we conclude $\operatorname{ind}_{\tau}(z)=0$ and thus $z \not\in \operatorname{int}\tau$.
Hence it follows $\operatorname{int} \tau \subset \Omega$ and, as $\tau$ itself is also contained 
in $\Omega$, we obtain $\overline{\operatorname{int} \tau} \subset \Omega$.
\end{proof}

\begin{lemma}\label{lem:union-not-sep}
Let $C_1,...,C_N$ be finitely many, pairwise disjoint, compact and connected subsets in $\C$. 
Then the union $C_1 \cup \cdots \cup C_N$ is non-separating, i.e. $\C\setminus (C_1 \cup \cdots \cup C_N)$
is connected, if and only if each $C_i$ is non-separating for $i = 1, \dots, N$.
\end{lemma}
\begin{proof}
``$\Longrightarrow$'': Assume without loss of generality~that $C_1$ is separating. Then it is easy to show that 
$C_1 \cup \cdots \cup C_N$ is also separating. Note that the connected components of 
$\C \setminus C_1$ are open and hence due to the disjointness condition the union of 
$C_2, \dots, C_N$ cannot cover any connected component of $\C \setminus C_1$.

\medskip
``$\Longleftarrow$'':
First we show the case $N=2$. The general case follows by induction. To this end, we pick points $p$ and 
$p'$ in $\C\setminus (C_1 \cup C_2)$ and show that there is a path $\gamma$ in $\C\setminus (C_1 \cup C_2)$
connecting them. Since $C_1$ and $C_2$ are compact there is a square $Q$ that properly contains $C_1$ and $C_2$. 
Obviously, is suffices to show that one can connect $p$ with an arbitrary (but fixed) $q$ on the boundary
of $Q$ without intersecting $C_1$ and $C_2$. Since $\C \setminus C_1$ is connected there is a path 
$\gamma_{pq}$ connecting $p$ and $q$ such that $\operatorname{tr}\gamma_{pq} \cap C_1 = \emptyset$.
If $\operatorname{tr}\gamma_{pq} \cap C_2 = \emptyset$ holds we are done.
Therefore, we consider the case 
\begin{align*}
\operatorname{tr}\gamma_{pq} \cap C_2 \neq \emptyset\,.
\end{align*}
Our goal is to modify the path $\gamma_{pq}$ to a path, say $ \tau_{pq}$, such that 
$\operatorname{tr}\tau_{pq} \cap C_2 = \emptyset $. For this, we apply part~(b) of the Circuit Theorem~\ref{thm:Circuit} to the compact connected set $C_2$ lying in $Q \setminus (C_1 \cup \{p\})$  and conclude the existence of a closed grid polygon $\tau_{C_2}$ in $Q \setminus (C_1 \cup \{p\})$ such that 
\begin{align*}
\overline{\operatorname{int} \tau}_{C_2} \subset Q \setminus (C_1 \cup \{p\})\,.
\end{align*}
Next define $I:=\{t \in [0,1] \, |\, \gamma_{pq}(t) \in \overline{\operatorname{int} \tau}_{C_2}\}$ and let 
$t_- := \min I$ and $t_+ := \max I$. Then, the path $\tau_{pq}$ is obtained by joining $p$ and 
$\gamma_{pq}(t_-)$ along $\gamma_{pq}$, and $\gamma_{pq}(t_-)$ and $\gamma_{pq}(t_+)$ along $\tau_{C_2}$, 
and $\gamma^1_{pq}(t_+)$ and $q$ along again $\gamma_{pq}$. 

The case $N > 2$ can easily be handled by induction. By assumption, the union of $C_1, \dots, C_{N-1}$
is non-separating and thus we can find path $\gamma_{pq}$ which connects $p$ and $q$ without intersecting
$C_1 \cup \cdots \cup C_{N-1}$. If $\operatorname{tr}\gamma_{pq} \cap C_N \neq \emptyset$ we can proceed as above to
obtain a  modified  path $\tau_{pq}$ which connects $p$ and $q$ without intersecting 
$C_1 \cup \cdots \cup C_{N}$.
\end{proof}

\begin{lemma}\label{lem:extension}
Let $C_1,...,C_N$ be finitely many, pairwise disjoint, compact, connected and non-separating subsets
in  $\C$. Then, there are pairwise disjoint, compact, connected sets $K_1,...,K_N$ such that each 
$C_i$ is properly contained in $K_i$ for $i=1,\dots,N$ and the union of $K_1,...,K_N$ does not separate 
the plane.
\end{lemma}
\begin{proof}
First we show the claim for $N=2$. By applying part~(b) of the Circuit Theorem \ref{thm:Circuit} to
$\Omega := \C\setminus C_2$ and $K:=C_1$ and get a grid polygon $\tau_1$ such that $\operatorname{int}\tau_1$
properly contains $C_1$ and satisfies $\overline{\operatorname{int}\tau_1 } \cap C_2 = \emptyset$. Thus,
we define $K_1:=\overline{\operatorname{int}\tau_1}$ and apply again part~(b) of the Circuit Theorem
\ref{thm:Circuit} to $\Omega := \C \setminus K_1$ and $K:=C_2$. This yields a grid polygon $\tau_2$
such that $C_2 \subset \operatorname{int}(\tau_2)$ and 
$\overline{\operatorname{int}\tau_2 } \cap K_1 = \emptyset$. Then setting
$K_2:=\overline{\operatorname{int}\tau_2}$ and taking \eqref{eq:jordan-grid-polygon} into account it
follows that $K_1$ and $K_2$ do not separate the plane and therefore Lemma~\ref{lem:union-not-sep}
shows that $K_1\cup K_2$ does not separate the plane.

\medskip

For $N>2$ we can repeat the above construction with the obvious modification that in the $k$-th step 
one has to apply part~(b) of the Circuit Theorem \ref{thm:Circuit} to the sets
$\Omega := \C \setminus (K_1 \cup \cdots \cup K_{k-1} \cup C_{k+1} \cup \cdots \cup C_{N}) $ and $K:=C_k$.

\end{proof}

\medskip
The next result is trivial, but given that this defines a building block in our construction methods 
for ensemble reachability we state it separately for future reference. 

\begin{lemma}\label{lem:runge-extension}
Let $K_1$ and $K_2$ be disjoint compact non-separating sets {with finitely many components}. Then, the function
$h\colon K_1\cup K_2 \to \C$ defined by $h(z)=1$ for all $z\in K_1$ and $h(z)=0$ for all $z \in K_2$
can be uniformly approximated by polynomials.
\end{lemma}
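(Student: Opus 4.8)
The plan is to recognize that this statement is a direct application of a polynomial approximation theorem once the hypotheses are assembled correctly. The function $h$ is locally constant, so the only possible obstacle to polynomial approximation is topological, and that obstacle is removed precisely by the non-separating assumption.

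First I would exploit disjointness and compactness: since $K_1$ and $K_2$ are disjoint compact sets, their distance $d := \operatorname{dist}(K_1,K_2)$ is strictly positive. Hence the open sets $U_i := \{z \in \C \mid \operatorname{dist}(z,K_i) < d/3\}$, $i=1,2$, are disjoint open neighbourhoods of $K_1$ and $K_2$. I would then extend $h$ to the function $\tilde h \colon U_1 \cup U_2 \to \C$ given by $\tilde h \equiv 1$ on $U_1$ and $\tilde h \equiv 0$ on $U_2$. Being locally constant on the open set $U_1 \cup U_2$, the extension $\tilde h$ is holomorphic there, and $U_1 \cup U_2$ is an open neighbourhood of the compact set $K := K_1 \cup K_2$.

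Next I would invoke the non-separating hypothesis: by assumption $\C \setminus (K_1 \cup K_2)$ is connected, i.e.\ the compact set $K$ has connected complement. This is exactly the topological condition needed to upgrade approximation by rational functions to approximation by polynomials. Applying Runge's Theorem \cite[Theorem~13.9]{rudin1987realcomplex} to $\tilde h$, which is holomorphic on the neighbourhood $U_1 \cup U_2$ of $K$, yields for every $\e>0$ a polynomial $p$ with $\sup_{z \in K}|p(z) - \tilde h(z)| < \e$; since $\tilde h = h$ on $K$, this is the desired uniform approximation of $h$.

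There is no genuine difficulty here beyond bookkeeping; the one thing worth stressing is the complementary roles of the two hypotheses. Disjointness together with compactness provides the positive separation that makes $h$ the restriction of a genuinely holomorphic function on a neighbourhood, whereas the non-separating condition supplies the connected complement without which only rational (not polynomial) approximation would be guaranteed. As an alternative I could bypass the neighbourhood extension and apply Mergelyan's Theorem \cite[Theorem~20.5]{rudin1987realcomplex} directly to $h$ on $K$, using that $h$ is continuous on $K$ and holomorphic on $\operatorname{int} K$; I would nonetheless prefer the Runge route, since the required holomorphic neighbourhood is immediate in the present situation.
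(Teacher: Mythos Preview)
Your proposal is correct and follows essentially the same route as the paper: both construct disjoint open neighbourhoods of $K_1$ and $K_2$, observe that the locally constant extension of $h$ is holomorphic there, and then invoke Runge's Theorem using that $\C\setminus(K_1\cup K_2)$ is connected. The only difference is cosmetic: the paper obtains the neighbourhoods by pointing back to the construction in Lemma~\ref{lem:extension}, whereas you build them directly from the positive distance $d=\operatorname{dist}(K_1,K_2)$.
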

\begin{proof}
Choose arbitrary open disjoint neighborhoods $U$ and $V$ of $K_1$ and $K_2$, respectively, such that the function $h$ is analytic on $U \cup V$. Then,
the assertion follows from Runge's Approximation Theorem, cf. \cite[Theorem~13.7]{rudin1987realcomplex}.
\end{proof}

\section*{Acknowledgment}\label{sec:acknowledgment}
%%%%%%%%%%%%%%%%%%%%%%%%%%%%%%%%%%%%%%%%%%%%%%%%%%%%%%%%%%%%%%%%%%%%%%%%%
This research was supported by the German Research Foundation (DFG) within the grants HE 1858/14-1 and SCHO 1780/1-1. We also thank the referees for their helpful comments and valuable suggestions.

% %%%%%%%%%%%%%%%%%%%%%%%%%%%%%%%%%%% Final Refs %%%%%%%%%%%%%%%%%%%%%%%%%%%%

% %%%%%%%%%%%%%%%%%%%%%%%%%%%%%%%%%%%%%%%%%%%%%%%%%%%%%%%%%%%%%%%%%%%%%%%%%%%%
% \bibliographystyle{model1b-num-names}
% \bibliography{schoenlein_books,ensemble_project}
% %%%%%%%%%%%%%%%%%%%%%%%%%%%%%%%%%%%%%%%%%%%%%%%%%%%%%%%%%%%%%%%%%%%%%%%%%%%%
% 

\end{document}